\newcommand{\PP}{\mathbb P}
\newcommand{\kk}{K}
\newcommand{\CC}{{\mathbb C}}
\newcommand{\QQ}{{\mathbb Q}}
\newcommand{\RR}{{\mathbb R}}
\newcommand{\KK}{{K}}
\DeclareMathOperator{\sing}{\operatorname{sing}}
\newcommand{\sss}{{\mathfrak b}}
\newcommand{\ddd}{{\mathfrak c}}
\numberwithin{equation}{section}
\begin{document}

\title{Counting lines on surfaces, especially quintics} 

\author{S\l awomir Rams}
\address{Institute of Mathematics, Jagiellonian University, 
ul. {\L}ojasiewicza 6,  30-348 Krak\'ow, Poland}
\email{slawomir.rams@uj.edu.pl}

\author{Matthias Sch\"utt}
\address{Institut f\"ur Algebraische Geometrie, Leibniz Universit\"at
  Hannover, Welfengarten 1, 30167 Hannover, Germany}

    \address{Riemann Center for Geometry and Physics, Leibniz Universit\"at
  Hannover, Appelstrasse 2, 30167 Hannover, Germany}

\email{schuett@math.uni-hannover.de}



\date{September 4, 2018}
\thanks{Research partially supported by the National Science Centre, Poland, Opus  grant 
no.\ 2017/25/B/ST1/00853
(S.\ Rams) and by 
 ERC StG~279723 (SURFARI) (M.\ Sch\"utt)}
\subjclass[2010]
{Primary: {14J25};  Secondary {14J70}}
\keywords{algebraic surface, quintic surface, line, fibration, flecnodal divisor}

\begin{abstract}
We introduce certain rational functions on a smooth projective surface $X\subset\PP^3$
which facilitate counting the lines on $X$.
We apply this to smooth quintics in characteristic zero to prove that they contain no more than 127 lines,
and that any given line meets at most 28 others. We construct examples which  demonstrate that the latter bound is sharp. 
\end{abstract}

\maketitle

\newcommand{\XXd}{X_{d}}
\newcommand{\XXf}{X_{4}}
\newcommand{\XXp}{X_{5}}
\newcommand{\mF}{\mathcal F}
\newcommand{\mL}{\mathcal L}
\newcommand{\mR}{\mathcal R}
\newcommand{\Ruledeight}{S_{11}}
\newcommand{\Ruledfour}{S_{4}}
\newcommand{\DivisorRest}{\mathfrak Rest}
\newcommand{\Pl}{\Pi}
\newcommand{\reg}{\operatorname{reg}}

\theoremstyle{remark}
\newtheorem{obs}{Observation}[section]
\newtheorem{rem}[obs]{Remark}
\newtheorem{example}[obs]{Example}
\newtheorem{conv}[obs]{Convention}
\theoremstyle{definition}
\newtheorem{Definition}[obs]{Definition}
\theoremstyle{plain}
\newtheorem{prop}[obs]{Proposition}
\newtheorem{theo}[obs]{Theorem}
\newtheorem{lemm}[obs]{Lemma}
\newtheorem*{conj}{Conjecture}
\newtheorem{claim}[obs]{Claim}
\newtheorem{Fact}[obs]{Fact}
\newtheorem{cor}[obs]{Corollary}
\newtheorem{assumption}[obs]{Assumption}

\newcommand{\ux}{\underline{x}}
\newcommand{\ud}{\underline{d}}
\newcommand{\ue}{\underline{e}}
\newcommand{\mmS}{{\mathcal S}}
\newcommand{\mmP}{{\mathcal P}}
\newcommand{\nlines}{\mbox{\texttt l}(\XXp)}
\newcommand{\ii}{\operatorname{i}}

\newcommand{\nonlinflec}{{\mathcal D}}
\newcommand{\linflec}{{\mathcal L}}
\newcommand{\flec}{{\mathcal F}}
\newcommand{\filename}{wlt-2018-09-04-final.tex}


\section{Introduction} \label{sect-introduction}

Recently, there has been considerable progress in the 
understanding of configurations of lines on smooth quartic surfaces in $\PP^3$.
If the ground field $\kk$ has $\mbox{char}(\kk) \neq 2,3$, 
the maximal number of lines on a smooth $\XXf$ is $64$, and a line can be met by at most $20$ other lines 
(see \cite{Segre}, \cite{RS}). For $\mbox{char}(\kk) = 3$ the maximal number is $112$ 
and a line meets at most $30$ other lines (\cite{rs-2014}).
The cases $\mbox{char}(\kk)=2$ as well as $\kk = \RR$, or even $\kk=\QQ$ 
were studied in \cite{D-2016}, \cite{dis-2015}, \cite{rs-2}.
In fact, the former two papers deal more generally, and to great extent,
with smooth quartic surfaces with many lines, not just the maximum. 
Most of this progress was made possible by the fact that smooth quartics are K3 surfaces,
 and lines endow them with elliptic fibrations (or genus one fibrations). 
 Moreover, examples can be constructed with the help of  lattice-theoretical  techniques based on the Torelli theorem.   

In contrast, for smooth surfaces $X_d \subset \PP^3$ of degree $d \geq 5$ very little seems to be known,
even if $\kk=\CC$. 
Although the question what is the maximal number of lines appears in various places in the modern literature
(see e.g. \cite{miyaoka-classical}, \cite{miyaoka}, \cite{eisenbud-harris-3264}),
the best known bound essentially stems from work of Salmon (see \cite{salmon2}, \cite{clebsch}) 
and has since then been only slightly improved by Segre (\cite{Segre}):
\begin{eqnarray}
\label{eq:M(d)}
M(d) \leq d(11d-28)+12
\end{eqnarray}
where $M(d)$ is the maximum number of lines on a smooth degree-$d$ surface (at least over $\CC$). 
In comparison, the Fermat surface of degree $d$ contains exactly $3d^2$ lines 
(over any algebraically closed field of characteristic zero or 
exceeding the degree $d$)
and  there are examples with more lines 
known only in degrees $6$, $8$, $12$, $20$ (see \cite{sarti}). 

Even less seems to be known about the maximum \emph{valency} $v(\ell)$ of a line $\ell$ where
\[
v(\ell) = \#\{\text{lines} \; \ell'\subset X_d; \;\; \ell'.\ell=1\}
\]
and $X_d$ is again assumed to be smooth. 
By \cite{Segre} if $\ell$ is a line of the first kind (to be defined below, see Def.~\ref{def:2nd}),  
$v(\ell)$ cannot exceed $(8d-14)$ (see Proposition \ref{prop-sfk}), but this  bound does not generalize to the so-called lines of the second kind (cf.~\cite{RS}, \cite{ramsdisjoint}).

In the case of quartics, the proof of the sharp bound $v(\ell) \leq 20$ for $\ell \subset \XXf$ in   \cite{RS} 
(outside characteristics $2,3$)
is based on the study of the elliptic fibration given by the pencil of cubics residual
to $\ell$ in $\XXf$. Already for  $\ell \subset \XXp$, i.e.~on quintics,
this approach leads to a genus $3$-fibration, that is much more difficult to control. 
Instead, for a line $\ell \subset \XXd$
we will define certain rational functions $\sss_0$, $\ldots$, $\sss_{d-3}$ on the line $\ell$ 
(Def.~\ref{def-bf}) which form the main novelty of this paper. 
The key feature of these functions, especially on the explicit and computational side,
is that their common zeroes encode the points where $\ell$ is met by other lines on $\XXd$
(Prop.~\ref{lemm-fundamental}).

A line of the second kind  on a  quartic surface is always met by at least twelve other lines (see \cite{Segre}, \cite{rs-advgeom}),
whereas one can construct examples of quintic surfaces that contain only one line and the latter is of the second kind. 
In this note we give another  geometric interpretation of the  notion of a line  
of the second kind (see  Prop~\ref{lemm-lines-second-kind}) that explains 
why special properties of lines of the second kind on quartics do not carry over
to lines on  surfaces of degree $d \geq 5$. 
As an application, we sketch an elementary proof of  the valency bound  for lines on quartics
from \cite{RS} in Example~\ref{example-familyZ}.

After those general preparations  we shall focus on the case $d=5$,
i.e.~of quintics $X_5\subset\PP^3$. 
Our first main result concerns the valency on smooth quintics:

\begin{theo}
\label{thm1}
Let $\kk$ be a field of characteristic zero.
If $X_5\subset\PP^3_\kk$ is a smooth quintic containing a line $\ell$,
then
\begin{equation} \label{eq-28} 
v(\ell) \leq 28 \, .
\end{equation}
\end{theo}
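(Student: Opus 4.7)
The plan is to exploit the rational functions $\sss_0, \sss_1, \sss_2$ on $\ell \cong \PP^1$ introduced in Definition~\ref{def-bf}, together with Proposition~\ref{lemm-fundamental}, which identifies the points of $\ell$ met by other lines of $X_5$ with the common zeros of the $\sss_i$ (counted with appropriate multiplicity). Since any $p \in \ell$ lies on at most $d-1 = 4$ further lines of $X_5$ (these together with $\ell$ must lie in the plane curve $T_p X_5 \cap X_5$ of degree $5$, so at most $4$ lines are residual to $\ell$), any bound on the number of common zeros of the $\sss_i$ on $\PP^1$, counted with multiplicity, translates into a bound on $v(\ell)$.

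The proof naturally splits into two cases. If $\ell$ is of the first kind, Proposition~\ref{prop-sfk} gives $v(\ell) \leq 8d - 14 = 26 \leq 28$ for free, so this case requires no further work.

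The substantive case is that $\ell$ is of the second kind. Here the first-kind argument breaks down because one or more of the $\sss_i$ may vanish identically along $\ell$, precisely the degeneration quantified by Proposition~\ref{lemm-lines-second-kind}. I would proceed by cases according to which subset of $\{\sss_0, \sss_1, \sss_2\}$ vanishes identically. In each case the surviving $\sss_i$ cut out effective divisors on $\PP^1$ of known degrees, and the size of their common zero locus can be controlled by degree-theoretic (B\'ezout-type) arguments on $\PP^1$. Summing the local contributions, capped point-by-point by the bound $d-1 = 4$ on the number of lines through a given $p$, should deliver the estimate $v(\ell) \leq 28$.

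The hardest step will be the multiplicity bookkeeping in the second-kind case. To reach exactly $28$ rather than a larger number, one cannot pay the full factor $4$ at every common zero; instead, the order of vanishing of the $\sss_i$ at each $p$ must be matched carefully with the actual number of lines of $X_5$ through $p$, combined with global degree information on $\PP^1$. A plausible key ingredient is the flecnodal divisor on $\ell$, which already enters the control of the $\sss_i$ and records the osculating behaviour that makes $\ell$ of the second kind; the numerics will likely close only after this refined input is used to sharpen the per-point contribution.
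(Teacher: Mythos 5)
Your overall architecture—dispose of lines of the first kind via Proposition~\ref{prop-sfk}, then control lines of the second kind through the vanishing behaviour of $\sss_0,\sss_1,\sss_2$ and Proposition~\ref{lemm-fundamental}—matches the paper's strategy for part of the argument, but there are two concrete gaps. First, a case split only according to which of the $\sss_i$ vanish identically is not enough: the degrees of the numerators $\tilde\sss_1,\tilde\sss_2$ in \eqref{eq-fraction-decomp} are not universal constants but depend on the pole structure of the $\sss_j$ along $\ell$, which is governed by the ramification divisor $R_\ell$ of \eqref{eq-map-deg-d} and by which fibers contain lines. The paper therefore has to split by ramification type, explicitly parametrize the family of quintics carrying a second-kind line of each type in \eqref{eq-rrl-five} (Section~\ref{sect-sec-kind-quintics}), and only then read off bounds such as $\deg(\tilde\sss_1)\leq 17$ or $\deg(\tilde\sss_2)\leq 28$; crucially, these degrees \emph{drop} exactly when ramified fibers acquire lines, and it is this trade-off (via Lemma~\ref{lem-1line} and the tangent-cone Remarks~\ref{rem-triple-points}, \ref{rem-triple-point}, \ref{rem-double-point}, not the flecnodal divisor, which enters only in Theorem~\ref{thm2}) that closes the count at $28$. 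Your per-point cap of $d-1=4$ lines is also too generous at simple ramification points, where Lemma~\ref{lem-1line} allows only one extra line; without that refinement the bookkeeping does not reach $28$.

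Second, and more seriously, two ramification types, $(1^6)$ and $(2,1^4)$, resist the explicit parametrization on which the $\sss$-function degree bounds rest, and the paper handles them by an entirely different mechanism: Segre's surface of principal lines $\Ruledeight$, a ruled surface of degree at most eleven containing every line that meets $\ell$, from which tangent planes at ramification points split off, leaving a low-degree residual surface to which B\'ezout applies (Corollaries~\ref{cor-type111111} and \ref{cor-type21111}). The type $(2,1^4)$ is one of the configurations where $v(\ell)=28$ can actually occur, so it cannot be absorbed into a generic degree estimate; your proposal has no tool to address these two cases.
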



More precisely, the maximal value can possibly be attained only for lines of  three certain ramification types
(Cor~\ref{cor-28abc}). 
For one of these configurations,
all surfaces  containing it  can be exhibited explicitly using our methods (Example~\ref{example-3squared-28}), 
so the bound from Theorem \ref{thm1} is indeed sharp. 
In particular, this implies that a quintic surface with a fivetuplet  of coplanar lines can contain at most $125$ lines 
(Corollary \ref{cor-125}). 
Combined with a technique using the flecnodal divisor 
and some basic topological and intersection-theoretical arguments, this leads to the following bound
for the maximum number of lines on $X_5$ which is our second main result:

\begin{theo}
\label{thm2}
Let $\kk$ be a field of characteristic zero.
Any smooth quintic $X_5\subset\PP^3_\kk$ may contain at most $127$ lines.
In other words,
$$
M(5)\leq 127.
$$
\end{theo}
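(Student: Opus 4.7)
The plan is to split cases based on whether $X_5$ contains five coplanar lines. In the affirmative case, Corollary~\ref{cor-125} immediately bounds the total number of lines by $125 \leq 127$. Hence it suffices to treat the complementary case where no plane meets $X_5$ in five lines; moreover, a plane containing exactly four lines on $X_5$ would leave a residual plane curve of degree $5-4=1$ with no linear components, a contradiction. So every plane meets $X_5$ in at most three lines on $X_5$, a combinatorial restriction that will feed into the global estimate below.

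The core of the argument combines the pointwise bound of Theorem~\ref{thm1} with the flecnodal divisor technique. On $X_5$, the flecnodal curve $\flec$ has class $(11d-24)H = 31\,H$, so $\flec \cdot H = 155$. Every line $\ell \subset X_5$ is a component of $\flec$, appearing with multiplicity one if $\ell$ is of the first kind and with multiplicity at least two if $\ell$ is of the second kind (in the spirit of the analysis underlying Proposition~\ref{lemm-lines-second-kind}). Decomposing $\flec = \linflec + \nonlinflec$ into the linear part $\linflec = \sum_i m_i \ell_i$ and the non-linear (effective) residual $\nonlinflec \geq 0$, one obtains
$$
N_I + 2 N_{II} + \deg \nonlinflec \;=\; 155,
$$
where $N_I, N_{II}$ are the numbers of lines of first and second kind and $N = N_I + N_{II}$.

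To sharpen the coarse bound $N \leq 155$ to the target $N \leq 127$, I intersect $\nonlinflec$ with the individual lines and with the total line divisor $D = \sum_i \ell_i$. Using $K_{X_5} = H$, $\ell_i^2 = -3$, $\ell_i \cdot H = 1$, $H^2 = 5$, the identity $D^2 = -3N + \sum_i v(\ell_i)$, and Hodge-index on $D$ (giving $D^2 \leq N^2/5$) yields one quadratic inequality. The projection formula applied to $\nonlinflec \cdot D = 31 N - \linflec \cdot D$ together with the pointwise bound $v(\ell) \leq 28$ from Theorem~\ref{thm1} gives a complementary linear inequality. The combinatorial input from Case~1 (at most three lines per plane, so each intersection point of lines on $X_5$ is traversed by at most three lines) controls the local behaviour of $\nonlinflec$ at concurrent points, which is the precise ingredient needed to close the estimate at $N \leq 127$.

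The main technical obstacle is the careful bookkeeping of multiplicities in $\flec$ along both kinds of lines and the quantification of how much $\nonlinflec$ vanishes at trisecant points (versus ordinary bisecant intersections) of the line configuration. Bridging the $20$-unit gap from Segre's classical bound of $147$ down to the sharp $127$ relies on each of these three ingredients—the pointwise valency bound of Theorem~\ref{thm1}, the flecnodal degree identity, and the local/Hodge-index intersection calculus on $X_5$—being combined with the right constants; here Corollary~\ref{cor-28abc} (restricting the ramification types at which $v(\ell)=28$ can occur) is useful to rule out near-maximal global configurations.
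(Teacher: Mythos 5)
Your first reduction is correct and matches the paper: Corollary~\ref{cor-125} disposes of quintics with five coplanar lines, and the flecnodal divisor of degree $31H$ with $\flec\cdot H=155$ is indeed the right global object. But the mechanism you propose for closing the gap from $155$ down to $127$ does not work, and it is not the paper's mechanism. First, the decisive numerical input in the paper is \emph{not} the bound $v(\ell)\leq 28$ of Theorem~\ref{thm1} but the much stronger bound $v(\ell)\leq 25$ of Lemma~\ref{lemma-colines}, valid for every line once no hyperplane splits into five lines; this is proved by an Euler-number computation for the genus-$3$ fibration \eqref{eq:fibration-deg-d}, an ingredient entirely absent from your sketch. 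The difference matters: writing $\mF'=\mF-\sum_i\ell_i$, one has $(\mF')^2\leq 5\cdot 31^2-(65M-\sum_i v(\ell_i))$, which with $v\leq 25$ and $M\geq 128$ gives $(\mF')^2\leq-315$, while with only $v\leq 28$ one gets $(\mF')^2\leq 4805-37\cdot128=69$, which is not even negative, so no contradiction with adjunction can be extracted. Second, your Hodge-index inequality $D^2\leq N^2/5$ is an \emph{upper} bound on $D^2=\sum_i v(\ell_i)-3N$; combined with upper bounds on the valencies it yields no upper bound on $N$ at all, so the "quadratic inequality" does not exist in the form you need. Third, the claim that lines of the second kind occur with multiplicity at least two in $\flec$ is neither proved by you nor used (nor asserted) in the paper.

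Even granting the correct estimate $(\mF')^2\leq -315$, the argument is far from a two-inequality bookkeeping exercise. Since adjunction gives $C^2\geq -2-\deg(C)$ for each irreducible curve, a reduced $\mF'$ of degree at most $27$ satisfies $(\mF')^2\geq -81$; the contradiction therefore forces high-multiplicity components, and Lemmas~\ref{lem:multi}--\ref{lem:conic} pin these down to an irreducible conic of multiplicity at least $6$ or a twisted cubic of multiplicity at least $7$. Ruling these out requires the auxiliary ruled surfaces $Z_C$ of degrees $22$ and $33$ (Propositions~\ref{prop:Z} and~\ref{prop:twisted_cubic}), the bound of $88$ lines meeting a conic, the multiplicity statement of Proposition~\ref{prop:double} for tangent or bisecant lines of a twisted cubic, and a case-by-case analysis over the residual divisor. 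Your proposed local control of $\nonlinflec$ at trisecant points plays no role in any of this and, as stated, is not a workable substitute. In short: right opening move and right global object, but the quantitative core of the proof is missing and the specific inequalities you name cannot supply it.
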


We have to admit that we do not expect Theorem \ref{thm2} to be sharp, or even close to it.
In fact, the current record for the number of lines on a smooth quintic surface (outside characteristic $2$, $5$) 
stands at $75$, attained both by the Fermat quintic and Barth's quintic (\cite{ramsschuett}).
Yet the given bound provides a substantial improvement compared to \eqref{eq:M(d)}.


The organization of the paper is as follows.  At first we recall Segre's argument for lines of the first kind
(see Sect.~\ref{sect-ram-type}).
In Sect.~\ref{sect-segre-functions} we define the rational functions $\sss_0$, $\ldots$, $\sss_{d-3}$
 for a line on a smooth degree-$d$ surface 
and  describe their relevant properties. In the subsequent three sections we study lines of the second kind on quintics to 
complete the proof of  
Theorem \ref{thm1}. 
Finally, in Sections~\ref{bounding-lines-quintics}, \ref{s:pf} we investigate quintics without fivetuplets of coplanar lines
in order to derive Theorem \ref{thm2}.


\begin{conv}
\label{conv}
Since the statements of this paper remain valid under base extension (or restriction),
we work over an algebraically closed field $\kk$ of characteristic 
zero.
\end{conv}

\subsection*{Acknowledgement}
We thank the anonymous referee for helpful comments.


\section{Segre's bound for the lines of the first kind} \label{sect-ram-type}

Let $d \geq 4$ and let $\XXd \subset \PP^3$  be a smooth degree-$d$ surface that contains a line  $\ell$ (i.e. a degree-one curve). 
The linear 
system $|{\mathcal O}_{\XXd}(1) - \ell|$ endows the surface in question with a  
fibration, i.e.~a morphism
\begin{equation} \label{eq:fibration-deg-d}
\pi \, : \,  \XXd \to \PP^1
\end{equation}
whose fibers are plane curves of degree $d-1$.
We 
let $F_P$  denote   the fiber of \eqref{eq:fibration-deg-d}
contained in the tangent space $\operatorname{T}_{P} \XXd$  for a point $P \in \ell$.

The restriction of the fibration \eqref{eq:fibration-deg-d} to the line $\ell$ defines 
the degree-$(d-1)$ map 
\begin{equation} \label{eq-map-deg-d}
\pi|_\ell \,: \; \ell \rightarrow \PP^1.
\end{equation}
Let $R_{\ell}$ be the ramification divisor of \eqref{eq-map-deg-d}.
By the Hurwitz formula, one has 
$$
\deg(R_{\ell})= 2d - 4 \, .
$$
The line $\ell$ is said to be \emph{of ramification type }  $(1^{2d-4})$
if and only if all ramification points are simple,
i.e.~no ramification point occurs in $R_{\ell}$ with multiplicity greater than one.
The lines of other  ramification types are defined in an analogous way;
e.g.  type $(2,1^{2d-6})$ means that only one double point appears in  $R_{\ell}$.
We highlight that
in the case of $d \geq 5$ a phenomenon occurs that was impossible for quartics:
two ramification points of the map \eqref{eq-map-deg-d} may belong to the same fiber 
(see  \ref{example-double-point} for an explicit example).
To simplify our notation we say that the  line $\ell$  is of ramification type 
$$
(1^{2d-6},[1,1])
$$
if it is of the type  $(1^{2d-4})$ and exactly two ramification points of index $1$  lie in the same fiber of \eqref{eq-map-deg-d}. 
Moreover, a line  $\ell' \neq \ell$, $\ell' \subset \XXd$ is called \emph{$\ell$-unramified}
if and only if it meets $\ell$ away from the set of ramification  points of the map \eqref{eq-map-deg-d}.

The general fiber of \eqref{eq:fibration-deg-d} is a smooth planar curve with $3(d-1)(d-3)$ inflection points.
In his work on lines on surfaces Segre introduces the following notion 
(cf. \cite[p.~87]{Segre}): 

\begin{Definition}[Lines of Second Kind]
\label{def:2nd}
We call the line  $\ell$ a line of the second kind if and only if 
it is contained in the closure  of the flex locus of the smooth fibers of the fibration
\eqref{eq:fibration-deg-d}. Otherwise,   $\ell$ is called a line of the first kind.
\end{Definition}

Obviously each line $\ell' \neq \ell$ on $\XXd$ that meets $\ell$ is a component of a fiber of \eqref{eq:fibration-deg-d}. In particular, 
it meets $\ell$ in a point where both the equation of the degree-$(d-1)$ curve (= the fiber of the fibration \eqref{eq:fibration-deg-d}) and its hessian vanish.
The resultant of the restrictions of both polynomials to the line $\ell$ is of degree $(8d-14)$ 
in the homogeneous coordinates of $\ell$ (see \cite[p.~88]{Segre}, \cite[Lemma~5.2]{RS}).
After verifying that multiple lines meeting $\ell$ in the same point result in a multiple zero of the resultant,
this yields the following bound for the valency:

\begin{prop}[Segre]  \label{prop-sfk}
If $\ell \subset \XXd$ is a line of the first kind, then it is met by at most $(8d-14)$ other lines on $\XXd$:
\begin{eqnarray}
\label{eq:1st}
v(\ell)\leq 8d-14.
\end{eqnarray}
\end{prop}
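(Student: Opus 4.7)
The plan is to translate the condition ``$\ell'$ is a line component of the residual fiber'' into the simultaneous vanishing of the fiber equation and its Hessian, and then to pack the whole first-kind constraint into a single polynomial on $\ell$ of degree $8d-14$. Fix coordinates so that $\ell = \{x_2 = x_3 = 0\}$ and expand
\begin{equation*}
F(\underline{x}) \;=\; \sum_{i+j \geq 1} F_{ij}(x_0, x_1)\, x_2^i x_3^j, \qquad \deg F_{ij} = d - i - j,
\end{equation*}
with $F_{00} \equiv 0$ since $\ell \subset \XXd$. Parametrize the pencil of planes through $\ell$ by $(x_2, x_3) = (as, bs)$ with $[a:b] \in \PP^1$; the residual degree-$(d-1)$ curve in each such plane has equation
\begin{equation*}
\tilde F(x_0, x_1, s;\, a, b) \;=\; \sum_{k \geq 0} s^k G_k, \qquad G_k \;=\; \sum_{i+j = k+1} F_{ij}(x_0, x_1)\, a^i b^j,
\end{equation*}
and $G_0 = a F_{10} + b F_{01}$ recovers the fibration, in that $[a:b] = \pi(P)$ for $P \in \ell$ iff $[a:b] = [F_{01}(P):-F_{10}(P)]$.

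Any line $\ell' \neq \ell$ of $\XXd$ meeting $\ell$ at $P$ lies in $T_P \XXd$ and is therefore a line component of $F_P$; so its defining linear form divides $\tilde F|_{[a:b] = \pi(P)}$, which forces both $\tilde F$ and its Hessian $\Hess_{(x_0, x_1, s)}(\tilde F)$ to vanish along $\ell'$, and in particular at $P$. I would package this into the polynomial
\begin{equation*}
R(x_0, x_1) \;:=\; \Hess_{(x_0, x_1, s)}(\tilde F)\Big|_{s=0,\; (a,b) = (F_{01}(x_0, x_1),\, -F_{10}(x_0, x_1))}
\end{equation*}
on $\ell$. A direct inspection of the $3\times 3$ Hessian matrix at $s=0$ shows that its entries have degree $d-3$ in $(x_0, x_1)$ and $(a,b)$-degrees arranged as $(1,1,2;\,1,1,2;\,2,2,3)$, so that every term in the determinant's expansion has total $(a,b)$-degree exactly $5$; substituting $a,b$ by forms of degree $d-1$ then yields $\deg R \leq 3(d-3) + 5(d-1) = 8d - 14$, and by construction $R(P) = 0$ whenever $P \in \ell$ is met by a line $\ell' \neq \ell$ of $\XXd$.

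The remaining work is to verify (i) $R \not\equiv 0$ and (ii) that several lines coalescing at a single point $P$ produce a zero of $R$ at $P$ of correspondingly high multiplicity. For (i), vanishing of $R$ identically on $\ell$ would say that the Hessian of the (generically smooth) fiber $F_P$ vanishes at $P$ for every $P \in \ell$, i.e.\ $\ell$ lies in the closure of the flex locus of the smooth fibers of $\pi$, contradicting the hypothesis that $\ell$ is of the first kind. For (ii), if $k$ distinct lines of $\XXd$ meet $\ell$ at $P$, then $\tilde F|_{[a:b]=\pi(P)}$ factors into $k$ distinct linear forms through $P$ times a residual of degree $d-1-k$, and a Leibniz-type computation of the Hessian of such a product shows that $R$ vanishes at $P$ to order at least $k$; summing over all $P$ yields $v(\ell) \leq \deg R \leq 8d - 14$. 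The main obstacle is the multiplicity bookkeeping in (ii); the other steps are routine degree arithmetic together with the flex characterization of line components.
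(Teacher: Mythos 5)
Your argument is essentially Segre's proof as reproduced in the paper: you restrict the fiber equation and its Hessian to $\ell$ and eliminate the pencil parameter (your substitution $(a,b)=(F_{01},-F_{10})$ is exactly the resultant with the linear form $G_0$, which is well defined everywhere on $\ell$ by smoothness of $X_d$), arrive at the same degree count $3(d-3)+5(d-1)=8d-14$, use the same non-vanishing criterion via the flex-locus characterization of lines of the first kind, and defer the same multiplicity bookkeeping that the paper also only cites from Segre. The proposal is correct and takes the same route as the paper.
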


For quintics, for instance, the bound reads $v\leq 26$ which is a little better than what we stated in Theorem \ref{thm1}.
Indeed, 
examples of smooth degree-$d$ surfaces that contain a line met by $(d(d-2)+2)$ other lines
(thus eventually violating \eqref{eq:1st}) can be found in \cite{ramsdisjoint}, whereas 
quartic surfaces that violate the above bound are studied in \cite{RS}.
It is due to those examples that in this note we will mostly deal with lines of the second kind;
in particular, this will be necessary and sufficient to complete the proof of Theorem \ref{thm1}.

\begin{rem} \label{rem-skcheck-d}
Assume that the ideal of a degree-$d$ surface $X_d\subset\PP^3$ is generated by 
\begin{equation} \label{eq-expansion-d}
f =  \sum_{i,j=0}^{d} \alpha_{i,j} \cdot  x_3^i x_4^j = \sum_{i,j,k,l=0}^{d} \alpha_{i,j,k,l} \cdot  x_3^i x_4^j x_1^k x_2^l \in \kk[x_1, x_2, x_3, x_4] \, , 
\end{equation}
where  $\ell = V(x_3, x_4)$ is a line of the second kind,
$\alpha_{i,j}\in \kk[x_1,x_2]$ is homogeneous of degree $(d-i-j)$, and $\alpha_{i,j,k,l}\in\kk$. 
The degree-$(d-1)$ curve  residual to $\ell$ in the intersection $X_d \cap V(x_4 - \lambda \cdot x_3)$ is given 
by the polynomial 
\begin{equation} \label{eq-flambda-barthfunction}
f_{\lambda}(x_1, x_2, x_3)  := f(x_1, x_2, x_3, \lambda x_3)/x_3
\end{equation}  
We put  $h_{\lambda}(x_1, x_2, x_3)$ to denote  the determinant of the  hessian of $f_{\lambda}$. Let
$r \in \kk[x_1,\lambda]$ be the remainder of division of   $h_{\lambda}(x_1,1,0)$ by $f_{\lambda}(x_1,1,0)$. 
We consider the expansion
\begin{equation} \label{eq-reminder-d}
r = \sum_{i,j} r_{i,j} x_1^i \lambda^j 
\end{equation}
By definition, we have that
\begin{equation} \label{eq-rem2-3-condition}
\ell \mbox{ is of the second kind iff all }  
r_{i,j}  \in \kk[\alpha_{i,j,k,l} ]\mbox{ vanish identically.}  
\end{equation}
\end{rem}


\section{Counting lines with rational functions} \label{sect-segre-functions}

In this section we introduce the main new tool of this note --
the rational functions $\sss_k$ (see Def.~\ref{def-bf}).
The definition is preceded by some elementary lemmata 
which we need in order to show that the functions we introduce are in fact well-defined.

Let $\XXd \subset \PP^3$ 
be  a smooth  surface of degree $d$ that contains a line $\ell$.
To simplify our notation we assume as in Remark \ref{rem-skcheck-d} that  $\ell = V(x_3, x_4)$  
and let $f$  denote a generator of the ideal $\mbox{I}(X_d)$ with expansion \eqref{eq-expansion-d}. 

We define 
$$
\mathfrak{d}_j(f) := \begin{vmatrix} \frac{\partial^2 f}{\partial x_j \partial x_3} &  \frac{\partial^2 f}{\partial x_j \partial x_4}  \\ 
\frac{\partial f}{\partial x_3} & \frac{\partial f}{\partial x_4} \end{vmatrix}  \, .
$$

Let $P \in \ell$  be a point such that 
$\textstyle{\frac{\partial f}{\partial x_4}}(P) \neq 0$. Since the tangent space $T_P X_d$ can be parametrized by the map 
\begin{eqnarray} \label{parametrizing-tangent-space}
\PP^2 \;\;\;\;\;  & \to &  \;\;\;\;\;\;\; T_PX_d \\
(x_1:x_2:x_3) & \mapsto & (x_1:x_2:x_3:C(P) \cdot x_3)
\;\; \mbox{ where } C(P):= - \textstyle{\frac{\partial f/\partial x_3 (P)}{\partial f/\partial x_4(P)}},\nonumber
\end{eqnarray}
the fiber $F_P$ 
is given by the vanishing of $f_{C(P)}$ (i.e.~\eqref{eq-flambda-barthfunction} with $\lambda = C(P)$). 
One can easily check that for $j=1,2$ 
the following equality holds
\begin{equation} \label{eq-inerpretdd2}
\textstyle{\frac{\partial f}{\partial x_4}}(P) \cdot \frac{\partial f_{C(P)}}{\partial x_j}|_{\ell} = 
\textstyle{\frac{\partial^2 f}{\partial x_j \partial x_3}}|_{\ell} \cdot \textstyle{\frac{\partial f}{\partial x_4}}(P) -
\textstyle{\frac{\partial^2 f}{\partial x_j \partial x_4}}|_{\ell} \cdot \textstyle{\frac{\partial f}{\partial x_3}}(P).
\end{equation}
Note that the right-hand side gives $\mathfrak d_j(f)(P)$ upon substituting $P$.
As an immediate consequence we obtain the following lemma
(which has an analogous statement for $\mathfrak d_1(f)$ as the proof reflects).

\begin{lemm} \label{lem-d2vanish}
Suppose that $P := (1:p_2:0:0) \in \ell$  is a point such that 
$\frac{\partial f}{\partial x_4}(P) \neq 0$. 
Then 
the following conditions are equivalent:
\begin{itemize}
\item  $\mathfrak{d}_2(f)(P) = 0$,
\item $P$ is a ramification point of the map \eqref{eq-map-deg-d}.
\end{itemize}
\end{lemm}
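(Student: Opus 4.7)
The plan is to identify $\pi|_\ell$ explicitly in affine coordinates and then apply the quotient rule. First I would use $t := x_2/x_1$ as affine parameter on $\ell$ around $P$ and $\lambda$ from Remark~\ref{rem-skcheck-d} as affine parameter on the base $\PP^1 = |{\mathcal O}_{X_d}(1)-\ell|$. Since $\ell \subset X_d$ forces $\partial f/\partial x_1$ and $\partial f/\partial x_2$ to vanish identically on $\ell$, the tangent plane $\operatorname{T}_Q X_d$ at every $Q = (1:t:0:0) \in \ell$ is cut out by
\[
\textstyle{\frac{\partial f}{\partial x_3}}(Q)\cdot x_3 + \textstyle{\frac{\partial f}{\partial x_4}}(Q)\cdot x_4 = 0,
\]
so the unique member of the pencil tangent to $X_d$ at $Q$ corresponds to the parameter $\lambda = C(Q)$. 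Thus $\pi|_\ell$ is, in these charts, the map $t \mapsto C(1:t:0:0)$; the hypothesis $(\partial f/\partial x_4)(P) \neq 0$ ensures regularity at $P$.

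Next, since $d/dt$ along $\ell$ coincides with $\partial/\partial x_2$ restricted to $\ell$, the quotient rule yields
\[
\frac{dC}{dt}(P) \;=\; -\frac{\mathfrak d_2(f)(P)}{\bigl((\partial f/\partial x_4)(P)\bigr)^2};
\]
alternatively, the same identity can be extracted directly from \eqref{eq-inerpretdd2} upon substituting $P$ into the right-hand side and observing that the resulting factor $(\partial f_{C(P)}/\partial x_2)|_\ell(P)$ is proportional to $dC/dt(P)$. Because $\ell$ and the target $\PP^1$ are smooth curves, $P$ is a ramification point of $\pi|_\ell$ precisely when $dC/dt(P) = 0$; the hypothesis $(\partial f/\partial x_4)(P) \neq 0$ then turns this into the desired equivalence $\mathfrak d_2(f)(P) = 0 \Leftrightarrow P \in R_\ell$.

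There is no serious obstacle here: the statement reduces to the intrinsic identification $\pi|_\ell = C$ in the chosen chart, followed by an elementary quotient-rule computation. The parallel statement for $\mathfrak d_1(f)$ that the lemma alludes to is proved identically after exchanging the roles of $x_1$ and $x_2$ and using the complementary affine chart $\{x_2 \neq 0\}$ on $\ell$.
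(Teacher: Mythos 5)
Your proof is correct, and it takes a genuinely different (and slightly more self-contained) route than the paper's. You identify $\pi|_\ell$ explicitly in a chart as the rational function $t\mapsto C(1:t:0:0)=-\tfrac{(\partial f/\partial x_3)}{(\partial f/\partial x_4)}(1:t:0:0)$ and then invoke the derivative criterion for ramification of a map of smooth curves: the quotient rule exhibits $\mathfrak d_2(f)(P)$ as precisely the numerator of $dC/dt$ at $P$, the denominator $\bigl((\partial f/\partial x_4)(P)\bigr)^2$ being nonzero by hypothesis. The paper instead never writes $\pi|_\ell$ as a rational function; it uses the intersection-multiplicity characterization of ramification ($P\in R_\ell$ iff $\ell$ is tangent to the residual fiber $F_P$ at $P$, i.e.\ both $\partial f_{C(P)}/\partial x_1$ and $\partial f_{C(P)}/\partial x_2$ vanish at $P$), reads off the equivalence of $\mathfrak d_2(f)(P)=0$ with the vanishing of the $x_2$-partial from the identity \eqref{eq-inerpretdd2}, and uses the Euler relation (together with $f_{C(P)}(P)=0$) to see that the $x_2$-partial alone suffices. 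The substance overlaps — both arguments ultimately rest on differentiating the pair $(\partial f/\partial x_3,\partial f/\partial x_4)$ along $\ell$, as your own aside about extracting $dC/dt$ from \eqref{eq-inerpretdd2} shows — but your framing makes transparent why exactly this $2\times 2$ determinant appears (it is the derivative of the covering map up to a nonzero square), whereas the paper's stays within the tangency picture of $F_P$ that it reuses in Lemma \ref{lem-smoothcase} and Proposition \ref{lemm-fundamental}. Your concluding remark on $\mathfrak d_1(f)$ and the complementary chart matches the paper's parenthetical claim.
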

\begin{proof}
The point $P$ is a ramification point of the map \eqref{eq-map-deg-d} if and only if the line $\ell$ is tangent to the fiber $F_P$ in the point $P$, i.e.
\begin{equation} \label{eq-bothpartials}
\textstyle{\frac{\partial f_{C(P)}}{\partial x_1}} (P) = \textstyle{\frac{\partial f_{C(P)}}{\partial x_2}} (P) = 0 \, . 
\end{equation}

Suppose   $\mathfrak{d}_2(f)(P) = 0$.  From \eqref{eq-inerpretdd2}  we obtain $\frac{\partial f_{C(P)}}{\partial x_2}(P) = 0$. 
The Euler identity yields
$\frac{\partial f_{C(P)}}{\partial x_1}(P) = 0$.
 
Assume  $P$ is a ramification point of the map \eqref{eq-map-deg-d}. 
From \eqref{eq-inerpretdd2} and \eqref{eq-bothpartials} 
we obtain  $\mathfrak{d}_2(f)(P) = 0$.  
\end{proof}

\begin{lemm} \label{lem-smoothcase}
Let $X_d \subset \PP^3$ be a smooth surface that contains the line $\ell = V(x_3, x_4)$ and let $\operatorname{I}(X_d) =  \langle f \rangle$.
Then $f$ satisfies the following conditions
\begin{equation} \label{eq-nonvanishingpartial}
\textstyle{\frac{\partial f}{\partial x_4}} \notin \operatorname{I}(\ell) \quad \mbox{ and }  \quad \mathfrak{d}_2(f) \notin  \operatorname{I}(\ell) \,  .
\end{equation}
\end{lemm}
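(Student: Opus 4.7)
The plan is to treat the two non-vanishing assertions in turn, in each case translating ``$\cdot \notin \operatorname{I}(\ell)$'' into ``the polynomial restricts non-trivially to $\ell$'' and then contradicting either the smoothness of $X_d$ along $\ell$ or the finiteness of the ramification locus of \eqref{eq-map-deg-d}.

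For $\textstyle{\frac{\partial f}{\partial x_4}} \notin \operatorname{I}(\ell)$ I would start from the factorization $f = x_3\, g + x_4\, h$ (valid since $f$ vanishes on $\ell$), where $g,h\in \kk[x_1,x_2,x_3,x_4]$ are homogeneous of degree $d-1$. A direct computation yields $\textstyle{\frac{\partial f}{\partial x_3}}|_\ell = g|_\ell$ and $\textstyle{\frac{\partial f}{\partial x_4}}|_\ell = h|_\ell$, while $\textstyle{\frac{\partial f}{\partial x_i}}|_\ell = 0$ for $i=1,2$ because every monomial of $f$ is divisible by $x_3$ or $x_4$. Smoothness of $X_d$ at every point of $\ell$ therefore forces $g|_\ell$ and $h|_\ell$ to have no common zero on $\ell\cong \PP^1_{[x_1:x_2]}$. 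If $h|_\ell\equiv 0$, then $g|_\ell$ would be a nowhere-vanishing homogeneous polynomial of positive degree $d-1$ in two variables over the algebraically closed field $\kk$, which is impossible.

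For $\mathfrak{d}_2(f)\notin \operatorname{I}(\ell)$ I would argue by contradiction. Assume $\mathfrak{d}_2(f)|_\ell\equiv 0$. By the first part, $\textstyle{\frac{\partial f}{\partial x_4}}|_\ell$ is a nonzero homogeneous polynomial on $\ell$, so its vanishing locus is finite. At each of the infinitely many remaining points $P\in\ell$, Lemma~\ref{lem-d2vanish} applies and identifies $P$ as a ramification point of the degree-$(d-1)$ map \eqref{eq-map-deg-d}. This contradicts the Hurwitz bound $\deg R_\ell = 2d-4$.

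I do not expect any serious obstacle. The only small bookkeeping point is that the factorization $f = x_3 g + x_4 h$ is not unique, but $g|_\ell$ and $h|_\ell$ are (they are precisely the two conormal derivatives of $f$ along $\ell$); with that noted, both parts are essentially immediate from smoothness in the first case, and from Lemma~\ref{lem-d2vanish} combined with Hurwitz in the second.
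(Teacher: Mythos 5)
Your proposal is correct and follows essentially the same route as the paper: the decomposition $f = x_3 g + x_4 h$ plus smoothness along $\ell$ for the first assertion (your direct computation of the gradient along $\ell$ as $(0,0,g|_\ell,h|_\ell)$ is a slightly cleaner phrasing of the same argument), and Lemma~\ref{lem-d2vanish} combined with the finiteness of the ramification locus for the second.
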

\begin{proof} Suppose to the contrary that the formal partial derivative $\frac{\partial f}{\partial x_4}$ 
belongs to the ideal $\mbox{I}(\ell)$. 
Since $f \in \mbox{I}(\ell)$, we can write
$$
f = x_3 \cdot g(x_1, x_2, x_3, x_4) + x_4 \cdot h(x_1, x_2, x_4)
$$
Thus, we have $\frac{\partial f}{\partial x_4} \in \mbox{I}(\ell)$ if and only if
 $h(x_1, x_2, x_4) \in \mbox{I}(\ell)$. The latter implies that 
the partials $\frac{\partial f}{\partial x_1}$,  $\frac{\partial f}{\partial x_2}$,  $\frac{\partial f}{\partial x_4}$ vanish along $\ell$.
Hence, the zeroes of $g$ on $\ell$ are singularities of $X_d$. That contradiction shows that $\frac{\partial f}{\partial x_4}$ cannot belong  to the ideal of the line $\ell$. 

Suppose that $\mathfrak{d}_2(f) \in  \mbox{I}(\ell)$.                  
 By Lemma~\ref{lem-d2vanish} every point of the line $\ell$
where $\frac{\partial f}{\partial x_4}$ does not vanish  is a ramification point of the map \eqref{eq-map-deg-d}. Contradiction.
\end{proof}

After those preparations we can define   the main tool of this note (recall that the monomial  $x_3^i x_4^j x_1^k x_2^l$ appears  in  \eqref{eq-expansion-d} with the coefficient  $\alpha_{i,j,k,l}$).

\begin{Definition} \label{def-bf}
Let $\ell = V(x_3,x_4)$ be  a line on a smooth surface $\XXd \subset \PP^3$ 
and let   $f$ be a generator of the ideal $\operatorname{I}(X_d)$ given as \eqref{eq-expansion-d}.
For $k=0, \ldots, d-3$
we define the 
$k$-th function $\sss_k := \sss_k(f) \in \kk(x_1, x_2)$ 
as the rational function given 
by the formula
\begin{equation*} \label{eq-segre-function}
\sum_{\substack{j_1, j_2, j_3 \geq 0 \\  j_1 + j_2 \leq k+3\\j_3 \leq d - (k+3)}} 
 {d - (j_1+j_2+j_3) \choose k+3 - (j_1 + j_2)} \alpha_{j_1,j_2,j_3,d-(j_1+j_2+j_3)} A^{d-(k+3) - j_3} B^{(k+3)-(j_1+j_2)} C^{j_2}  
\end{equation*}
where 
\begin{equation*}  \label{eq-ABC}
 A := - \textstyle{\frac{\mathfrak{d}_1(f)}{\mathfrak{d}_2(f)}}|_{\ell} ,  \quad  B := \frac{(\partial f/\partial x_3) \cdot \mathfrak{d}_4(f) - (\partial f/\partial x_4) \cdot  \mathfrak{d}_3(f)}{2 \, \mathfrak{d}_2(f) \cdot (\partial f/\partial x_4)}|_{\ell} ,  \quad 
C:= - \textstyle{\frac{\partial f/\partial x_3}{\partial f/\partial x_4}}|_{\ell} \, .
\end{equation*}
\end{Definition}

Originally, we introduced the functions $\sss_0$, $\ldots$, $\sss_{d-3}$ to find quartic surfaces with interesting configurations of lines. 
Indeed, for an explicitly given surface, the $\sss$-functions will provide a simple tool to check whether a line is met by many lines
(see Proposition \ref{lemm-fundamental} and Example \ref{example-familyZ}).  
For quintic surfaces, the
number of possible ramification types will still be small enough to successfully apply a similar approach. 
This will be crucial for the proof of Theorem \ref{thm1}.

The main feature of the rational functions $\sss_k$ will be presented in Proposition \ref{lemm-fundamental}.
Before we get there, we discuss some basic properties.

\begin{rem} \label{rem-coord}
Given a pair $(\XXd, \ell)$ where $\ell \subset \XXd$ is a line, the functions $\sss_k$ do depend on the choice 
of four ordered linear forms $h_1, h_2, h_3, h_4$ 
that constitute a system of homogenous coordinates on $\PP^3$ such that 
$\langle h_3, h_4 \rangle = \operatorname{I}(\ell) $. 
Indeed, one can easily see that  some poles of $\sss_k(f)$ move when we change the homogenous coordinates. Still, as we are about to show,  both the set of common zeroes of 
$\sss_0(f)$, $\ldots$, $\sss_{d-3}(f)$ and the vanishing of $\sss_0(f)$, $\ldots$, $\sss_{j}(f)$ along the line $\ell$ 
have purely geometric meanings
for the pair $(\XXd, \ell)$. We prefer to define $\sss_k$ for fixed homogenous coordinates  $(x_1, x_2, x_3, x_4)$ and a polynomial $f \in \langle x_3, x_4 \rangle$ 
to keep our exposition concise. 
\end{rem}

\begin{rem}
In the  definition of the functions  $\sss_k$ we assumed the surface $\XXd$ to be {\sl smooth}. 
Obviously,   one can define the functions $\sss_k$ 
as soon as
there exists a system of homogenous coordinates $(x_1, x_2, x_3, x_4)$ on $\PP^3$ 
such that the pair $(\XXd,\ell)$ satisfies the conditions  \eqref{eq-nonvanishingpartial}.
This can be used in study of configurations of lines on singular surfaces (cf.~\cite{veniani-phd}).
\end{rem}

For later use, we continue with 
another interpretation of the polynomials $\mathfrak{d}_1(f), \hdots, \mathfrak{d}_4(f)$.
For a point $P \in X_d:=V(f)$, we follow \cite{rs-2014} to define the 
\emph{Hessian quadric} $V_P := V_P X_d$  
as the quadric in $\PP^3$ given by the quadratic form  
\begin{eqnarray}
\label{eq:q_P}
q_P = \frac 12
(x_1, x_2, x_3, x_4) \Big(   \frac{\partial^2 f}{\partial x_i \partial x_j}(P) \Big)_{1 \leq i,j \leq 4} \phantom{l}^t(x_1, x_2, x_3, x_4).
\end{eqnarray}
Suppose that $P \in \ell$ is  a point such that $\textstyle{\frac{\partial f}{\partial x_4}(P)} \neq 0$. 
As one can easily check,
 the pre-image of  the intersection of the quadric $V_P$  with the tangent space $T_P X_d$ under the parametrization 
\eqref{parametrizing-tangent-space}
is the quadric given  by the $(3 \times 3)$-matrix
\begin{equation} \label{eq-resquadric}
\begin{pmatrix} 
0 & 0  & \mathfrak{d}_1(f)(P)   \\ 
0  & 0  & \mathfrak{d}_2(f)(P)     \\ 
\mathfrak{d}_1(f)(P)  & \mathfrak{d}_2(f)(P)  &  (-B(P)) \mathfrak{d}_2(f)(P)     \\
  \end{pmatrix} 
\end{equation}
In particular, we obtain the following useful observation:

\begin{obs} \label{obs-deg-hessian}
If $P \in \ell$ is a point such  that 
$
\textstyle{\frac{\partial f}{\partial x_4}}(P) \neq 0 \mbox{ and } \mathfrak{d}_2(f)(P) \neq 0
$ 
then the  Hessian quadric $V_P \XXd$ does not contain the tangent space $T_P X_d$.
\end{obs}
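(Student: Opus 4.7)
The plan is to read off the conclusion directly from the matrix \eqref{eq-resquadric}, which the authors computed in the paragraph just before the observation. The condition that the tangent plane $T_P X_d$ be contained in the Hessian quadric $V_P$ is equivalent to the restriction $q_P|_{T_P X_d}$ being identically zero as a quadratic form, which (under any linear isomorphism) amounts to the associated symmetric matrix being the zero matrix.

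First I would invoke the hypothesis $\frac{\partial f}{\partial x_4}(P)\neq 0$ to ensure that the parametrization \eqref{parametrizing-tangent-space} is indeed a linear isomorphism $\PP^2 \xrightarrow{\sim} T_P X_d$, so that the containment $T_P X_d \subset V_P$ is equivalent to the pullback quadratic form vanishing identically on $\PP^2$. Next I would observe that this pullback is exactly the form with matrix \eqref{eq-resquadric} — this is the content of the computation immediately preceding the observation, and does not need to be redone.

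Finally I would conclude: the $(2,3)$-entry (equivalently, the $(3,2)$-entry) of \eqref{eq-resquadric} equals $\mathfrak{d}_2(f)(P)$, which is nonzero by the second hypothesis. Hence the matrix is not the zero matrix, the pullback quadratic form is not identically zero, and therefore $T_P X_d$ is not contained in $V_P X_d$.

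There is really no obstacle here: the observation is a clean corollary of the explicit form \eqref{eq-resquadric}, serving as a geometric rephrasing of the non-vanishing hypothesis $\mathfrak{d}_2(f)(P)\neq 0$. The only point requiring any care is to note that the two standing hypotheses are exactly what is needed to make the reduction to the matrix \eqref{eq-resquadric} legitimate (the first to get the parametrization, the second to get a nonzero entry), after which the implication is immediate.
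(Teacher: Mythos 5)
Your proof is correct and matches the paper's (implicit) argument exactly: the observation is stated as an immediate consequence of the matrix \eqref{eq-resquadric}, whose off-diagonal entry $\mathfrak{d}_2(f)(P)\neq 0$ shows the restricted quadratic form is nonzero. Your care in noting that $\frac{\partial f}{\partial x_4}(P)\neq 0$ is what legitimizes the parametrization is a welcome explicit touch, but the route is the same.
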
 

%

Finally we can state the proposition  that justifies our interest in the rational functions $\sss_0(f)$, $\ldots$, $\sss_{d-3}(f)$   (and the way we defined them).

\begin{prop} \label{lemm-fundamental} 
Let $X_d \subset \PP^3$ be a smooth surface that contains the line $\ell = V(x_3, x_4)$ and let $I(X_d) =  \langle f \rangle$.
Suppose that $P := (1:p_2:0:0)$  is a point such that 
$$
\textstyle{\frac{\partial f}{\partial x_4}}(P) \neq 0 \;\;  \mbox{ and } \;\;  \mathfrak{d}_2(f)(P) \neq 0.
$$ 
Then there is at most one line  $\ell' \neq \ell$ such that $P \in \ell' \subset X_d$,
and the following conditions are equivalent:
\begin{enumerate}
\item[(a)]  there exists a line $\ell' \neq \ell$ such that $P \in \ell' \subset \XXd$,
\item[(b)] all  functions $\sss_j$ vanish at $P$, i.e. 
$$
\sss_0(P) = \ldots = \sss_{d-3}(P) = 0 \, .
$$
\end{enumerate}
\end{prop}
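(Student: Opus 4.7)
The plan is to reduce the existence of a line $\ell'\ne\ell$ through $P$ in $X_d$ to a polynomial identity in one variable whose coefficients of order $\geq 3$ are precisely $\sss_0(P),\dots,\sss_{d-3}(P)$, once the tangency condition at $P$ has been used to fix the direction of the candidate line.

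Any line $\ell'\ne\ell$ through $P\in X_d$ is contained in $T_PX_d$ and therefore is a component of the plane section $X_d\cap T_PX_d = \ell + F_P$; since $\ell'\ne\ell$, it is a linear component of $F_P$. By Lemma~\ref{lem-d2vanish}, $\mathfrak{d}_2(f)(P)\neq 0$ implies that $P$ is not a ramification point of \eqref{eq-map-deg-d}, so $\ell$ meets $F_P$ transversally at $P$; in particular $F_P$ is smooth at $P$, which forces $\mathrm{mult}_P F_P = 1$ and hence uniqueness of $\ell'$.

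Using the affine chart on $T_PX_d$ from \eqref{parametrizing-tangent-space}, every line through $P$ in $T_PX_d$ distinct from $\ell$ has the form
\[
\ell_b:\; t \mapsto (1 : p_2 + tb : t : C(P)\,t),\qquad b\in\kk,
\]
and $\ell_b\subset X_d$ is equivalent to $g_b(t):=f(1, p_2 + tb, t, C(P)\,t)\equiv 0$ in $\kk[t]$. The coefficients of $t^0$ and $t^1$ vanish automatically (because $P\in X_d$, by the definition of $C(P)$, and since $\partial f/\partial x_1, \partial f/\partial x_2\in\operatorname{I}(\ell)$). Using $\partial^2 f/\partial x_2^2\in\operatorname{I}(\ell)$ together with \eqref{eq-inerpretdd2} and the computation $\partial_3 f_{C(P)}(P) = -B(P)\cdot\partial_2 f_{C(P)}(P)$ (which follows from the definition of $B$), a direct expansion shows that the coefficient of $t^2$ in $g_b$ equals $(b-B(P))\,\mathfrak{d}_2(f)(P)/(\partial f/\partial x_4)(P)$, and thus vanishes precisely when $b = B(P)$. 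The unique candidate for $\ell'$ is therefore $\ell_{B(P)}$.

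For the remaining coefficients, I would first verify the identity $A(P) = p_2$: Euler's identity applied to $f_{C(P)}$ at $P$ yields $\partial_1 f_{C(P)}(P) = -p_2\,\partial_2 f_{C(P)}(P)$, which combined with \eqref{eq-inerpretdd2} gives $\mathfrak{d}_1(f)(P) = -p_2\,\mathfrak{d}_2(f)(P)$. Expanding
\[
g_{B(P)}(t) = \sum_{i,j,k,l}\alpha_{i,j,k,l}\, C(P)^{j}\,(A(P) + tB(P))^{l}\, t^{i+j}
\]
via the binomial theorem and reading off the coefficient of $t^{k+3}$, the change of indices $(j_1,j_2,j_3) = (i,j,k)$ with $m = k+3-(j_1+j_2)$ matches $\binom{l}{m}$ with $\binom{d-(j_1+j_2+j_3)}{k+3-(j_1+j_2)}$ and recovers exactly the expression for $\sss_k(P)$ from Definition~\ref{def-bf}. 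Hence the vanishing of all coefficients of $g_{B(P)}$ of order $\geq 3$ is equivalent to $\sss_0(P)=\dots=\sss_{d-3}(P)=0$, yielding the equivalence (a)$\Leftrightarrow$(b). The main obstacle is the combinatorial bookkeeping in this last step; the crucial algebraic input making it work is the equality $A(P) = p_2$.
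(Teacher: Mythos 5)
Your proposal is correct and follows essentially the same route as the paper: the heart of both arguments is the identity expressing $f$ restricted to the line through $P$ with direction data $(A(P),B(P),C(P))$ as $t^3$ times a polynomial whose coefficients are exactly $\sss_0(P),\dots,\sss_{d-3}(P)$, together with the observation that non-ramification at $P$ (via Lemma~\ref{lem-d2vanish}) forces uniqueness. The only cosmetic difference is that you pin down the direction $b=B(P)$ by computing the $t^2$-coefficient directly, whereas the paper identifies the same line as the residual intersection of $T_PX_d$ with the Hessian quadric $V_P$ — these are the same computation, since that coefficient is precisely the restriction of $q_P$ to the line.
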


\begin{proof} 
We assumed the surface $X_d$ to be smooth, so if we have two lines  $\ell' \neq \ell''$ on $\XXd$ such
 that  $\ell',\ell'' \neq \ell$ and   $P \in \ell', \ell''$, 
then the lines in question are coplanar, so  $P$ is a ramification point of the map \eqref{eq-map-deg-d}. The latter is impossible by Lemma~\ref{lem-d2vanish}.

By Observation~\ref{obs-deg-hessian} the Hessian quadric  $V_P$ does not contain the tangent space $T_P X_d$, 
so the latter and $V_P$  meet along two lines, one of which is the line $\ell$.    
An elementary computation (see \eqref{parametrizing-tangent-space} and \eqref{eq-resquadric}) 
shows that  the other line 
can be parametrized by 
the map $\Phi := \Phi_P$ given as 
\begin{equation*} \label{eq-principal-line}
\PP^1 \ni (t_1: t_3) \mapsto ( t_1 : A(P) \cdot t_1 +  B(P) \cdot t_3 : t_3 : C(P) \cdot t_3) 
\end{equation*}
Observe that we have in particular $\Phi(1:0) = P$ (e.g. by \eqref{eq-resquadric}). 

The line $\Phi(\PP^1)$  meets $X_d$ with multiplicity at least $3$ in the point $P$, so the composition $(f \circ \Phi)$ has  a triple root in the point $(1:0)$. By direct check, one has
\begin{equation} \label{eq-fund-equality}
f \circ \Phi = t_3^3 \cdot \sum_{j=0}^{d-2} \, \sss_j(P) \cdot t_3^{j} \cdot t_1^{d-3-j}  
\end{equation}
so the line $\Phi(\PP^1)$  lies on $X_d$ if and only if {(b)}  holds. 
\end{proof}

In particular, since the surface $X_d$ contains only finitely many lines for any $d\geq 3$, 
Proposition \ref{lemm-fundamental} shows that the $\sss_k$-functions cannot all vanish identically:

\begin{obs} 
If $X_d$ is smooth and $d \geq 3$, then 
there is an integer $k \in \{0, \ldots, d-3\}$
such that
$\sss_k \not\equiv 0$.
\end{obs}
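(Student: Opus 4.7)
The proof will be by contradiction, leveraging Proposition \ref{lemm-fundamental} together with the classical finiteness of lines on a smooth surface of degree $d \geq 3$. The plan is as follows: assume that every $\sss_k$, $k = 0, \ldots, d-3$, vanishes identically as a rational function on $\ell$, and produce infinitely many lines on $X_d$.

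First, I would invoke Lemma~\ref{lem-smoothcase} to note that neither $\tfrac{\partial f}{\partial x_4}$ nor $\mathfrak{d}_2(f)$ lies in $\operatorname{I}(\ell)$. Their restrictions to $\ell$ are therefore nonzero polynomials in the affine parameter $p_2$ on $\ell \cong \PP^1$, and hence vanish only on a finite (possibly empty) subset $Z \subset \ell$. Every point $P \in \ell \setminus Z$ then satisfies the non-vanishing hypotheses of Proposition~\ref{lemm-fundamental}, i.e.\ $\tfrac{\partial f}{\partial x_4}(P) \neq 0$ and $\mathfrak{d}_2(f)(P) \neq 0$.

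Next, for each such $P$, condition (b) of Proposition~\ref{lemm-fundamental} holds by our assumption that all $\sss_k$ vanish identically; so the proposition gives a line $\ell_P \neq \ell$ on $X_d$ through $P$. For distinct $P, P' \in \ell \setminus Z$ the lines $\ell_P$ and $\ell_{P'}$ must be distinct: they each meet $\ell$ in a single point (being different from $\ell$), and those points of intersection $P$ and $P'$ are different by assumption. Consequently, $X_d$ contains an infinite family of lines parametrised by the cofinite subset $\ell \setminus Z$.

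This contradicts the standard fact that a smooth surface $X_d \subset \PP^3$ of degree $d \geq 3$ contains only finitely many lines. I expect no serious obstacle: the observation is essentially a direct corollary of Proposition~\ref{lemm-fundamental}, and the only ingredient beyond the results already developed in the paper is the classical finiteness statement, which one can either cite or, in the range $d \geq 4$, argue via the negative expected dimension of the Fano scheme together with smoothness of $X_d$.
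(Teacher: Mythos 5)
Your argument is exactly the paper's: the observation is stated there as an immediate consequence of Proposition~\ref{lemm-fundamental} together with the finiteness of lines on a smooth surface of degree $d\geq 3$, and your use of Lemma~\ref{lem-smoothcase} to guarantee a cofinite set of points where the proposition applies just makes the same reasoning explicit. The proposal is correct and matches the paper's proof.
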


The proof of Prop.~\ref{lemm-fundamental} enables us to reinterpret Segre's notion of the line of the second kind (Def. \ref{def:2nd}).  

\begin{prop} \label{lemm-lines-second-kind}
Let $\XXd \subset \PP^3$ be a smooth surface that contains a line $\ell$.  
The following conditions are equivalent:

\begin{enumerate}
\item[(a)]
 $\ell$ is  a line of the second kind,

\item[(b)]
 the function $\sss_0(f)$ vanishes along the line $\ell$. 
\end{enumerate}
\end{prop}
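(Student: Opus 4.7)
The proof proposal relies on a geometric identification of the auxiliary line $\Phi_P(\PP^1)$ from the proof of Proposition \ref{lemm-fundamental}. The plan is to show that for generic $P\in\ell$ this line coincides with the projective tangent line to the fiber $F_P$ at $P$, and then that $\sss_0(P)=0$ is precisely the condition for $P$ to be an inflection point of $F_P$.

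First I would carry out the geometric identification. For $P\in\ell$ general enough (avoiding the finitely many points where $\partial f/\partial x_4$ or $\mathfrak{d}_2(f)$ vanish, as well as the ramification points of \eqref{eq-map-deg-d}), the fiber $F_P$ is smooth at $P$, since $T_P X_d\cap X_d=\ell+F_P$ as divisors and $\ell$ appears with multiplicity one. The tangent cone at $P$ of $T_PX_d\cap X_d$ consists of the two asymptotic directions at $P$, one being $\ell$ itself and the other the tangent line to $F_P$ at $P$. Both asymptotic directions are cut out on $T_PX_d$ by the Hessian quadric $V_P$, so the residual line to $\ell$ in the intersection $T_PX_d\cap V_P$, which was identified in the proof of Proposition \ref{lemm-fundamental} with $\Phi_P(\PP^1)$, must be the tangent line to $F_P$ at $P$. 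This matches the explicit formula for $B$: a direct manipulation of the identity \eqref{eq-inerpretdd2} together with its analogue for $\partial f_{C(P)}/\partial x_3$ yields $B(P) = -\frac{\partial f_{C(P)}/\partial x_3 (P)}{\partial f_{C(P)}/\partial x_2 (P)}$, which is precisely the slope of the tangent line to $F_P$ at $P$ in the affine chart $x_1=1$ on $T_P X_d$.

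Next I would relate the vanishing of $\sss_0(P)$ to the flex condition. Because $\Phi_P(\PP^1)\subset T_PX_d$, the local intersection number of $\Phi_P(\PP^1)$ with $X_d$ at $P$ equals its intersection with the divisor $\ell+F_P$. For $P$ generic the two lines $\Phi_P(\PP^1)$ and $\ell$ are distinct and meet transversely at $P$, contributing $1$; the remaining contribution is $\mathrm{i}_P(\Phi_P(\PP^1),F_P)$, which by Step 1 is at least $2$ (tangency), with equality $3$ exactly when $P$ is a flex of $F_P$. On the other hand, the expansion \eqref{eq-fund-equality} shows that $\mathrm{i}_P(\Phi_P(\PP^1),X_d)\geq 4$ if and only if $\sss_0(P)=0$. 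Combining, $\sss_0(P)=0$ iff $P$ is an inflection point of the smooth fiber $F_P$.

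To conclude, I would note that $\ell$ is of the second kind iff for a Zariski dense subset of $P\in\ell$ the point $P$ is a flex of the smooth fiber $F_P$; by the equivalence of Step 2 this is the same as $\sss_0(P)=0$ for generic $P\in\ell$, which, since $\sss_0$ is a rational function on $\ell$, amounts to $\sss_0$ vanishing along $\ell$. The main obstacle is bookkeeping the generic hypotheses: the identification of $\Phi_P(\PP^1)$ with the tangent to $F_P$, the smoothness of $F_P$ at $P$, and the transversality of $\Phi_P(\PP^1)$ with $\ell$ at $P$ all hold only off a finite set on $\ell$, but since this exceptional set is finite it cannot change whether the rational function $\sss_0$ is identically zero on $\ell$, so the proof goes through cleanly.
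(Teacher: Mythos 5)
Your proposal is correct and follows essentially the same route as the paper: both identify the residual line $\Phi_P(\PP^1)$ in $T_PX_d\cap V_P$ as the tangent line to $F_P$ at a general point $P\in\ell$, and then read off from the expansion \eqref{eq-fund-equality} that the intersection multiplicity of this line with $X_d$ at $P$ jumps from $3$ to $\geq 4$ (equivalently, $P$ is a flex of $F_P$) exactly when $\sss_0(P)=0$. The only difference is cosmetic: you justify the tangency of $\Phi_P(\PP^1)$ to $F_P$ via the tangent cone of $\ell+F_P$ and the formula for $B$, where the paper simply asserts it, and your phrase ``with equality $3$'' should read ``$\geq 3$'' to account for hyperflexes, which does not affect the argument.
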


\begin{rem}
Observe that the claim of Prop.~\ref{lemm-lines-second-kind} does not depend on the choice of homogenous coordinates on $\PP^3$
(cf.  Remark~\ref{rem-coord}).
\end{rem}

\begin{proof} We maintain the notation of the proof of Prop.~\ref{lemm-fundamental}. 

Let us assume that the map \eqref{eq-map-deg-d} is not ramified at $P$.
 By definition, we have  $\Phi_P(\PP^1) \neq \ell$ and the line $\Phi_P(\PP^1)$
is tangent to $F_p$ in the point $P$. 
The point $P$ is an inflection point of the fiber in question if and only if
 $(f \circ \Phi_P)$ has a zero of multiplicity $\geq 4$ in the point $(1:0)$. By \eqref{eq-fund-equality} the latter means that 
  $\sss_0(P) = 0$. This completes the proof of the proposition. 
\end{proof}

Obviously, for a fixed line $\ell \subset X_d$ we can choose the homogenous coordinates $(x_1, x_2, x_3, x_4)$ in such a way 
that the hyperplane section $V(x_3) \cap \XXd$, say,
contains  no lines and no lines on $\XXd$ run through 
the intersection point $V(x_1) \cap \ell$ (which is missing from the description in Prop.~\ref{lemm-fundamental}). 
Once we make such a choice, by Prop.~\ref{lemm-fundamental}, the number of 
$\ell$-unramified lines on $\XXd$ cannot exceed the minimum of the degrees of the
numerators of the non-zero functions $\sss_k(f)$, where $k =0, \ldots, d-3$. Thus the bound on the number of 
$\ell$-unramified lines on $\XXd$ depends on the presentations
\begin{equation} \label{eq-fraction-decomp}
\sss_j(1,x_2) =  \frac{\tilde{\sss}_j}{\ddd_j}   \, .
\end{equation}
where  $\tilde{\sss}_j$, $\ddd_j$ $\in \kk[x_2]$ are relatively prime.

As an illustration, we compute  the functions $\sss_0$, $\sss_1$ for the quartic surfaces from the family ${\mathcal Z}$ 
which was central for \cite[$\S~4$]{RS}.

\begin{example} \label{example-familyZ} 
Let $\XXf$ be a quartic that contains a line $\ell$ of the second kind of ramification type $(2^2)$.
For now, let $\KK$ denote an algebraically closed field of characteristic $\neq 2,3$.
By an elementary computation (see \cite[Lemma~4.5]{RS}) we can assume that $\XXf$ is  given by the equation
\begin{equation*} \label{eq-family-Z}
 x_3x_1^3+x_4x_2^3+x_1x_2q(x_3,x_4)+p(x_3,x_4), 
\end{equation*}
where $q=\sum_{j=0}^2 q_j x_3^j x_4^{2-j}$, $p=\sum_{j=0}^4 p_j x_3^j x_4^{4-j}$ and   $\ell := V(x_3, x_4)$. 
The ramification points are  $P_1 :=(1:0:0:0)$, $P_2 := (0:1:0:0)$ and we have  $T_{P_j}\XXf = V(x_{j+2})$. 
We obtain $\sss_0 = 0$ and
\begin{eqnarray*}  \label{eq-barth-poly-quartic}
\sss_1 & = & \textstyle{\frac{1}{27 x_1^3 x_2^{15}}} \cdot  
( 
-q_2^{3}x_2^{18}+ \left( 27\,p_4+3\,q_1\,q_2^{2} \right) x_2^{15} x_1^3   \\
&  &  - \left( 27\,p_3+3\,q_0\,q_2^{2}+3\,q_1^{2}q_2 \right) x_2^{12} x_1^6 +
\left( q_1^{3}+6\, q_0\,q_1\,q_2+27\,p_2 \right) x_2^{9} x_1^9  \\
& & - \left( 3\,q_0\,q_1^{2}+27\,p_1+3\,q_0^{2}q_2 \right) x_2^{6} x_1^{12} + 
\left( 3\,q_0^{2}q_1+27\,p_0 \right) x_2^{3} x_1^{15} - q_0^{3} x_1^{18})
\end{eqnarray*}
By Prop.~\ref{lemm-fundamental} the line $\ell$ is met by exactly $18$ $\ell$-unramified lines provided  $q_2 q_0 \neq 0$.

We want to check what happens when the ramified fibers of \eqref{eq:fibration-deg-d} contain lines.
 One can easily see that 
the fiber  $F_{P_1}$ contains exactly
one (resp. three) lines if and only if $p_0=0$ (resp. $q_0=0$). 
 A similar argument shows that $F_{P_2}$ contains exactly one (resp. three) lines if and only if
$p_4=0$ (resp. $q_2=0$). 

Whenever a ramified fiber (resp. both ramified fibers) acquires a triplet of lines, Prop.~\ref{lemm-fundamental} implies that 
the number of $\ell$-unramified lines drops by three (resp. six) and we arrive at exactly $18$ lines that meet $\ell$ if $p_0p_4\neq 0$,
or at least $19$ if  $p_0p_4= 0$. 
The maximal number $20$
is attained precisely when both $p_0$, $p_4$ vanish, but no $q_0, q_2$. 
In this case, the line $\ell$ is met by $20$ other lines on the quartic $\XXf$: exactly one line through 
each of ramification points and $18$ $\ell$-unramified lines.  
\end{example}

In Example~\ref{example-familyZ}  we gave an elementary proof of  the most important part of \cite[Prop.~1.1]{RS}.
The computations we just carried out depict a useful property. Apparently the presentation \eqref{eq-fraction-decomp}  
can be used only  to count  the $\ell$-unramified lines. Still, 
the functions $\sss_0$, $\ldots$, $\sss_{d-3}$
detect the existence of many lines through the ramified points of \eqref{eq-map-deg-d} as well
(i.e. the degree of the numerator of $\tilde{\sss}_j$  decreases when many lines on $X_d$ 
run through a ramification point of the map \eqref{eq-map-deg-d}).
On the other hand, the functions $\sss_0$, $\ldots$, $\sss_{d-3}$ can also fail 
to detect the existence of a line $\neq \ell$ through a ramification point  
(see ~\ref{subsec-twothree}).


\section{Lines of the second kind on quintic surfaces} \label{sect-sec-kind-quintics}

Let $\XXp \subset \PP^{3}$ be a { smooth} quintic surface that contains a line $\ell$. 
In this case we obtain the degree-$4$ map \eqref{eq-map-deg-d} and the ramification divisor $R_{\ell}$ is  of 
degree six,  so there are seven  possible ramification types of a line
(plus the subcases when two ramification points lie in the same fiber). 
In this section we discuss important properties of quintic surfaces with lines of the second kind. 

If $P\in\ell$ is an $n$-fold ramification point, 
then generally there can be at most $n+1$ other lines on $\XXp$ containing $P$.
For simple ramification points on a line of the second kind, 
there is a useful strengthening
(which is a  consequence of \eqref{eq-rem2-3-condition}).

\begin{lemm} \label{lem-1line}
Let $\ell$ be a line of the second kind on a smooth quintic $\XXp$ and let $P$ occur in the divisor  $R_{\ell}$ with multiplicity one.
Then at most two lines on $\XXp$ run through the point $P$ (one of them, by assumption, being $\ell$). 
\end{lemm}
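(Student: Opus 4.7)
The plan is to argue by contradiction. Suppose that, besides $\ell$ itself, two distinct lines $\ell_1,\ell_2\subset\XXp$ run through $P$. Since every line on $\XXp$ through $P$ must lie in the tangent plane $T_P\XXp$, one can normalize the homogeneous coordinates so that $\ell=V(x_3,x_4)$, $P=(1:0:0:0)$, $T_P\XXp=V(x_4)$, $\ell_1=V(x_2,x_4)$, and $\ell_2=V(x_2-c\,x_3,x_4)$ for some $c\in\KK^{*}$. Expanding $f$ as in~(\ref{eq-expansion-d}), the first step is to read off constraints on the coefficients $\alpha_{i,j,k,l}$: the inclusion $\ell_1\subset\XXp$ yields $x_2\mid\alpha_{i,0}$ for every $i\geq 1$, and then the inclusion $\ell_2\subset\XXp$ (upon comparing the coefficient of $t^{3}$ in $f(1,ct,t,0)$) gives $\alpha_{2,0,2,1}=-c\,q$, where $q:=\alpha_{1,0,2,2}$. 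On the other hand, Lemma~\ref{lem-d2vanish} combined with the simple ramification of $\pi|_\ell$ at $P$ forces $\alpha_{1,0,3,1}=0$ and $q\neq 0$.

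Next I would evaluate the function $\sss_0(f)$ of Definition~\ref{def-bf} at $P$. Working in the affine chart $x_1=1$ on $\ell$, one checks that under the above constraints $A$ has a simple zero and $C$ a double zero at $P$, while the a priori simple pole of $B$ coming from $\mathfrak{d}_2(f)(P)=0$ is cancelled by the extra vanishing $\alpha_{2,0}(1,0)=0$ (a consequence of $\ell_1\subset\XXp$). The resulting finite limit is $B(P)=-\alpha_{2,0,2,1}/(2q)$. In the defining sum of $\sss_0$, only the summands with $j_3=2$ and $j_2=0$ avoid being killed by $A(P)$ or $C(P)$; of the four such summands, two vanish automatically because $\alpha_{0,0}\equiv 0$ and $x_2\mid\alpha_{3,0}$. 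Putting everything together gives
\[
\sss_0(P) \;=\; q\,B(P)^{2}+\alpha_{2,0,2,1}\,B(P) \;=\; -\frac{\alpha_{2,0,2,1}^{\,2}}{4\,q}\,.
\]

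The final step is to invoke the second-kind hypothesis. By Proposition~\ref{lemm-lines-second-kind}, $\sss_0$ vanishes identically along $\ell$, so in particular $\sss_0(P)=0$. Together with $q\neq 0$ this forces $\alpha_{2,0,2,1}=0$, contradicting $\alpha_{2,0,2,1}=-c\,q\neq 0$ established above. Hence no two distinct lines of $\XXp\setminus\{\ell\}$ can share the point $P$, proving the lemma.

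The delicate point is the computation of the finite value $B(P)$: at a ramification point $B$ generically acquires a simple pole via $\mathfrak{d}_2(f)$, and the cancellation of this pole depends precisely on the hypothesis that the \emph{first} extra line $\ell_1$ passes through $P$. Once this is in place, the remaining steps---the order calculations for $A$ and $C$ and the polynomial identity yielding $\sss_0(P)=q B(P)^{2}+\alpha_{2,0,2,1}B(P)$---reduce to straightforward manipulations of the formulas in Definition~\ref{def-bf}.
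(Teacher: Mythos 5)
Your argument is correct; I verified the key computations. With your normalization ($T_P\XXp=V(x_4)$, hence $\alpha_{1,0,4,0}=0$, and simple ramification giving $\alpha_{1,0,3,1}=0$ and $q=\alpha_{1,0,2,2}\neq 0$), one finds $\mathfrak{d}_2(f)|_\ell=2\alpha_{0,1,4,0}\,q\,x_2+O(x_2^2)$, the numerator of $B$ reduces to $-\alpha_{2,0}\alpha_{0,1}^2+O(x_2^2)$, and the vanishing $\alpha_{2,0,3,0}=0$ forced by $\ell_1\subset\XXp$ indeed cancels the pole, so that $B(P)=-\alpha_{2,0,2,1}/(2q)$ and $\sss_0(P)=qB(P)^2+\alpha_{2,0,2,1}B(P)=-\alpha_{2,0,2,1}^2/(4q)$. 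Since $\sss_0$ is a polynomial in $A,B,C$ with constant coefficients and $A,B,C$ all have finite limits at $P$, its identical vanishing along $\ell$ (Prop.~\ref{lemm-lines-second-kind}) forces $\alpha_{2,0,2,1}=0$, contradicting $\alpha_{2,0,2,1}=-cq\neq 0$. This is a genuinely different route from the paper's: there, one assumes a single extra line $\ell'$ through $P$, invokes the second-kind hypothesis in the form \eqref{eq-rem2-3-condition} (vanishing of the coefficients of the Hessian remainder from Remark~\ref{rem-skcheck-d}) to kill the mixed coefficient of the quadratic part of $F_P$ at $P$, and concludes that the reduced tangent cone $C_PF_P$ is exactly $\ell'$, so no second line component of $F_P$ can pass through $P$. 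Both proofs ultimately exploit the same phenomenon --- the second-kind condition at a simple ramification point annihilates the cross term of the tangent cone of $F_P$ --- but your implementation via $\sss_0$ is more in the spirit of Section~\ref{sect-segre-functions} and avoids the Hessian computation, at the price of the careful limit analysis of $A$, $B$, $C$ at the ramification point, which you carry out correctly.
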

\begin{proof} We maintain the notation of Remark~\ref{rem-skcheck-d}, put $P= (1:0:0:0)$, 
\begin{equation} \label{eq-lem41}
\alpha_{1,0}=x_1^2 (x_1- c_1 x_2)(x_1 - c_2 x_2), \quad \alpha_{0,1}=x_2^2(x_2- d_1 x_1)(x_2- d_2 x_1) \, . 
\end{equation}
and assume, after some linear transformation, that the line $\ell':=V(x_2, x_3)$ lies on the quintic surface $\XXp$. 
By assumption, we have  $d_1 d_2 \neq 0$, since otherwise $P$ would not be a simple ramification point.
Hence  
\eqref{eq-rem2-3-condition} yields $\alpha_{0221}=\alpha_{0311}=0$.  One can easily check that the line $\ell'$
is the (reduced) tangent cone $C_{P}F_{P}$.
\end{proof}

The condition \eqref{eq-rem2-3-condition} 
reduces the problem of finding surfaces with lines of the second kind to
solving (relatively large) systems of polynomial equations.   In the remainder of this section we explain how one can find
all quintics with a line $\ell$ of the second kind such that
\begin{equation} \label{eq-rrl-five}
R_{\ell} \in \{ (2^2,1^2), \, (2^3), \, (3,1^3), \, (3,2,1), \, (3^2)  \} \, .
\end{equation}
The remaining two ramification types, namely $(1^6)$ and $(2,1^4)$, are harder to parametrize,
so we will use different methods to deal with them (in the next section) in order to prove Theorem \ref{thm1}.

Let us assume that $\XXp$ is given by \eqref{eq-expansion-d}  with $d=5$, $\ell = V(x_3, x_4)$ and $r$ is the remainder 
defined in Remark~\ref{rem-skcheck-d}.  
One can easily check that each 
 $r_{i,j}$ 
\begin{itemize}
\item is of degree at most $2$ with respect to  $\alpha_{i,j,k,l}$, when $i+j=2$,
\item is  linear  with respect to   $\alpha_{i,j,k,l}$, for $i+j=3$,
\item is constant with respect to $\alpha_{i,j,k,l}$ with $i+j \geq 4$. 
\end{itemize}

By a linear  coordinate change that does not alter the ramification type of $P$ 
(of the form  $\tilde{x}_i= x_i+ \sum_{j=3,4} b_{i,j} x_j$ where $i=1,2$, $j=3,4$),
we can assume that four of the coefficients $\alpha_{i,2-i,j,k}$ vanish (the most convenient choice of the four coefficients
will depend on the ramification type we study -- see e.g. \eqref{eq-vanishing-4coeff-33}).  
Moreover, a direct computation shows that   $\alpha_{2003}$, $\alpha_{0230}$ are zero
(a geometric interpretation of this fact for some ramification types can be found in  \cite[$\S$~5]{Segre}). 

Throughout the remainder of  this section, we denote two of the ramification points (after some linear transformation)  by 
$P_1 := P = (1:0:0:0)$, $P_2:= (0:1:0:0)$. 
Then we fix the polynomials  $\alpha_{1,0}$, $\alpha_{0,1}$ appropriate for the given ramification type. 
A determinant computation shows that for all quintic surfaces we consider
one can choose twelve $r_{i,j}$ such that the linear system of equations they define 
(with the indeterminates $\alpha_{i,3-i,k,l}$)
has a unique solution. In this way we obtain a map 
\begin{equation} \label{eq-paraRl}
\kk^6 \times \kk^{16} \rightarrow  {\mathcal O}_{\PP^3}(5)
\end{equation}
that associates to $\alpha_{i,j,k,l}$, where $(i+j) \in \{2,4,5\}$,  the quintic given by \eqref{eq-expansion-d}. 

The substitution of \eqref{eq-paraRl} into all $r_{i,j}$'s yields several affine quadrics in the affine parameter space $\kk^6$ 
which corresponds to $\alpha_{i,2-i,k,l}$.
Let us denote their (set-theoretic)  intersection by ${\mathcal W}$. 
By definition, the quintics with a line of the second kind of the considered ramification type
can be parametrized by the image of the restriction of \eqref{eq-paraRl} to 
$$
{\mathcal W} \times \kk^{16} 
$$
It should be pointed out that in all cases we study ${\mathcal W} \subset \kk^6$ happens to be an affine subspace.

Below we discuss the ramification types from \eqref{eq-rrl-five} one-by-one in a series of subsections. 
We omit certain formulae to keep our exposition compact, 
but all formulae can be  obtained with the help of any computer algebra system without difficulty.


\begin{rem}
A careful analysis, for instance based on a parameter count,
reveals that, just like for quartics \cite{rs-advgeom},
quintics with a line of the second kind do not form a single irreducible family,
as different ramification types can yield distinct families.
This can also be inferred from the degree of a certain  surface in $\PP^3$ ($S^r_{11}$ to be investigated in Section \ref{s:princ},
see Remark \ref{rem:deg9}).
\end{rem}

\subsection{Two triple ramification points} \label{example-one-triple-point}
Let $\alpha_{1,0}=x_1^4$ and $\alpha_{0,1}=x_2^4$.  In particular we have $R_{\ell} = 3(P_1 + P_2)$  and $T_{P_i}\XXp = V(x_{i+2})$   
for both ramification points. \\
After a linear change of coordinates we can assume  that 
\begin{equation} \label{eq-vanishing-4coeff-33}
\alpha_{1130} = \alpha_{1103} = \alpha_{2030} = \alpha_{0203} = 0.
\end{equation} 
We consider the system of equations given by  $r_{0,2}$, $\ldots$, $r_{0,5}$ and $r_{i,j}$ where $i=2,3$, $j=1,2,3,4$, to arrive at \eqref{eq-paraRl}.
One can easily check that ${\mathcal W} = V(\alpha_{2012}, \alpha_{0221})$ in this case.

Consequently, every smooth quintic $\XXp$ with a line of the second kind of ramification type $(3^2)$ 
can be given by 
\eqref{eq-expansion-d} where 
\begin{eqnarray} \label{eq-quintic33-sc}
 \alpha_{{3002}}= \textstyle{\frac{1}{8}} {\alpha_{{2021}}}^{2} ,  &  \alpha_{{0320}}= \textstyle{\frac{1}{8}} {\alpha_{{0212}}}^{2},  &  \alpha_{{2102}}= \textstyle{\frac{1}{4}} \alpha_{{1121}}\alpha_{{2021}}, \\
\alpha_{{2120}}= \textstyle{\frac{1}{8}} {\alpha_{{1112}}}^{2} , & \alpha_{{1202}}= \textstyle{\frac{1}{8}} {\alpha_{{1121}}}^{2}, & 
\alpha_{{1220}}= \textstyle{\frac{1}{4}} \alpha_{{0212}}\alpha_{{1112}} \nonumber
\end{eqnarray}
and the other coefficients  $\alpha_{i,3-i,k,l}$ vanish.

\begin{rem} \label{rem-triple-points} 
Suppose that $\ell$ is of ramification type $(3^2)$.
Computing the (formal) Taylor expansion of the equation of $F_{P_1}$ (resp.  $F_{P_2}$)  around $P_1$ (resp. $P_2$) one checks that
the line $\ell$ is the tangent cone of $F_{P_1}$ (resp.  $F_{P_2}$) provided either $\alpha_{0212} \neq 0$ or $\alpha_{0410} \neq 0$ (resp. either $\alpha_{2021} \neq 0$ or 
$\alpha_{4001} \neq 0$). 
\end{rem}

\subsection{One triple ramification point}
\label{example-one-triple-point-b}
In order to study   ramification types with a single triple point 
we put 
$$
\alpha_{1,0}=x_1^4, \quad \alpha_{0,1}=x_2^2(x_2-x_1)(x_2-c x_1)
$$
in \eqref{eq-expansion-d}. As in  \ref{example-one-triple-point},
we can assume that \eqref{eq-vanishing-4coeff-33} holds
and check that the same system of twelve linear equations has a unique solution for every $c \in \kk$.

A discriminant computation reveals that for
three values of $c$ (namely $c=0$  and $9c^2-14c+9=0$), 
we obtain the ramification type $(3,2,1)$, whereas $c=1$ uniquely leads to 
$(3,1,[1,1])$. Otherwise the line $\ell$ is of the (non-degenerate) type $(3,1^3)$ 
(i.e. no pair of ramification points lies in one and the same fiber). 

One can check that for $c=0$ (i.e. ramification type  $(3,2,1)$) the set ${\mathcal W}$ is given by vanishing of $\alpha_{2012}$ and
$$
\alpha_{0212}= - \textstyle{\frac{9}{64}} \alpha_{1121} - \textstyle{\frac{4}{3}} \alpha_{0221} 
- \textstyle{\frac{243}{16384}}  \alpha_{2021} - 
\textstyle{\frac{27}{256}}  \alpha_{1112}
$$
For the values of $c$ such that $\ell$ is not of ramification type $(3,2,1)$, 
 ${\mathcal W}$ is a $3$-dimensional affine subspace of $\kk^6$ given, among others, by the vanishing of $\alpha_{2012}$. 

\medskip

In the remark below we maintain the notation of \ref{example-one-triple-point}. 
Recall that the fiber $F_{P_1}$ (resp.  $F_{P_2}$) is the quartic curve residual to $\ell$ in the intersection of $\XXp$ with the plane $V(x_3)$ (resp. $V(x_4)$).

\begin{rem} \label{rem-triple-point} 
{\sl a.} Suppose that $\ell$ is of the type $(3,2,1)$. Without loss of generality 
we can assume that $P_1$ (resp. $P_2$) is a double, (resp. a triple) ramification point   and the fiber $F_{P_1}$ meets the line $\ell$ in the point 
$(1:1:0:0)$. This amounts to the choice   $c=0$ in  \ref{example-one-triple-point-b} (in particular, we do not have to consider the other values of $c$
for the ramification type  $(3,2,1)$).  
 Then 
$$R_{\ell} = 2 P_1 + 3 P_2 + P_3, \mbox{ with }  P_3 = (4:3:0:0).$$
As in Remark~\ref{rem-triple-points}, one can check that if the fiber  $F_{P_2}$ contains a line through the point $P_2$, then $\alpha_{2021}$, $\alpha_{4001}$ vanish. 
By Lemma~\ref{lem-1line} the quintic $\XXp$ contains at most one line $\neq \ell$ through the point $P_3$.  

\noindent
{\sl b.} For the ramification $(3,1^3)$ (including the case $(3,1,[1,1])$) we have the same condition for  $F_{P_2}$ to have a degree-1 component that contains the triple ramification point $P_2$. 
Lemma~\ref{lem-1line} shows that $\XXp$ contains at most one line $\neq \ell$ through each of the other ramification points.
\end{rem}

\subsection{Double ramification points} 
\label{example-double-point}
Let us assume that 
$$
\alpha_{1,0}=x_1^3(x_1-x_2), \quad \alpha_{0,1}=x_2^3(x_2-c x_1) \;\; \mbox{ and } \; c \neq 0, 1.
$$

A discriminant computation shows that if  $c \neq 0,1,4, -8$, the line $\ell$ is of the non-degenerate 
type $(2^2,1^2)$. Obviously, $c=1$ yields a singular quintic. 
For  $c=4$ 
we have $R_{\ell} = 2 (P_1 + P_2 + P_3)$, where $P_3 = (1:2:0:0)$, so  the line $\ell$ is of ramification type $(2^3)$.           
Finally, if $c=-8$,
 then the fiber of the parametrization \eqref{eq-map-deg-d} defined by  the plane $V(x_3 - 64x_4)$ consists 
of exactly  two (different) ramification points. Hence, the line $\ell$ is of  the ramification type $(2^2,[1,1])$.

After some linear transformation,
 we can assume that $\alpha_{2021}$, $\alpha_{2030}$, $\alpha_{1130}$, $\alpha_{0212}$ vanish.
In order to determine the map  \eqref{eq-paraRl}  for a fixed $c \neq 0$, we
solve the system of equations given by the vanishing of $r_{3,0}$, $\ldots$, $r_{3,4}$ and $r_{i,j}$ where $i=1,2$ and $j=1,2,4,5$.

For $c=4$ the intersection of quadrics ${\mathcal W}$ is given by the single equation
\begin{equation} \label{par-222-lastform}
\alpha_{0203} = -128 \alpha_{2012} + 16 \alpha_{1103} + 8 \alpha_{1112} + 4 \alpha_{1121} - \alpha_{0221}/4.
\end{equation}
If $c=-8$ the set  ${\mathcal W}$ is  a codimension-2 vector subspace of $\kk^6$
 cut out by:
\begin{eqnarray*} \label{par-22[1,1]-lastform}
\alpha_{0203} &=& -64 \alpha_{1103} + 16 \alpha_{1112} + 1024 \alpha_{2012}, \\
\alpha_{0221} &=& -128 \alpha_{1112}- 64 \alpha_{1121} - 8192 \alpha_{2012}.
\end{eqnarray*}
For $c \neq 0,1,4, -8$, we obtain a codimension-$2$ affine 
 subspace again. Indeed, one can easily check that
$\alpha_{0203}$,  $\alpha_{1103}$ can be uniquely expressed as elements of 
$\kk[c,1/c][\alpha_{0221}, \alpha_{1112}, \alpha_{1121}, \alpha_{2012}]$.

\begin{rem} \label{rem-double-point}
%
\noindent
{\sl a.} By a tangent cone argument  
if  $a_{0221}\neq 0$ (resp.~$a_{2012} \neq 0$)  then at most one line $\neq \ell$ on $\XXp$ 
runs through the point $P_1$ (resp.~$P_2$). 

\noindent
{\sl b.} A similar argument shows that if  $a_{0221}$, $a_{0410}$, $a_{0401}$ (resp.~$a_{2012}$, $a_{4010}$, $a_{4001}$) vanish simultaneously, then the fiber $F_{P_1}$ (resp.~$F_{P_2}$)
contains no lines at all through the point $P_1$ (resp.~$P_2$). 
\end{rem}

%


\section{Segre's surface of principal lines along a line} \label{sect-segre-surface}
\label{s:princ}

    Having discussed the five ramification types from \eqref{eq-rrl-five},
    it remains to analyse  the ramification types $(1^6)$ and $(2,1^4)$
    (which, as we pointed out before, are relatively hard to parametrize explicitly).
    Their analysis is the content of the present section using an idea that implicitly appears in \cite{Segre}
to count the lines that meet a line of the second kind of the given ramification type. 

To this end, we consider the following subset of the Grassmannian Gr$(1,3)$,
\begin{eqnarray*}
{\mathfrak S}_{\ell} &
:= & \{      
\tilde{\ell} \, : \, \tilde{\ell} \mbox{ is a line}, \exists \mbox{ } P \in \ell \mbox{ such that } \tilde{\ell} \subset \operatorname{T}_P \XXp  \mbox{ and }  \ii(\tilde{\ell}, F_{P};P) \geq 2 \, \},
\end{eqnarray*}
where
$F_P$ denotes the residual fiber through $P$ and
$\ii(\cdot)$ stands for the 
intersection multiplicity of the planar curves  $\tilde{\ell}$ and $F_{P}$ in the tangent plane $\operatorname{T}_P\XXp$ 
computed in the point $P$. 
We have the following lemma (maintaining the notation of Remark~\ref{rem-skcheck-d} and \cite[Lemma~2.3]{rs-advgeom}).
\begin{lemm} \label{lemma-ruledoctic}

The total space
  $\cup \{ \tilde{\ell} \, : \, \tilde{\ell} \in  {\mathfrak S}_{\ell} \} =: \Ruledeight$
is an algebraic surface of degree at most eleven.
\end{lemm}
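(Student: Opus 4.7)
The plan is to parametrize the generic lines in $\mathfrak{S}_\ell$ by their intersection point with $\ell$, compute the corresponding Plücker coordinates as rational functions on $\ell$, and invoke the equality between the degree of a ruled surface in $\PP^3$ and the degree of its associated image curve in $\mathrm{Gr}(1,3) \subset \PP^5$.

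First, I would identify the generic member of $\mathfrak{S}_\ell$. For $P \in \ell$ lying outside the finite vanishing loci of $\mathfrak{d}_2(f)|_\ell$ and $\partial f/\partial x_4|_\ell$, Observation~\ref{obs-deg-hessian} guarantees that $V_P X_5 \cap T_P X_5$ is a proper conic, which splits as $\ell \cup \tilde\ell_P$, where $\tilde\ell_P := \Phi_P(\PP^1)$ is the line from the proof of Proposition~\ref{lemm-fundamental}. For any $\tilde\ell \neq \ell$ in $T_P X_5$ through $P$, the condition $\ii(\tilde\ell, F_P; P) \geq 2$ translates, via the divisor identity $X_5 \cap T_P X_5 = \ell + F_P$, into $\ii(\tilde\ell, X_5; P) \geq 3$, i.e.\ into $\tilde\ell$ being asymptotic to $X_5$ at $P$. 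Hence $\tilde\ell_P$ is the unique non-$\ell$ member of $\mathfrak{S}_\ell$ passing through such a generic $P$, and the family $\{\tilde\ell_P\}_{P \in \ell}$ provides a $\PP^1$-family of lines sweeping out $\Ruledeight$.

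Next, I would compute the Plücker coordinates of $\tilde\ell_P$ using the parametrization $\Phi_P$. Applying Euler's identity to the degree-$4$ forms $\alpha_{1,0}, \alpha_{0,1}$ gives on $\ell$ the relation $x_1 \mathfrak{d}_1(f)|_\ell + x_2 \mathfrak{d}_2(f)|_\ell = 4\alpha_{1,0}\alpha_{0,1} - 4\alpha_{0,1}\alpha_{1,0} = 0$, whence $A(P) = -\mathfrak{d}_1/\mathfrak{d}_2 = p_2$ and, since $\gcd(x_1, x_2) = 1$, the crucial divisibility $x_1 \mid \mathfrak{d}_2(f)|_\ell$. The line $\tilde\ell_P$ is then spanned by $P = (1{:}p_2{:}0{:}0)$ and the direction vector $(0{:}B(P){:}1{:}C(P))$, so its six Plücker coordinates read $(B : 1 : C : p_2 : p_2 C : 0)$. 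Clearing the common denominator $D := 2\mathfrak{d}_2(f)|_\ell \cdot \alpha_{0,1}$, the $x_1$-divisibility reduces the effective $p_2$-degree of $D$ (and of $C \cdot D = -2\mathfrak{d}_2(f)|_\ell \cdot \alpha_{1,0}$) to at most $10$, while the numerator $N_B := B \cdot D = 2(\alpha_{1,0} \alpha_{0,1} \alpha_{1,1} - \alpha_{0,2} \alpha_{1,0}^2 - \alpha_{2,0} \alpha_{0,1}^2)$ has degree $11$. Hence all six scaled Plücker coordinates have $p_2$-degree at most $11$.

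Since the degree of a ruled surface in $\PP^3$ equals the degree of the image of the associated morphism $\ell \to \mathrm{Gr}(1, 3) \subset \PP^5$, we conclude $\deg \Ruledeight \leq 11$. The main obstacle is the delicate degree accounting: a priori the terms $p_2 \cdot D$ and $N_B$ both suggest a bound of $12$, and only the Euler-identity divisibility $x_1 \mid \mathfrak{d}_2(f)|_\ell$ brings the bound down to $11$. A routine secondary check is that the rational map $P \mapsto \tilde\ell_P$ extends to a morphism $\ell \to \mathrm{Gr}(1,3)$ (automatic since $\ell \cong \PP^1$ is smooth complete and $\mathrm{Gr}(1,3)$ is projective), so that the closure $\Ruledeight$ is genuinely covered by the $\PP^1$-family of lines and the Plücker-degree bound applies.
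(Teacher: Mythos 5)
Your computation of the generic line $\tilde\ell_P$ and of its Pl\"ucker coordinates is essentially correct: the identity $x_1\mathfrak{d}_1(f)|_\ell+x_2\mathfrak{d}_2(f)|_\ell=0$ (Euler), the resulting divisibility $x_1\mid\mathfrak{d}_2(f)|_\ell$, and the degree bookkeeping do show that the closure $S'$ of the one-parameter family $\{\tilde\ell_P\}_{P\ \mathrm{generic}}$ has Grassmannian degree, hence degree in $\PP^3$, at most $11$. This is a different route from the paper, which packages the tangent line to $F_P$ at $P$ and the tangent plane $T_P\XXp$ into two bihomogeneous forms $H_{11}$, $H_4$ and eliminates the parameter by a resultant.

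There is, however, a genuine gap: $\Ruledeight$ is defined as the union of \emph{all} lines in ${\mathfrak S}_{\ell}$, not as the closure of the generic family, and the two do not coincide. At any point $P\in\ell$ where the residual quartic $F_P$ is singular at $P$ (equivalently, where the conic $V_P\cap T_P\XXp$ degenerates), \emph{every} line through $P$ inside $T_P\XXp$ satisfies $\ii(\tilde\ell,F_P;P)\geq 2$, so the whole plane $T_P\XXp$ lies in $\Ruledeight$. These plane components are exactly what Lemma~\ref{lemma-splitplane}.a exhibits at ramification points of a line of the second kind, and they are indispensable later (Corollaries~\ref{cor-type111111} and~\ref{cor-type21111} subtract them from the degree-$11$ budget). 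Your $S'$ is irreducible, being the closure of the image of an irreducible $\PP^1$-family, so it cannot contain these planes; the closing assertion that ``$\Ruledeight$ is genuinely covered by the $\PP^1$-family'' is the false step, since extending $\ell\to\mathrm{Gr}(1,3)$ to a morphism recovers only one limiting line over each special point, not the whole pencil. To repair the argument you must show that at each such special point all six cleared Pl\"ucker forms acquire a common zero (these points are precisely the common zeros of $N_B$ and $\mathfrak{d}_2(f)|_\ell$, and the common factor of your six forms is $\gcd(N_B,\mathfrak{d}_2(f)|_\ell/x_1)$), so that the degree of $S'$ drops by at least one for every plane that has to be added back, keeping the total at $11$. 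The paper's resultant captures these planes automatically, because $H_{11}(P;\cdot)$ vanishes identically at such $P$.
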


\begin{proof} 

Let $P = (p_1: p_2: 0: 0)  \in \ell$ be a point such that $\alpha_{0,1}(p_1,p_2) \neq 0$. 
The residual quartic curve
 $F_{P}\subset T_PX_5$ is given by the polynomial 
$$
g(x_1,x_2,x_3) := f( \alpha_{0,1}(P)x_1, \alpha_{0,1}(P) x_2, \alpha_{0,1}(P) x_3, - \alpha_{1,0}(P) x_3)/x_3 \, .
$$
Indeed, this is just $(\alpha_{0,1}(P)^5 \cdot f_{C(P)})$, where $f_{C(P)}$ is defined by \eqref{eq-flambda-barthfunction}. 

By direct computation, there exist $h_1$, $h_2 \in \kk[x_1, x_2]$ (resp. $h_3$) of degree $7$  (resp. $11$) such that  
\begin{eqnarray*}
\textstyle{\frac{\partial g}{\partial x_3 }}(p_1,p_2,0) &=&  \alpha^3_{0,1}(p_1,p_2) \cdot h_3(p_1,p_2), \\ 
\textstyle{\frac{\partial g}{\partial x_j }}(p_1,p_2,0) &=&  \alpha^4_{0,1}(p_1,p_2) \cdot h_j(p_1,p_2) \;\;
\mbox{ for } j=1,2.
\end{eqnarray*}
We  define bihomogenous polynomials $H_{11} \in \kk[z_1,z_2][x_1, \ldots,x_4]$ (resp. $H_4$)
of bidegree $(11,1)$ (resp. $(4,1)$) 
\begin{eqnarray*}
 H_{11} &:=&  \alpha_{0,1}(z_1,z_2) \cdot h_1(z_1,z_2) \cdot x_1 + 
\alpha_{0,1}(z_1, z_2) \cdot h_2(z_1, z_2) \cdot x_2 + h_3(z_1, z_2) \cdot x_3 \, ,  \\
H_{4} &:=& \alpha_{1,0}(z_1, z_2) \cdot x_3 + \alpha_{0,1}(z_1, z_2) \cdot x_4 \,  ,
\end{eqnarray*}
and repeat almost verbatim the proof of \cite[Lemma~2.3]{rs-advgeom},
dehomogenizing  by putting $z_2 =1$ and computing the resultant with respect to $z_1$.
 (In \cite{rs-advgeom} one deals with a quartic surface $\XXf$ and considers
 bihomogeneous polynomials $H_8$, $H_3$ instead of $H_{11}$, $H_4$, but that's about the only  difference.) 
\end{proof}

\begin{Definition}  \label{def-spl}
The surface $\Ruledeight$ will be called Segre's \emph{surface of principal lines} along $\ell$.
\end{Definition}

\begin{rem} \label{rem-cSspl}
Suppose $P \in \ell$ and the tangent space $T_P\XXp$ meets the hessian quadric $V_P$ along two lines (obviously one of them  is $\ell$). Then, by definition both lines  
are contained in  $\Ruledeight$.  
\end{rem}


Segre's surface of principal lines has the following useful properties 
(the proofs of claims a) and b) use the standard topology of $\PP^3(\CC)$, 
so here the initial characteristic-$0$ assumption of the paper is relevant).

\begin{lemm} \label{lemma-splitplane}
Let $\ell \subset \XXp$ be a line of the second kind. 
\begin{enumerate}
\item[a)] If a point $P$ appears in  $R_{\ell}$ with multiplicity one or two,
then the tangent plane $\operatorname{T}_{P}\XXp$ is a component of the ruled surface $\Ruledeight$. 
\item[b)] 
If  $P$ appears in  $R_{\ell}$ with multiplicity one, then the surface residual to  $\operatorname{T}_{P}\XXp$ in $\Ruledeight$
contains all lines on $\XXp$ that run through $P$.
\item[c)] If the support $\mbox{supp}(R_{\ell})$ consists of at least five points and 
a fiber $F_{P_2}$ contains two ramification points, then 
$\operatorname{T}_{P_2}\XXp$ appears with multiplicity at least two in $\Ruledeight$.
\end{enumerate}
\end{lemm}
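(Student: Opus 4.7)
The strategy is to analyse the ruled surface $\Ruledeight$ both via its geometric description as the sweep of the principal lines $\Phi_Q$ from Proposition~\ref{lemm-fundamental} --- which, on a line of the second kind, coincide with the inflection tangent of $F_Q$ at $Q$ for unramified $Q$ by Proposition~\ref{lemm-lines-second-kind} --- and via the explicit resultant description from Lemma~\ref{lemma-ruledoctic}. The central claim underlying part~(a) is that at a ramification point $P^*\in R_\ell$ of multiplicity $m\in\{1,2\}$, the point $P^*$ is necessarily a singular point of the residual fiber $F_{P^*}$.

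Suppose for contradiction that $F_{P^*}$ is smooth at $P^*$. The equality $\ii(\ell,F_{P^*};P^*)=m+1\geq 2$ forces $\ell$ to be the unique tangent to $F_{P^*}$ at $P^*$. For $m=1$ this is incompatible with the second-kind identity $\sss_0\equiv 0$: by continuity in the classical topology (available in characteristic zero), the algebraic limit $\lim_{Q\to P^*}\Phi_Q$ must itself be an inflection tangent to $F_{P^*}$ at $P^*$, and through the smooth point $P^*$ this tangent is necessarily $\ell$, so $\ii(\ell,F_{P^*};P^*)\geq 3$, contradicting $m=1$. For $m=2$ the analogous limit argument is more delicate: the candidate inflection tangent $\ell$ is already contained in $X_5$, so $f|_\ell\equiv 0$ makes the root-of-multiplicity-four condition vacuous at leading order, and one must track the sub-leading behaviour of $f\circ\Phi_Q$ as $\Phi_Q\to\ell$ (essentially via the next $\sss$-function) to force $\ii(\ell,F_{P^*};P^*)\geq 4$, contradicting $m=2$. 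Once $P^*$ is known to be singular on $F_{P^*}$, every line in $T_{P^*}X_5$ through $P^*$ meets $F_{P^*}$ at $P^*$ with multiplicity at least two and thus lies in $\Ruledsurface_\ell$; this pencil fills out the plane $T_{P^*}X_5$, exhibiting it as a two-dimensional component of $\Ruledeight$.

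For part~(b) write $\Ruledeight = T_{P^*}X_5 + S'$ as effective divisors in $\PP^3$. A line $\ell'\neq\ell$ on $X_5$ through $P^*$ is a linear component of $F_{P^*}$ (being contained in $X_5\cap T_{P^*}X_5=\ell + F_{P^*}$), hence $\ell'\subset T_{P^*}X_5$, and $\ell'\in\Ruledsurface_\ell$ via the witness $Q=P^*$. To place $\ell'$ inside the residual surface $S'$, I would exhibit $\ell'$ as a classical-topology limit of principal lines $\Phi_Q$ as $Q\to P^*$: by part~(a), $P^*$ is singular on $F_{P^*}$ and $\ell'$ is a branch through $P^*$, so the inflection tangents $\Phi_Q$ specialise into the tangent cone of $F_{P^*}$ at $P^*$, which contains $\ell'$. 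This puts $\ell'$ in the closure of the generic ruling of $\Ruledeight$, hence in $S'$. For part~(c), observe that $\deg(\pi|_\ell)=4$ and $|\operatorname{supp}(R_\ell)|\geq 5$ force two ramification points sharing a fiber $F_{P_2}$ to have ramification index exactly two (a single higher-index point would fill the fiber alone, leaving no room for a second ramified point in it while preserving $|\operatorname{supp}(R_\ell)|\geq 5$). Part~(a) then applies to both, and since they share $F_{P_2}$ they contribute the same tangent plane, so the linear form cutting out $T_{P_2}X_5$ divides the equation of $\Ruledeight$ twice, giving multiplicity $\geq 2$.

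The principal technical obstacle is the $m=2$ subcase of part~(a): ruling out a smooth flex of $F_{P^*}$ at a double ramification point. A clean alternative to the iterated limit argument would be a direct divisibility computation for the resultant $\operatorname{Res}_{z_1}(H_{11},H_4)$ appearing in the proof of Lemma~\ref{lemma-ruledoctic}, verifying case by case over the admissible local configurations near $P^*$ that the linear form defining $T_{P^*}X_5$ divides this resultant, which would yield the desired tangent-plane component of $\Ruledeight$.
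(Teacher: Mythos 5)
Your overall strategy for parts (a) and (b) is essentially the paper's: reduce (a) to the claim that a ramification point of multiplicity one or two is a \emph{singular} point of its residual fiber, then sweep out the tangent plane by the pencil of lines through that point, and obtain (b) by realizing the line $\ell'$ as a classical-topology limit of the principal lines $\Phi_Q$ as $Q\to P^*$, landing in the tangent cone of $F_{P^*}$ at $P^*$. The difference is that the paper does not prove the singularity claim from scratch; it invokes the dichotomy from \cite[\S~6]{Segre} (for a line of the second kind, $\ii(\tilde\ell,F_P;P)\geq 2$ forces $P\in\sing(F_P)$ or $\ii(\tilde\ell,F_P;P)\geq 4$), which disposes of $m=1$ and $m=2$ simultaneously since $\ii(\ell,F_{P^*};P^*)=m+1\leq 3$. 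Your limit argument is the right way to reprove this dichotomy, but, as you yourself flag, you do not close the $m=2$ case, and that is a genuine gap in the proposal as written. It is, however, a removable one: the degeneration you worry about is an artifact of measuring contact via $f\circ\Phi_Q$ on the ambient quintic (where $f|_\ell\equiv 0$). If instead you track $\ii(\Phi_Q,F_Q;Q)\geq 4$ --- the full strength of the second-kind condition from Prop.~\ref{lemm-lines-second-kind}, not merely $\geq 3$ --- for the pairs $(\Phi_Q,F_Q)$ inside the varying tangent planes, then, since $\ell$ is not a component of the residual quartic $F_{P^*}$, semicontinuity gives $\ii(\Lambda,F_{P^*};P^*)\geq 4$ for the limit line $\Lambda$; at a smooth point of $F_{P^*}$ the only line with contact $\geq 2$ is the tangent $\ell$, whose contact equals $m+1\leq 3$. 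One argument then handles both multiplicities, with no degeneracy.

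For part (c), your reduction to two \emph{simple} ramification points sharing one tangent plane is correct ($e_1+e_2\leq\deg(\pi|_\ell)=4$ forces $e_1=e_2=2$), but the concluding step ``they contribute the same tangent plane, so its linear form divides the equation of $\Ruledeight$ twice'' is asserted rather than proved: part (a) is a set-theoretic statement about components, and a plane arising as a component ``for two reasons'' does not formally yield multiplicity two in the defining resultant. To substantiate it you would need something like the product formula $\operatorname{Res}_{z_1}(H_{11},H_4)=\mathrm{lc}(H_4)^{\deg H_{11}}\prod_{H_4(\beta)=0}H_{11}(\beta,\cdot)$ together with the observation that two of the roots $\beta$ specialize to the two ramification points of the shared fiber, each corresponding factor vanishing along the common tangent plane. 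The paper instead checks $x_4^2\mid h_{11}$ by explicit computation over the parametrized families for each admissible ramification type (with a separate treatment of $([1,1]^3)$ and the exclusion of $(2,[1,1]^2)$). Your route is more conceptual, but it is incomplete at exactly this multiplicity step.
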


\begin{proof} a) Let $F_P$ be a fiber of \eqref{eq:fibration-deg-d}.
According to \cite[$\S$~6]{Segre}, if 
 $\ii(\tilde{\ell}, F_P;P) \geq 2$, then either $P \in \sing(F_P)$
or $\ii(\tilde{\ell}, F_P;P) \geq 4$. The latter is ruled out by the assumption on multiplicity, so the claim of the lemma follows
directly from the definition of the family
${\mathfrak S}_{\ell}$. 

b) We maintain the notation of the proof of Lemma~\ref{lem-1line} and assume that the line $\ell'$ runs through $P$ on $\XXp$.
  One can easily check that for each point $\tilde{P}$  from a (punctured) neighbourhood
of $P$ on $\ell$ (in the complex analytic topology) the hessian quadric $V_{\tilde{P}}$ meets the tangent plane $T_{\tilde{P}} \XXp$ properly (i.e. along two lines: $\ell$ and $\ell_{\tilde{P}}$). Moreover, the line $\ell'$
lies in the closure of the union of the lines $\ell_{\tilde{P}}$.  

c) The claim results from an elementary but tedious computation.  We maintain the notation of the proof of Lemma~\ref{lem-1line}, assume that the points   $P := (1:0:0:0)$, $P_2:= (0:1:0:0)$ 
belong to $\mbox{supp}(R_{\ell})$ 
and that the fiber $F_{P}$ (resp.  $F_{P_2}$) is the quartic curve residual to $\ell$ in the intersection of $\XXp$ with the plane $V(x_3)$ (resp. $V(x_4)$).

At first we assume that $P$, $P_2$ are simple ramification points and $P_3 := (1:1:0:0) \in \mbox{supp}(R_{\ell}) \cap F_{P_2}$ (i.e. $c_1 = c_2 = 1$, $d_1 \neq  d_2$ and $d_1 d_2 \neq 0$ in \eqref{eq-lem41}).
Then we apply \eqref{eq-rem2-3-condition}  to compute the map \eqref{eq-paraRl}, 
and we find  the defining polynomial $h_{11}$ of $\Ruledeight$ 
 using the explicit resultant approach from the proof of Lemma \ref{lemma-ruledoctic}.
Finally, we check that $h_{11}$ is divisible by $x_4^2$, 
so the claim follows for the ramification types $(1^4, [1,1])$, $(1^2, [1,1]^2)$, $(2,1^2,[1,1])$.

To deal with the ramification type  $([1,1]^3)$ we assume   $c_1 = c_2 = 1$  and $0\neq d_1 = d_2$ in \eqref{eq-lem41}. 
Using the discriminant one can check that 
 $([1,1]^3)$  occurs only for $d_1 = 2$, whereas the ramification type  $(2,[1,1]^2)$ is impossible.  Then for $d_1 = 2$  we repeat the reasoning we used to deal with the other cases,
to complete the proof of the claim.
\end{proof}


Lemma \ref{lemma-splitplane} has an immediate consequence for the valency of a line $\ell$ on a smooth quintic $X_5$
granted that all ramification points (relative to $\ell$) are simple:

\begin{cor} \label{cor-type111111} 
If $\ell \subset \XXp$ is a line of the second kind of ramification type $(1^6)$,
then $\ell$ is met by at most 24 other lines on $\XXp$:
\[
v(\ell)\leq 24.
\]
\end{cor}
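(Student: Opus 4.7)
The plan is to apply Lemma~\ref{lemma-splitplane}, using that the six distinct simple ramification points contribute six distinct tangent planes as components of Segre's surface of principal lines $\Ruledeight$, and then to bound $v(\ell)$ by a Bezout count on the residual surface.

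Write $R_\ell = P_1 + \cdots + P_6$. Since the type is $(1^6)$, the six ramification points lie in pairwise distinct fibers of $\pi$, so the tangent planes $T_{P_j}\XXp$ are six distinct planes of the pencil $|\mathcal O_{\XXp}(1) - \ell|$. By Lemma~\ref{lemma-splitplane}(a) each of them is a component of $\Ruledeight$, and since $\deg \Ruledeight \leq 11$ by Lemma~\ref{lemma-ruledoctic}, the residual surface
\[
\Sigma := \Ruledeight - \sum_{j=1}^6 T_{P_j}\XXp
\]
has degree at most $11-6 = 5$.

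Next I would verify that every line $\ell' \neq \ell$ on $\XXp$ meeting $\ell$ is contained in $\Sigma$. The key observation is that any two distinct planes of the pencil through $\ell$ intersect only along $\ell$, so no line other than $\ell$ can lie in two of the planes $T_{P_j}\XXp$ simultaneously. If $\ell'$ meets $\ell$ at a non-ramification point $P$, then $\ell' \subset T_P\XXp$, and $T_P\XXp$ differs from every $T_{P_j}\XXp$; hence $\ell'$ lies in none of them, and must lie in $\Sigma$. If $\ell'$ passes through a simple ramification point $P_j$, then by Lemma~\ref{lemma-splitplane}(b) it lies in the residual $\Ruledeight - T_{P_j}\XXp$, and the pencil argument rules out $\ell' \subset T_{P_k}\XXp$ for $k \neq j$, so again $\ell' \subset \Sigma$.

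Finally, $\ell$ itself lies in $\Sigma$ (every point of $\ell$ sits on the principal line through it, which is a ruling of $\Sigma$), and by Prop.~\ref{lemm-fundamental} together with Lemma~\ref{lem-1line} at most one line other than $\ell$ passes through any point of $\ell$, so the $v(\ell)$ lines incident to $\ell$ are pairwise distinct and distinct from $\ell$. Thus $\ell$ together with them contribute $v(\ell)+1$ pairwise distinct components to $\Sigma \cdot \XXp$, which has degree at most $5 \cdot 5 = 25$. This forces $v(\ell)+1 \leq 25$, i.e.\ $v(\ell) \leq 24$. The main technical concern is ensuring that each tangent plane $T_{P_j}\XXp$ enters $\Ruledeight$ with multiplicity exactly one so that the estimate $\deg \Sigma \leq 5$ is genuine; Lemma~\ref{lemma-splitplane}(c) indicates that this multiplicity can jump only when two ramification points share a fiber, which is precisely excluded by the non-degenerate type $(1^6)$.
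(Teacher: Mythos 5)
Your overall strategy is the paper's: remove the tangent planes at the ramification points from Segre's surface $\Ruledeight$, observe that the residual surface $\Sigma$ of degree at most $5$ contains $\ell$ and all lines meeting it, and conclude $v(\ell)+1\leq 25$ by Bezout. However, there is a genuine gap in the step where you place the $\ell$-unramified lines inside $\Sigma$. You assert that for a non-ramification point $P$ the plane $T_P\XXp$ ``differs from every $T_{P_j}\XXp$''. This is false: the map $P\mapsto T_P\XXp$ is exactly $\pi|_\ell$, which has degree $4$, so the fiber $F_{P_j}$ meets $\ell$ in two further, unramified points besides the doubled $P_j$, and these points have tangent plane equal to $T_{P_j}\XXp$. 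A line of $F_{P_j}$ through such a point is an $\ell$-unramified line contained in one of the six planes you delete, so your argument does not show it survives into $\Sigma$. The correct justification (implicit in the proofs of Prop.~\ref{lemm-fundamental} and Lemma~\ref{lemma-splitplane}.b) is that any line $\ell'\neq\ell$ through an unramified point $P$ is the principal line $\ell_P$, the residual intersection of $T_P\XXp$ with the Hessian quadric $V_P$; the closure of the one-parameter family $\{\ell_{\tilde P}\}$ is a union of components of $\Ruledeight$, none of which can be a plane of the pencil through $\ell$ (two such planes meet only along $\ell$, so a plane component would force $\ell_{\tilde P}=\ell$ for a one-dimensional family of $\tilde P$). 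Hence these lines lie in $\Sigma$ for that reason, not for the one you give.

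A second, smaller issue is scope. In this paper the type $(1^6)$ must be read as including the degenerate subtypes $(1^4,[1,1])$, $(1^2,[1,1]^2)$, $([1,1]^3)$ — otherwise the case analysis for Theorem~\ref{thm1} would be incomplete — and you explicitly exclude these by assuming the six ramification points lie in distinct fibers. Your closing remark also inverts the role of Lemma~\ref{lemma-splitplane}.c: a tangent plane entering $\Ruledeight$ with multiplicity greater than one is harmless (it only lowers $\deg\Sigma$); the actual danger in the degenerate cases is that there are fewer than six \emph{distinct} tangent planes, and part (c) is needed precisely to guarantee that a shared plane occurs with multiplicity at least two, so that the residual surface still has degree at most $5$. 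With these two repairs your argument matches the paper's proof.
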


\begin{proof}
By Lemma~\ref{lemma-splitplane}  all lines that meet $\ell$ on $\XXp$  lie on a degree-$5$ surface contained in $\Ruledeight$
(residual to the six distinct tangent planes at the ramification points).
The bound then follows straight from Bezout's theorem.
\end{proof}

In the remaining case of ramification type $(2,1^4)$,
we shall see that Segre's surface of principal lines still yields a bound that  is sufficient for our purposes.
However, one has to carry out 
a more detailed analysis of its behaviour. 
For this purpose, we put $\Ruledeight^r$  to denote the surface residual in  $\Ruledeight$ to 
the union of tangent planes of $\XXp$ in ramification points.

\begin{lemm} \label{lemma-splitplane-21}
Let $\ell \subset \XXp$ be a line of the second kind  of ramification type $(2,1^4)$ and let
$P$ appear in  $R_{\ell}$ with multiplicity two. 
\begin{enumerate}
\item[a)] 
If there are 
more than two lines through $P$ on $\XXp$, then $$\deg(\Ruledeight^r) \leq 5.$$ 
\item[b)]
 If 
$\deg(\Ruledeight^r) = 6$, then $\Ruledeight^r$ and $\XXp$ meet along the line $\ell$ with 
multiplicity $\geq 3$.
\end{enumerate}
\end{lemm}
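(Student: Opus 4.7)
For part~(a), I would follow the explicit resultant strategy used in the proof of Lemma~\ref{lemma-splitplane}(c). Normalize coordinates so that $P=(1{:}0{:}0{:}0)$ is the double ramification point, $\ell=V(x_3,x_4)$, and $T_P\XXp=V(x_3)$. The assumption that more than two lines of $\XXp$ pass through $P$ amounts to the residual quartic $F_P\subset V(x_3)$ splitting off at least two linear components through $P$ in addition to $\ell$ -- a closed condition on the coefficients $\alpha_{i,j,k,l}$ of $f$. Combined with the defining conditions for ramification type $(2,1^4)$ at $P$ and the second-kind condition \eqref{eq-rem2-3-condition}, this cuts out a parameter space of relevant quintics. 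For each such $\XXp$ I would form the bihomogeneous polynomials $H_{11}$, $H_4$ from the proof of Lemma~\ref{lemma-ruledoctic} and verify, with computer-algebra assistance, that $x_3^2 \mid h_{11}$. This shows $T_P\XXp$ appears with multiplicity at least $2$ in $\Ruledeight$, and combined with Lemma~\ref{lemma-splitplane}(a) applied to the four simple ramification points (each contributing a simple tangent-plane component to $\Ruledeight$), this yields
\[
\deg(\Ruledeight^r) \leq 11 - 4 - 2 = 5.
\]

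For part~(b), my argument will be geometric. The key observation is that every principal line $\tilde\ell_{P'}\neq\ell$ based at a point $P'\in\ell$ meets $\XXp$ at $P'$ with multiplicity at least $3$: indeed, $\tilde\ell_{P'}\subset T_{P'}\XXp$ forces $\tilde\ell_{P'}\cap\XXp\subset T_{P'}\XXp\cap\XXp=\ell+F_{P'}$, and the local contribution at $P'$ collects $\geq 1$ from the transverse crossing with $\ell$ plus $\geq 2$ from the defining tangency $\ii(\tilde\ell_{P'},F_{P'};P')\geq 2$. Under the hypothesis $\deg(\Ruledeight^r)=6$, the residual surface $\Ruledeight^r$ must coincide with the sweep of the principal lines, which I would parametrize near a generic point of $\ell$ by $(t,s)\mapsto P_t+s\cdot w(t)$, where $P_t\in\ell$ and $w(t)$ is the direction of $\tilde\ell_{P_t}$, so that $\ell$ corresponds to $\{s=0\}$. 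Restricting the defining polynomial $f$ of $\XXp$ to a ruling then gives a polynomial in $s$ vanishing to order $\geq 3$ at $s=0$, by the previous observation. Hence $f|_{\Ruledeight^r}$ vanishes to order $\geq 3$ along $\ell$, yielding the claimed intersection multiplicity.

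\textbf{Main obstacle.} Part~(a) is conceptually routine but computationally demanding; the delicate step is correctly identifying the normal form for quintics with the prescribed line and ramification data at $P$ and marshalling the resultant computation through a computer algebra system. Part~(b) is conceptually cleaner, and its crux is the observation that \emph{every} principal line meets $\XXp$ at its attachment point on $\ell$ with multiplicity $\geq 3$; one still has to justify carefully that under the degree hypothesis, $\Ruledeight^r$ really is birationally swept by these principal lines so that the local order of vanishing transfers from the individual rulings to the global intersection divisor on $\PP^3$.
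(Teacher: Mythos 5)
Your part (a) follows essentially the paper's own route: normalize so that $P=(1{:}0{:}0{:}0)$ is the double ramification point, observe that more than two lines through $P$ forces the tangent-cone coefficient $\alpha_{0221}$ to vanish (Remark~\ref{rem-double-point}.a), and then check via the explicit resultant that $h_{11}$ picks up an extra factor of $x_3$, giving $\deg(\Ruledeight^r)\leq 11-2-4=5$. The only thing you omit is the degenerate sub-case where two ramification points lie in one fibre (the paper notes the extra divisibility by $x_3x_4$ there), which is a minor bookkeeping point.

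Part (b) is where you genuinely diverge, and your key observation is correct and even improvable: for a line of the second kind the generic $P'\in\ell$ is a flex of $F_{P'}$, so $\ii(\tilde\ell_{P'},F_{P'};P')\geq 3$ and the principal line meets $X_5$ at $P'$ with multiplicity $\geq 4$, not just $3$. However, the transfer step ``hence $f|_{\Ruledeight^r}$ vanishes to order $\geq 3$ along $\ell$'' is exactly where the substance lies, and you have only flagged it. Since $\ell$ is a singular (double) curve of $\Ruledeight^r$, the multiplicity of $\ell$ in the cycle $X_5\cdot\Ruledeight^r$ is a length of a local ring at the generic point of $\ell$, and identifying it with the order of vanishing of $f$ along the base section $\{s=0\}$ of your ruling requires: (i) knowing that every component of $V(h_{11})$ residual to the tangent planes is (a multiple of) the sweep $W$ of the principal lines at unramified points — multiplicities only help, but extraneous resultant factors must be excluded; (ii) generic injectivity of the parametrization $\PP^1\times\ell\dashrightarrow W$, since for a degree-$k$ parametrization the pushforward formula divides your bound by $k$ (one can argue that $k\geq 2$ forces $W$ to be a plane, which then needs separate, easy, treatment); and (iii) the standard length formula relating orders along $\ell$ on the non-normal $W$ to orders on its normalization. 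All of this can be supplied, so I would call your plan viable rather than wrong, but it is not yet a proof. The paper avoids these issues entirely by a direct computation: it shows $h_9=h_{11}/(x_3x_4)$ lies in $\langle x_3,x_4\rangle^5$ and, cutting with the pencil $H_t=V(x_1-tx_2)$, verifies that $T_{P'}X_5\cap H_t$ is a component of the tangent cone of $H_t\cap S_9$, whence $X_5\cdot S_9$ contains $\ell$ with multiplicity at least six; subtracting the three residual tangent planes gives the claim.
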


\begin{proof} We maintain the notation of the proof of Lemma~\ref{lem-1line}, put $d_1=0$ 
(i.e. $P=(1:0:0:0)$ is the double ramification point),
and apply \eqref{eq-rem2-3-condition}  to find the parametrization \eqref{eq-paraRl}. 
We use the resultant to compute the polynomial $h_{11}$ that defines Segre's surface $\Ruledeight$ 
(as indicated in the proofs of Lemmata \ref{lemma-ruledoctic}, \ref{lemma-splitplane}).
It is divisible by $x_3 x_4$,
so set
\[
h_9 = h_{11}/(x_3x_4) \;\;  \text{ and define the auxiliary surface } \;\; S_9 = V(h_9)\supset S_{11}^r.
\]

a)  The assumption on the number of lines through $P$ implies that $\alpha_{0221}$ vanishes (cf.\ Remark~\ref{rem-double-point}.a). 
By direct check, the polynomial $h_9$ is divisible by $x_3$ (resp.\ by $x_3 x_4$  when $V(x_4)$ contains two ramification points).
The claim thus follows from  Lemma~\ref{lemma-splitplane}.a. 

b) In general, 
one checks that the polynomial  $h_9$ belongs to the fifth power of the ideal $\langle x_3, x_4\rangle$. 
Thus, if 
 $\Ruledeight^r$ is a sextic, then it is singular along the line $\ell$ (this gives intersection multiplicity at least two). 
 The claim will be proved 
 by studying    
the intersection number of appropriate hyperplane sections of $\XXp$ and $S_9$.
In detail, we cut $\XXp$ and $S_9$ with  the hyperplane $H_t:=V(x_1 - t x_2)$ that meets  $\ell$  transversally in the point $P'=(t:1:0:0)$). By direct check, the 
line $T_{P'}\XXp \cap H_t $ is always a component of the tangent cone of the curve $H_t \cap S_9$,
so the intersection multiplicity of $\XXp$ and $S_9$ along $\ell$ does exceed five. 
Since $\Ruledeight^r$ is a sextic by assumption,  the hypersurface  $S_9$ contains exactly three planes, 
each of which meets $\XXp$ along $\ell$ with multiplicity one. Thus the claim of Lemma~\ref{lemma-splitplane}.b follows.
\end{proof}

It should be pointed out that 
for a double ramification point
 the statement of Lemma~\ref{lemma-splitplane}.b does not carry over
(i.e. sometimes
the surface $S_{11}^r$  does not contain all lines through the point in question). 
 Yet Bezout's theorem
 (using Lemma \ref{lemma-splitplane-21}) immediately gives the following for ramification type $(2,1^4)$.

\begin{cor} \label{cor-type21111} 
If $\ell \subset \XXp$ is a line of the second kind of  ramification type $(2,1^4)$,
then $\ell$ is met by at most 28 other lines on $\XXp$:
\[
v(\ell)\leq 28.
\]
\end{cor}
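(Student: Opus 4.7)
The plan is to split the lines meeting $\ell$ into three groups according to whether they pass through the double ramification point $P$, through one of the four simple ramification points, or through an unramified point of $\ell$. Writing $n_1, n_2, n_3$ respectively for the number of such lines (other than $\ell$), one has $v(\ell) = n_1+n_2+n_3$. Lemma~\ref{lem-1line} applied to each of the four simple ramification points gives $n_2 \leq 4$, while the general coplanarity bound at an $n$-fold ramification point yields $n_1 \leq 3$ at the double point $P$.

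To bound $n_2+n_3$ I use Segre's surface of principal lines. Every line on $\XXp$ meeting $\ell$ either at an unramified point (by the very definition of $\mathfrak{S}_\ell$, since it is then a principal line) or at a simple ramification point (by Lemma~\ref{lemma-splitplane}(b)) is contained in $\Ruledeight^r$; indeed such a line lies in only one of the tangent planes $T_QX_5$ at a ramification point (namely the one at its point of intersection with $\ell$, if any), so removing all five of them from $\Ruledeight$ does not remove the line. By Lemma~\ref{lemma-splitplane}(a) each of the five tangent planes at the ramification points is a component of $\Ruledeight$, whence $\deg(\Ruledeight^r)\leq 6$. Writing $m\geq 0$ for the multiplicity of $\ell$ in the intersection cycle $\Ruledeight^r\cdot \XXp$, Bezout's theorem gives
\[
n_2+n_3 \;\leq\; 5\deg(\Ruledeight^r) - m.
\]

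The conclusion then follows from a case split using Lemma~\ref{lemma-splitplane-21}. If $\deg(\Ruledeight^r)\leq 5$, the Bezout bound yields $n_2+n_3\leq 25$; combined with $n_1\leq 3$ this gives $v(\ell)\leq 28$. Otherwise $\deg(\Ruledeight^r)=6$, in which case the contrapositive of Lemma~\ref{lemma-splitplane-21}(a) forces at most two lines through $P$ on $\XXp$, so $n_1\leq 1$, while Lemma~\ref{lemma-splitplane-21}(b) forces $m\geq 3$; hence $n_2+n_3\leq 30-3=27$ and again $v(\ell)\leq 1+27 = 28$.

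The main obstacle is the verification that the lines we count really do sit on $\Ruledeight^r$ and not just on $\Ruledeight$: one has to exclude the possibility that a line through a simple ramification point $Q$ is accidentally absorbed into some other tangent plane $T_{Q'}X_5$ with $Q'\neq Q$, which would require both $Q,Q'$ to lie on that line, forcing it to be $\ell$. A secondary subtlety is ensuring the count of tangent planes contributing to $\Ruledeight$ remains valid in degenerate subcases (e.g.\ when two ramification points share a fiber, so two of the $T_QX_5$ coincide), where Lemma~\ref{lemma-splitplane}(c) recovers the same bound on $\deg(\Ruledeight^r)$ via higher multiplicity. Once these are settled, the Bezout bookkeeping above is routine.
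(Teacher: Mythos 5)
Your argument is correct and is essentially the paper's own: the paper deduces the corollary from Bezout's theorem applied to $\Ruledeight^r$ together with Lemma~\ref{lemma-splitplane} and the two cases of Lemma~\ref{lemma-splitplane-21}, exactly as you do (degree $\leq 5$ with $n_1\leq 3$, versus degree $6$ with $n_1\leq 1$ and intersection multiplicity $\geq 3$ along $\ell$). The only imprecision is in your "main obstacle" paragraph: a line $\ell'\neq\ell$ on $\XXp$ contained in $T_{Q'}\XXp$ is a component of the fiber $F_{Q'}$ and hence meets $\ell$ inside that fiber, so the correct reason it cannot pass through a ramification point $Q\neq Q'$ (non-degenerate case) is that $Q$ and $Q'$ would have to lie in the same fiber, not that $Q'$ would have to lie on $\ell'$ — but this does not affect the conclusion.
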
 

For completeness, we emphasize that  together with Corollary \ref{cor-type111111} this covers the two cases
missing from Section \ref{sect-sec-kind-quintics} for the proof of Theorem \ref{thm1}.

\begin{rem} 
\label{rem:deg9}
As explored for quartics in \cite{rs-advgeom},
one can construct examples of quintics with lines of other ramification types and   $\Ruledeight^r$ of  degree up to $9$. 
For such surfaces 
the approach we just followed yields weaker bounds for the valency than those needed for the proof of Theorem \ref{thm1}.  
In the next section we will overcome this subtlety using the results of  Sect.~\ref{sect-segre-functions}.
\end{rem}

\section{Counting lines along a line on a quintic surface}

In this section we can finally come to the problem that was our original motivation. 
We complete the proof of Theorem \ref{thm1} and  show that the given bound for the valency is in fact sharp 
(Example~\ref{example-3squared-28}).
%
Before the proof, let us formulate an immediate consequence of Theorem~\ref{thm1}
which already points towards Theorem \ref{thm2}:

\begin{cor} \label{cor-125}
If a smooth quintic surface contains five coplanar lines, then it contains at most $125$ lines. 
\end{cor}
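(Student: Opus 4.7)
The plan is to reduce the statement to a direct application of Theorem \ref{thm1} via an elementary counting argument based on the hyperplane section.

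First I would let $\ell_1,\ldots,\ell_5\subset X_5$ denote the five coplanar lines, contained in a plane $H\subset\PP^3$. Since $H\cap X_5$ is a plane quintic curve of degree $5$ already containing the sum $\ell_1+\cdots+\ell_5$, we must have the equality of divisors
\[
H\cap X_5=\ell_1+\ell_2+\ell_3+\ell_4+\ell_5.
\]
In particular the five lines are pairwise distinct components of this hyperplane section, so each $\ell_i$ meets each $\ell_j$ (for $j\neq i$) in a point of $H$; thus the other four coplanar lines already contribute $4$ to the valency $v(\ell_i)$.

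Next, let $\ell'\subset X_5$ be any line distinct from $\ell_1,\ldots,\ell_5$. Then $\ell'$ cannot lie in $H$, since the only lines of $X_5$ contained in $H$ are components of $H\cap X_5$, i.e.\ the five $\ell_i$. Consequently $\ell'$ intersects $H$ in exactly one point, and that point lies in $H\cap X_5=\ell_1\cup\cdots\cup\ell_5$. Therefore $\ell'$ meets at least one of the five coplanar lines.

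Now I would apply Theorem \ref{thm1}: each $\ell_i$ satisfies $v(\ell_i)\leq 28$. Since four of the $28$ slots are occupied by the other coplanar lines, at most $28-4=24$ lines meeting $\ell_i$ are distinct from $\ell_1,\ldots,\ell_5$. Combining with the observation above via the union bound, the number of lines on $X_5$ different from $\ell_1,\ldots,\ell_5$ is at most $5\cdot 24=120$, and adding back the five coplanar lines yields the bound $5+120=125$. There is no real obstacle here — the only subtlety is remembering to subtract the $4$ already-accounted-for coplanar intersections from the valency bound, which is precisely what makes the arithmetic work out to exactly $125$ rather than a weaker $5+5\cdot 28=145$.
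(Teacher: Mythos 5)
Your argument is correct and is essentially identical to the paper's own proof: every other line must meet the plane spanned by the five lines inside $X_5$, hence meets one of the five, and Theorem \ref{thm1} with the correction $28-4=24$ gives $5+5\cdot 24=125$. No issues.
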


\begin{proof}[Proof of Corollary \ref{cor-125}]
Any line in $\PP^3$ meets the hyperplane which decomposes into the 5 given lines when intersected with the quintic $X_5$.
Thanks to Theorem \ref{thm1}, the total number of lines on $X_5$ is thus bounded by $5+5\cdot(28-4) = 125$ as claimed.
\end{proof}

\subsection{Proof of Theorem~\ref{thm1}}
Let $\ell \subset \XXp$ be a  line. 
By Prop.~\ref{prop-sfk} and Corollaries~\ref{cor-type111111},~\ref{cor-type21111},  we can assume that $\ell$ is of the second kind and 
$\mbox{supp}(R_{\ell})$ consists of at most four points (as given in \eqref{eq-rrl-five}). 
The proof is based on the case-by-case study of the $\sss_k$-functions from Def.~\ref{def-bf} 
for each possible ramification type. 

Without loss of generality we assume that the quintic surface $\XXp$ is given by 
\eqref{eq-expansion-d}
and  $\ell = V(x_3, x_4)$. 

Recall that Lemma \ref{lemm-lines-second-kind} implies that  the rational function  $\sss_0$ vanishes identically.
For $j=1,2$ we define the polynomials $\tilde{\sss}_j$, $\ddd_j$ $\in \kk[x_2]$  by the formula 
\eqref{eq-fraction-decomp}.
In order to apply Prop.~\ref{lemm-fundamental},
we have to check the condition $\partial f/\partial x_4\neq 0$ on $\ell$.
Presently, one finds that
\[
V\left(\frac{\partial f}{\partial x_4}\right) \cap \ell = F_{P_1} \cap \ell \subset V(x_3).
\]
Hence all the lines on $\XXp$ intersecting $\ell$ are either
\begin{itemize}
\item
contained in the plane $V(x_3)$ or
\item
$\ell$-ramified outside the plane $V(x_3)$ 
(so that restrictions imposed by the index of ramification apply, including Lemma \ref{lem-1line})
or
\item
$\ell$-unramified outside the plane $V(x_3)$
(so that the total number is bounded by both $\deg(\tilde{\sss}_1)$ and $\deg(\tilde{\sss}_2)$).
\end{itemize}
In what follows, we will balance out these three cases against each other
in order to prove the valency bound from Theorem \ref{thm1}.
%
%

\subsection{Triple ramification points}
\label{ss:triple}

At first we study the ramification types with a triple point and assume that  $\XXp$ lies in the image of  
${\mathcal W} \times \kk^{16}$ under the parametrization \eqref{eq-paraRl} where ${\mathcal W} \subset \kk^6$ is one of the affine subspaces
we discussed in \ref{example-one-triple-point} and \ref{example-one-triple-point-b}.

\subsubsection{Ramification type $(3^2)$} We maintain the notation of \ref{example-one-triple-point}. Observe that the fiber $F_{P_1}$ 
meets $\ell$ only in 
$P_1$, so $F_{P_1}$  contains no $\ell$-unramified lines.
On the other hand, all $\ell$-ramified lines (if any) are contained in the fibres $F_{P_1}, F_{P_2}$.
 
Suppose that $\sss_1 \not\equiv 0$. Since, by direct computation, we have  
$$\deg(\tilde{\sss}_1) \leq 17,$$ 
and there are at most four  lines in the fibers $F_{P_1}$ (resp. $F_{P_2}$), 
Prop.~\ref{lemm-fundamental} yields that 
the line  $\ell$ is met by at most 25 other lines on $\XXp$.

Assume that $\sss_1 \equiv 0$.
We have $\deg(\tilde{\sss}_2) \leq 28$, so the valency bound stated in Theorem \ref{thm1}  
follows directly from  Prop.~\ref{lemm-fundamental} 
provided the fibers $F_{P_1}$, $F_{P_2}$ contain no lines. 
To complete the proof in this case, it remains to study $\tilde{\sss}_2$ more precisely. 
We have (with short-hand notation t.o.d. = terms of degree)
$$
\tilde{\sss}_2 = \alpha_{2021} \cdot (\mbox{t.o.d. } 28, 26)  +  (\mbox{t.o.d. }24, \hdots, 4) + \alpha_{0212} \cdot (\mbox{t.o.d. } 3, 0) 
$$
Therefore, if one of the fibers $F_{P_1}$, $F_{P_2}$ 
contains a line $\tilde{\ell} \neq \ell$, then by Remark~\ref{rem-triple-points} and 
 Prop.~\ref{lemm-fundamental},
we have  $\alpha_{0212}=0$ or $ \alpha_{2021}=0$, and the line $\ell$ is met by at most $24$
$\ell$-unramified lines (resp.~at most $20$ if both fibers contain a line). 
This completes the proof of Theorem \ref{thm1} for ramification type $(3^2)$. 


\subsubsection{Ramification type $(3,2,1)$} 
We consider the case $c=0$ in \ref{example-one-triple-point-b} and check that the denominator of 
$\sss_j$ divides the product $(x_1^4 (x_1-x_2)^5 x_2^{19})$ for   $j=1,2$
 (here the factor $(x_1-x_2)^5$ comes from the vanishing of the partial 
$\partial f/\partial x_4$, whereas the factor given by the simple ramification point $P_3$ 
(cf. Remark~\ref{rem-triple-point}.a) cancels out).

Assume $\sss_1 \not\equiv 0$. 
One can easily check that  $\deg(\tilde{\sss}_1) \leq 20$. Moreover, if 
$F_{P_1}$ contains an $\ell$-unramified line (i.e. a line through $(1:1:0:0)$)
then
$\deg(\tilde{\sss}_1)$ cannot exceed $19$.
Since the fiber  $F_{P_1}$ (resp. $F_{P_2}$) contains at most three 
(resp. four) lines through the point $P_1$ (resp.~$P_2$),
 Prop.~\ref{lemm-fundamental} combined with Lemma \ref{lem-1line} and Remark~\ref{rem-triple-point}.a
implies that $\ell$ is met by at most $28$ other lines on $\XXp$.  

If  $\sss_1 \equiv 0$, then $\alpha_{0221} = 0$  and the degree  of $\tilde{\sss}_2$ is at most $24$. 
Moreover, vanishing of  $\alpha_{2021}$, $\alpha_{4001}$ results in  $\deg(\tilde{\sss}_2) \leq 20$, whereas the existence of 
an $\ell$-unramified line in $F_{P_1}$
diminishes $\deg(\tilde{\sss}_2)$ by one. The required bound 
thus results from  Prop.~\ref{lemm-fundamental} 
combined with Remark~\ref{rem-triple-point}.a.  

\subsubsection{Ramification type $(3,1^3)$} Again, we maintain the notation of \ref{example-one-triple-point-b}
and fix any $c\in\kk$ that does not result in the ramification type $(3,2,1)$. We obtain
$$
\deg(\tilde{\sss}_1) \leq 17  \;\;\; \mbox{ and }  \;\; \deg(\tilde{\sss}_2) \leq 24. 
$$
If $\tilde{\sss}_1 \not\equiv 0$, then the claim follows directly from Lemma~\ref{lem-1line} (indeed, we have at most eight lines in the fibers $F_{P_1}$, $F_{P_2}$ and at most one line through each of the other two simple ramification points). 
Otherwise, we use Remark~\ref{rem-triple-point}.b to check that   $\deg(\tilde{\sss}_2) \leq 20$
whenever $F_{P_2}$ contains  a line through 
$P_2$.  

\subsection{Double ramification points}
It remains to deal with the ramification types with two double points. 
As in \ref{ss:triple}, we
assume that  $\XXp$ lies in the image of  
${\mathcal W} \times \kk^{16}$ under the parametrization \eqref{eq-paraRl}, where ${\mathcal W} \subset \kk^6$ is one of the affine subspaces
we discussed in \ref{example-double-point}. 

We maintain the notation of \ref{example-double-point}. Observe that
the partial derivative $\partial f/\partial x_4$ vanishes in $P_1$  and in the point $(1:c:0:0)\in F_{P_1}$
(thus the latter is the only unramified point where the first assumption of Prop.~\ref{lemm-fundamental} is not fulfilled
(but the second is)) . 


\subsubsection{Ramification type $(2^3)$} \label{subsec-twothree} We assume $c=4$ in the formulas from \ref{example-double-point}. 
 
Suppose that $\sss_1 \not\equiv 0$. 
By direct computation we have $\deg(\tilde{\sss}_1) \leq 26$ (resp. $\deg(\tilde{\sss}_1) \leq 25$ 
when there is an extra  line through $(1:4:0:0)$).
Moreover, if there are at least two extra lines through $P_2$, then Remark~\ref{rem-double-point}.a 
yields $\alpha_{2012}=0$ and one gets $\deg(\tilde{\sss}_1) \leq 23$. More generally, one can check
that if  there are at least  two lines $\neq \ell$  through each of $k$ ramification points,  than 
 $\deg(\tilde{\sss}_1) \leq 26 - 3k$.  
Consequently, Prop.~\ref{lemm-fundamental} yields the required bound
for the valency from Theorem \ref{thm1} provided  $\sss_1 \not\equiv 0$ and either $\ell$ is the unique line through the ramification point
$P_1$ or there are at least three lines on $\XXp$ which  contain the point $P_1$.

It remains to discuss the case when there are exactly $26$ $\ell$-unramified lines  and exactly one line $\neq \ell$ runs through each  point $P_j$ for $j = 1, \ldots, 3$. 
By  Prop.~\ref{lemm-fundamental} every root of $\tilde{\sss}_1$ must be a root of   $\tilde{\sss}_2$, so the remainder of
the division of  $\tilde{\sss}_2$ by  $\tilde{\sss}_1$ vanishes. An elementary (but tedious) computation
yields $\alpha_{2012}=0$ and  at most $23$ $\ell$-unramified lines. Contradiction.

Finally suppose that $\sss_1 \equiv 0$. Then one can easily see that  $\alpha_{2012}$, $\alpha_{0221}$,  $\alpha_{4001}$, $\alpha_{4010}$,  $\alpha_{0410}$, $\alpha_{0401}$ vanish. By Remark~\ref{rem-double-point}.b
none of the fibers  $F_{P_1}$, $F_{P_2}$ contains a line. Moreover, we have
 $\deg(\tilde{\sss}_2) \leq 24$, so this completes the proof of Theorem \ref{thm1} for ramification type $(2^3)$.

\subsubsection{Ramification type $(2^2,1^2)$}
 We assume that $c \neq 0,1,4,-8$ in \ref{example-double-point} 
 and put  $P_3$, $P_4$ to denote the ramification points of index $1$ (they can be found with the help of $\mathfrak{d}_2(f)$). Recall that, by Lemma~\ref{lem-1line}, there is at most one line $\neq \ell$ through $P_3$ (resp. $P_4$). 
Moreover, one can easily check that once there is a line (different from $\ell$) on $\XXp$ through $(1:c:0:0)$,
the degree of $\tilde{\sss}_1$ drops by one.

Suppose that $\sss_1 \not\equiv 0$. We have  $\deg(\tilde{\sss}_1) \leq 23$. 
If neither $\alpha_{2012}$ nor $\alpha_{0221}$ vanishes, the bound $v\leq 28$ follows from 
Remark~\ref{rem-double-point}.a. 
In the remaining cases, a direct computation shows that if  $\alpha_{2012} = 0$ (resp.  $\alpha_{0221} = 0$), 
then we have $\deg(\tilde{\sss}_1) \leq 20$ (resp. $\tilde{\sss}_1$ is divisible by 
$x_2^3$). Prop.~\ref{lemm-fundamental} thus yields the claim again.

Eventually we are led to assume $\sss_1 \equiv 0$. Then  $\alpha_{2012}$, $\alpha_{0221}$,  $\alpha_{4001}$, $\alpha_{4010}$,  $\alpha_{0410}$, $\alpha_{0401}$ vanish and we have
$\deg(\tilde{\sss}_2) \leq 20$. 
Remark~\ref{rem-double-point}.b
yields that there are no lines $\neq \ell$ through $P_1$, $P_2$, which completes the proof of Theorem \ref{thm1}
for non-degenerate ramification type  $(2^2,1^2)$.
The proof for the degenerate type follows the same lines, so we omit it for the sake of brevity. 
\qed

\subsection{Record valency}

In the above part of the proof of Thm~\ref{thm1} we did not analyze (very precisely) the behaviour of $\sss_2$ under the assumption that $\sss_1$ vanishes along $\ell$.
Such an analysis (the details of which we omit) yields the following corollary.

\begin{cor} \label{cor-28abc}
Let $\XXp \subset \PP^3$ be a smooth quintic surface and let $\ell \subset \XXp$ be a  line. If $\ell$ is met by 
$28$ other lines on $\XXp$, then one of the following holds:
\begin{itemize}
\item[$(28_a)$]
the line $\ell$ is of the type $(3^2)$ and $\sss_1$ vanishes along $\ell$, 
\item[$(28_b)$]  the line $\ell$ is of the type $(2^3)$, and $\sss_1$ does not vanish along  $\ell$,
\item[$(28_c)$]  the line $\ell$ is of the type $(2,1^4)$.
\end{itemize}
\end{cor}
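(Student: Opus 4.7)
The plan is to sieve the case-by-case analysis carried out in the proof of Theorem \ref{thm1} and pin down exactly when $v(\ell) = 28$ is permitted. First I collect from that proof the cases where the strict inequality $v(\ell) < 28$ has already been obtained: lines of the first kind satisfy $v(\ell) \leq 26$ by Proposition \ref{prop-sfk}; ramification type $(1^6)$ yields $v(\ell) \leq 24$ by Corollary \ref{cor-type111111}; and the analysis of Section \ref{sect-sec-kind-quintics} delivers $v(\ell) \leq 25$ for $(3^2)$ with $\sss_1 \not\equiv 0$, $v(\ell) \leq 24$ for $(3, 1^3)$ and for $(2^3)$ with $\sss_1 \equiv 0$, and an even sharper bound for $(2^2, 1^2)$ with $\sss_1 \equiv 0$. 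The three configurations $(28_a)$, $(28_b)$, $(28_c)$ in the statement are precisely those where the proof of Theorem \ref{thm1} admitted the value $v(\ell) = 28$ without further reduction; what remains is therefore to rule out $v(\ell) = 28$ for the ramification types $(3, 2, 1)$ with $\sss_1 \not\equiv 0$, $(2^2, 1^2)$ with $\sss_1 \not\equiv 0$, and $(3, 2, 1)$ with $\sss_1 \equiv 0$.

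For each of these residual cases I would follow the template used in Subsection \ref{subsec-twothree} for $(2^3)$ with $\sss_1 \not\equiv 0$. Suppose towards a contradiction that $v(\ell) = 28$ is attained. Because the a-priori bound established in the proof of Theorem \ref{thm1} is exactly 28 here, equality must be forced simultaneously in the bound on $\deg(\tilde{\sss}_1)$ (respectively $\deg(\tilde{\sss}_2)$ when $\sss_1 \equiv 0$) and in the tangent-cone bounds from Lemma \ref{lem-1line} and Remarks \ref{rem-triple-point}, \ref{rem-double-point} on the number of ramified lines through each ramification point. Under this extremal hypothesis every zero of $\tilde{\sss}_1$ corresponds to an $\ell$-unramified line on $\XXp$, so by Proposition \ref{lemm-fundamental} the polynomial $\tilde{\sss}_2$ must vanish at each such zero; equivalently, the remainder of the division $\tilde{\sss}_2 \bmod \tilde{\sss}_1$ in $\kk[x_2]$ must vanish identically on the affine parameter space $\mathcal{W} \times \kk^{16}$ produced by \eqref{eq-paraRl}. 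Expanding this remainder gives a new polynomial identity among the coefficients $\alpha_{i,j,k,l}$, and the expectation, vindicated for $(2^3)$ with $\sss_1 \not\equiv 0$ in the text, is that this identity forces a leading coefficient of $\tilde{\sss}_1$ such as $\alpha_{2012}$ or $\alpha_{0221}$ to vanish. That collapse drops $\deg(\tilde{\sss}_1)$ below the value required for $v(\ell) = 28$, contradicting extremality. The sub-case $(3, 2, 1)$ with $\sss_1 \equiv 0$ is handled in the same spirit, working with $\tilde{\sss}_2$ and the coefficient relations recorded in \ref{example-one-triple-point-b}.

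The main obstacle is purely computational rather than conceptual: for each ramification type, the parametrization \eqref{eq-paraRl} carries many free coefficients, and $\tilde{\sss}_1$, $\tilde{\sss}_2$ can have degree into the twenties, so the division remainders and the resulting coefficient identities are unwieldy by hand. Each of them is nevertheless entirely routine with a computer algebra system, and once the identity is in hand the contradiction with the extremal count is immediate, as already illustrated for $(2^3)$ with $\sss_1 \not\equiv 0$ in the proof of Theorem \ref{thm1}. No new geometric ingredients beyond those of Section \ref{sect-sec-kind-quintics} are required.
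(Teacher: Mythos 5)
Your overall strategy -- sieve the case analysis from the proof of Theorem~\ref{thm1}, and in each residual case where $28$ survived as an upper bound, exploit that equality forces every root of $\tilde{\sss}_1$ to be a root of $\tilde{\sss}_2$ and derive coefficient identities from the division remainder -- is exactly the mechanism the paper itself deploys for type $(2^3)$ with $\sss_1\not\equiv 0$, and it is consistent with the paper's (one-sentence, detail-omitting) justification of the corollary, which says the missing ingredient is a finer analysis of $\sss_2$ when $\sss_1$ vanishes along $\ell$. So the method is the right one.

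However, your sieve has a concrete bookkeeping gap. You record ``$v(\ell)\leq 24$ for $(3,1^3)$ and for $(2^3)$ with $\sss_1\equiv 0$'', but $24$ is only the bound on $\deg(\tilde{\sss}_2)$, i.e.\ on the number of $\ell$-\emph{unramified} lines away from the excluded plane; Proposition~\ref{lemm-fundamental} says nothing about lines through the ramification points or through the points of $\ell$ where $\partial f/\partial x_4$ vanishes, and these must be added back. For $(2^3)$ with $\sss_1\equiv 0$ this still works out (the fibres $F_{P_1},F_{P_2}$ carry no lines and at most three lines pass through $P_3$, giving $27<28$), but for $(3,1^3)$ with $\sss_1\equiv 0$ the ramified contributions (up to four lines in $F_{P_2}$, the lines in the plane of $F_{P_1}$, and one line through each remaining simple ramification point by Lemma~\ref{lem-1line}) push the naive total to $28$ or beyond even after the degree reductions recorded in Remark~\ref{rem-triple-point}.b. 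Hence $(3,1^3)$ with $\sss_1\equiv 0$ cannot be discarded on the strength of the bound you quote: it belongs on your residual list and needs the same finer treatment of $\tilde{\sss}_2$ as the cases you do retain. Similarly, your figure of $v(\ell)\leq 24$ for $(3,1^3)$ with $\sss_1\not\equiv 0$ should be $27$ (namely $\deg(\tilde{\sss}_1)\leq 17$ plus at most eight lines in $F_{P_1}\cup F_{P_2}$ plus one through each remaining simple ramification point); the conclusion $<28$ survives there, but only because the correct tally happens to stay below $28$, not because of the number you cite. Until the $(3,1^3)$, $\sss_1\equiv 0$ subcase is added and disposed of, the corollary is not proved.
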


Finite field experiments suggest that the configurations ($28_b$), ($28_c$) might not exist, 
but proving this conjecture exceeds the scope of this paper.   
Below we describe the family of all quintics that carry the configuration ($28_a$).
In particular this shows that Thm~\ref{thm1} is  sharp.

\begin{example}[Quintics of type  $(28_a)$] 
\label{example-3squared-28}
Let us maintain the notation of \ref{example-one-triple-point} and assume that $\XXp$,
endowed with a line with two triple ramification points, satisfies the conditions 
\eqref{eq-quintic33-sc}.
We solve the system of equations given by the vanishing of $\sss_1$ along the line $\ell$. 
An elementary computation yields that $\alpha_{4001}$, $\alpha_{0410}$ vanish, we have
\begin{eqnarray*}
\alpha_{4010}=\textstyle{-\frac{1}{16}}\,{\alpha_{2021}}^{2}\alpha_{1112} , && \alpha_{{0401}}=\textstyle{-\frac{1}{16}}\,\alpha_{1121} \alpha_{{0212}}^{2}, \\
\alpha_{{3101}}=\textstyle{-\frac{1}{16}}\,\alpha_{2021}{\alpha_{{1112}}}^{2} ,  && \alpha_{{1310}}=\textstyle{-\frac{1}{16}}\,{\alpha_{{1121}}}^{2}\alpha_{{0212}} ,
\end{eqnarray*}
and the following equalities hold
\begin{eqnarray*}
\alpha_{{3110}} &=&
- \textstyle{\frac{1}{8}} \,  \alpha_{{2021}}\alpha_{{1112}}\alpha_{{1121}} - \textstyle{\frac{1}{16}} \,{\alpha_{2021}}^{2}\alpha_{{0212}}   , \\
\alpha_{{2201}}&=&
- \textstyle{\frac{1}{8}}\,\alpha_{{2021}}\alpha_{{1112}}\alpha_{{0212}} - \textstyle{\frac{1}{16}} \,{\alpha_{{1112}}}^{2}\alpha_{{1121}}, \\
\alpha_{{2210}}&=&
-\textstyle{\frac{1}{8}} \,\alpha_{{2021}}\alpha_{{1121}}\alpha_{{0212}}-  \textstyle{\frac{1}{16}} \,\alpha_{{1112}}
{\alpha_{{1121}}}^{2}, \\
 \alpha_{{1301}}&=&  -  \textstyle{\frac{1}{8}} \,\alpha_{{1112}}\alpha_{{1121}}\alpha_{{0212}}  -  \textstyle{\frac{1}{16}} \,\alpha_{{2021}}{\alpha_{{0212}}}^{2}  \, .
 \end{eqnarray*}
In this way we may define a map
\begin{equation} \label{eq-paraRl-z5}
{\mathcal Z}_5 \, : \, \kk^4 \times \kk^{6} \rightarrow  {\mathcal O}_{\PP^3}(5).
\end{equation}
Consider the image of $\mathcal Z_5$, a family of quintic surfaces.
A standard Groebner basis computation shows that the quintic surface 
$Z={\mathcal Z}_5(1, \hdots, 1)$ is smooth. By Remark~\ref{rem-triple-points}  none of the (ramified) fibers $F_{P_1}$, $F_{P_2}$ contain a line.   
Moreover, for the line $\ell := V(x_3, x_4)$ on $Z$  we have $\sss_1 = 0$, $\deg(\tilde{\sss}_2)=28$ and
the discriminant of $\tilde{\sss}_2$ does not vanish. 
By Prop.~\ref{lemm-fundamental}, the line $\ell$ meets exactly $28$ other lines on $Z$.
\end{example}

\begin{rem}
As we will see in Section \ref{bounding-lines-quintics} (by Lemma~\ref{lemma-colines}), 
if a line on a smooth quintic surface $\XXp$ is met by at least $26$ other lines, then
$\XXp$  contains a fivetuplet of  coplanar lines (including the given line).  
Thus no smooth surface in the family ${\mathcal Z}_5$ may contain more than $125$ lines by Corollary \ref{cor-125}.
\end{rem}


\section{Lines vs. conics or twisted cubics}

Before we can come to the proof of Theorem \ref{thm2},
we need a few more preparations, but now in a slightly different direction.
Namely we want to get a rough idea of the number of lines on a quintic
meeting a given (smooth) rational curve $C$ -- here either an irreducible conic or a twisted cubic.
To this end, we start by following fairly closely the lines of \cite{rs-2014}.

At any point $P\in C$, we can compare the tangent plane to $X_5=V(f)\subset\PP^3$,
\[
T_P = T_P X_5,
\]
and the Hessian quadric
$V_P\subset\PP^3$ defined by \eqref{eq:q_P}.
The intersection
$
T_P \cap V_P
$
generically consists of the two 3-contact lines of $X_5$ at $P$;
in particular, if some line $\ell\subset X_5$ contains $P$,
then
\begin{eqnarray*}
\label{eq:V_P}
\ell\subset T_P\cap V_P.
\end{eqnarray*}
Depending on the chosen curve $C$, we now turn to the total space
\begin{eqnarray}
\label{eq:Z}
Z = Z_C = \cup_{P\in C} (T_P\cap V_P) \subset \PP^3.
\end{eqnarray}

\subsection{Conic case}

\begin{prop}
\label{prop:Z}
Assume that $C$ is an irreducible conic. Then
$Z$ is a surface of degree $22$ in $\PP^3$.
It does not contain $X_5$,
but the surface $Z$ contains
any line in $X_5$ meeting $C$.
\end{prop}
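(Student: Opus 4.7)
The plan is to realise $Z$ as the image of an incidence correspondence
\[
I = \{(t,x) \in \PP^1 \times \PP^3 \mid x \in T_{C(t)} \cap V_{C(t)}\}
\]
under the second projection, where $\PP^1 \to C$ is the natural degree-two parametrisation of the conic. All three assertions of the proposition then reduce to a bidegree computation combined with one application of Euler's identity.

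For the degree, I would compute the bidegrees of the two defining equations of $I$ in $\PP^1\times\PP^3$. The tangent plane condition $\sum_i (\partial f/\partial x_i)(C(t))\,x_i = 0$ is linear in $x$ and of degree $4\cdot 2 = 8$ in $t$, so of bidegree $(8,1)$; the Hessian quadric equation $q_{C(t)}(x) = 0$ is quadratic in $x$ and of degree $3\cdot 2 = 6$ in $t$, so of bidegree $(6,2)$. Letting $s,h$ denote the hyperplane classes pulled back from $\PP^1$ and $\PP^3$ respectively (so $s^2=0$, $h^4=0$, $sh^3=1$), one finds
\[
[I] = (8s+h)(6s+2h) = 22\,sh + 2h^2,
\]
and hence $[I]\cdot h^2 = 22$. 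This gives $\deg(Z) \leq 22$, with equality once one checks that the projection $I\to Z$ is generically one-to-one --- the statement that a generic point of the ruled surface $Z$ lies on a single line of the family $\{T_{C(t)}\cap V_{C(t)}\}_t$. To rule out $X_5 \subset Z$, I would then note that every two-dimensional irreducible component of $Z$ is ruled by the lines inherited from the fibration $I\to \PP^1$, whereas the smooth quintic $X_5$ is classically known to contain only finitely many lines; in particular it is not covered by any one-parameter family of lines and so cannot appear as a component of $Z$.

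For the last assertion, let $\ell\subset X_5$ be a line meeting $C$ at $P$. Then $\ell\subset T_P X_5$ is immediate, and to see $\ell\subset V_P$ I would parametrise $\ell=\{P+tv\}$ and expand
\[
q_P(P+tv) = q_P(P) + t \sum_{i,j}P_i v_j \frac{\partial^2 f}{\partial x_i \partial x_j}(P) + t^2 q_P(v).
\]
Two applications of Euler's identity (first to $f$, then to its first partials) give $q_P(P) = 10\,f(P) = 0$ and reduce the cross term to $4\,\langle \nabla f(P),v\rangle$, which vanishes because $v$ is tangent to $X_5$ at $P$. Finally $q_P(v)=0$, since $f(P+tv)\equiv 0$ along $\ell$ forces the $t^2$-coefficient of its Taylor expansion at $P$ to vanish. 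Thus $\ell\subset T_P\cap V_P\subset Z$. The main obstacle in this plan is verifying that the projection $I\to Z$ is birational, so as to obtain the exact degree $22$ rather than only a divisor thereof; everything else is short and essentially formal.
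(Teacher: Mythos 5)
Your argument is correct and takes essentially the same route as the paper's, which (following the quoted Prop.~2.2 of the quartic paper, and exactly as in the twisted-cubic case Prop.~\ref{prop:twisted_cubic}) obtains the degree $22 = 2\cdot 8 + 1\cdot 6$ as the $t$-resultant of the bidegree-$(8,1)$ tangent-plane equation and the bidegree-$(6,2)$ Hessian-quadric equation, rules out $X_5\subset Z$ by the same ruled-by-lines observation, and uses the same Euler-identity computation for $\ell\subset T_P\cap V_P$. The birationality of $I\to Z$ that you flag as the main obstacle is immaterial for how the statement is used: the paper effectively takes $Z$ to be the degree-$22$ resultant hypersurface, so that $D=Z\cap X_5\in|\mathcal{O}_{X_5}(22)|$ in Corollary~\ref{cor88}, and only the upper bound on the degree enters the subsequent estimates.
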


\begin{proof}
The proof follows essentially word by word that of \cite[Prop.~2.2]{rs-2014}.
\end{proof}

\begin{cor}
\label{cor88}
A conic on a smooth quintic surface $X_5\subset\PP^3$ meets at most $88$ lines.
\end{cor}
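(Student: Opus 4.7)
The plan is to apply a B\'ezout-type argument to $X_5$ and the auxiliary surface $Z = Z_C$ from Proposition~\ref{prop:Z}. Since $Z$ has degree $22$ in $\PP^3$ and does not contain $X_5$, the scheme-theoretic intersection $X_5 \cap Z$ is a $1$-cycle of total degree $5 \cdot 22 = 110$, and by the same proposition every line on $X_5$ meeting $C$ appears as a component of this cycle of degree one.

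The conic $C$ is itself a further component of $X_5 \cap Z$: it lies on $X_5$ by assumption and on $Z$ because for each $P \in C$ one has $P \in T_P$ trivially and $P \in V_P$ by Euler's identity applied to $q_P$, which gives $q_P(P) = \tfrac{1}{2} d(d-1) f(P) = 0$. I would next establish that $C$ occurs in $X_5 \cap Z$ with multiplicity at least $11$. Granted this estimate, if $N$ denotes the number of lines on $X_5$ meeting $C$, each such line contributing degree one with multiplicity at least one to the cycle, B\'ezout yields
\[
N + 2 \cdot 11 \le 110,
\]
whence $N \le 88$ as claimed.

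To secure the multiplicity bound I would carry out a local analysis of $Z$ along $C$. At a generic $P \in C$ the surface $Z$ has two sheets through $P$, both tangent to $X_5$ along $C$---their common tangent plane equals $T_P X_5$, since the two principal lines through $P$ lie in $T_P X_5$---and each sheet is ruled by principal lines having $3$-contact with $X_5$ at their focal points on $C$. Combining the sheet-wise tangencies, the $3$-contact of the rulings, and the identity $T_P V_P = T_P X_5$ that holds at every $P \in X_5$, a careful local computation is intended to produce the required multiplicity of $C$ in the cycle.

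The main obstacle will be precisely this multiplicity estimate. A naive accounting---contributing multiplicity $3$ from each of the two sheets because of the $3$-contact along the rulings---only yields $m_C \ge 6$ and hence the weaker bound $N \le 98$. To reach the sharp value $m_C \ge 11$ one must extract the finer higher-order contact between each sheet and $X_5$ along $C$, in the spirit of the analogous Bezout argument for conics on quartic surfaces in \cite{rs-2014}.
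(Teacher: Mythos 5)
Your plan hinges on establishing that the conic $C$ occurs with multiplicity $m\geq 11$ in the divisor $D=Z\cap X_5$, and you concede that your local analysis of the two sheets of $Z$ along $C$ only delivers $m\geq 6$, hence only $N\leq 98$. This is a genuine gap, and in fact you are attacking the problem from the wrong side: no lower bound on $m$ is needed at all, and trying to force $m\geq 11$ by a finer contact analysis is not how the bound $88$ is obtained. The missing idea is a \emph{second} inequality that bounds the number of lines from above by an \emph{increasing} function of $m$, obtained by intersecting with $C$ on the surface $X_5$ rather than counting degrees in $\PP^3$. Concretely, every line meeting $C$ is a component of the effective divisor $D-mC$, which no longer contains $C$, so each of its components meets $C$ non-negatively and each adjacent line contributes at least $1$ to $C.(D-mC)$. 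Since $D\in|\mathcal O_{X_5}(22)|$ gives $C.D=44$ and adjunction gives $C^2=-4$, this yields
\[
N \;\leq\; C.(D-mC) \;=\; 44+4m,
\]
alongside your degree bound $N\leq \deg(D-mC)=110-2m$. Whatever the actual value of $m$, the minimum of the two right-hand sides never exceeds their common value at $m=11$, namely $88$. So the bound follows for every $m$, with no multiplicity computation along $C$ required.

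Two smaller points: the auxiliary surface and its properties (degree $22$, not containing $X_5$, containing all lines that meet $C$) are set up exactly as in the paper, so that part of your argument is fine; but you should also dispose of the case of a geometrically reducible conic (a pair of lines), which falls outside Proposition~\ref{prop:Z} and is handled directly by the valency bound of Theorem~\ref{thm1}.
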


\begin{proof}
If the conic $C$ is geometrically reducible, then the claim follows from Theorem \ref{thm1}.
Else consider the effective divisor 
\[
D = Z \cap X_5 \in |\mathcal O_{X_5}(22)|
\]
and let $m$ denote the multiplicity of $C$ in $D$.
Then, since all lines meeting $C$ are contained in $D$,
we obtain two inequalities
\begin{eqnarray}
\label{eq:88}
\#\{\text{lines on $X_5$ meeting } C\} \leq
\begin{cases}
\deg(D-mC) = 110-2m,\\
C.(D-mC) = 44+4m.
\end{cases}
\end{eqnarray}
The estimate in Corollary \ref{cor88} arises from $m=11$.
\end{proof}

\begin{rem}
\label{rem88}
If there are lines on $X_5$ planar with the conic $C$,
then the estimate for total number of lines meeting $C$ can be improved.
What's more important for us:
the bound 
\[
C.\left(\sum_{i=1}^M \ell_i\right) \leq 88.
\]
still holds where the second divisor comprises (all) lines contained in $X_5$.
\end{rem}

Corollary \ref{cor88} and Remark \ref{rem88}
will prove quite useful in the proof of Theorem \ref{thm2}
as they facilitate simplifications of  some of the arguments involved. 

\subsection{Twisted cubic case}
\label{ss:tcc}

Twisted cubic curves show a rather different behavior than conics -- notably because they are not plane.
We first investigate the surface $Z$ from \eqref{eq:Z}
arising from the twisted cubic $C$.

\begin{prop}
\label{prop:twisted_cubic}
Assume that $C$ is a twisted cubic on a smooth quintic $X_5\subset\PP^3$. Then
$Z$ is a surface of degree $33$ in $\PP^3$.
It does not contain $X_5$,
but the surface $Z$ contains
any line in $X_5$ meeting $C$.
\end{prop}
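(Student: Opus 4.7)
The plan is to mirror the approach of Proposition \ref{prop:Z}, adjusting the numerology to account for $C$ being a twisted cubic.

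First I would parametrize $C$ rationally by $t \mapsto P(t) = (p_1(t):\ldots:p_4(t))$ with cubic $p_i \in \kk[t]$. The linear form $H_t(x) := \sum_{i=1}^{4} \frac{\partial f}{\partial x_i}(P(t))\, x_i$ cutting out $T_{P(t)}$ then has coefficients of degree at most $4 \cdot 3 = 12$ in $t$, and the quadratic form $q_{P(t)}(x)$ cutting out $V_{P(t)}$ has coefficients of degree at most $3 \cdot 3 = 9$ in $t$. The bi-homogeneous incidence $\mathcal I \subset \PP^1_t \times \PP^3$ cut out by $H_t(x) = q_{P(t)}(x) = 0$ projects onto $Z$.

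To get the degree, I would intersect $Z$ with a generic line $L = \{\lambda p + \mu q\} \subset \PP^3$: for each $t$, the unique point $Q(t) := T_{P(t)} \cap L = -H_t(q)\, p + H_t(p)\, q$ has coordinates of degree at most $12$ in $t$, so the vanishing condition $q_{P(t)}(Q(t)) = 0$ defines a polynomial in $t$ of degree at most $9 + 2 \cdot 12 = 33$. Arguing as in the conic case of \cite[Prop.~2.2]{rs-2014}, for generic $L$ this polynomial has exactly $33$ simple roots, giving $\deg Z = 33$. For the inclusion of lines, if $\ell \subset X_5$ meets $C$ at $P_0 = P(t_0)$ with direction vector $v$, expanding the identity $f(P_0 + sv) \equiv 0$ in $s$ forces $H_{t_0}(v) = 0$ and $q_{P_0}(v) = 0$ from the coefficients of $s$ and $s^2$. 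Applying Euler's identity to $\frac{\partial f}{\partial x_j}$, namely $\sum_i (P_0)_i \frac{\partial^2 f}{\partial x_i \partial x_j}(P_0) = 4 \frac{\partial f}{\partial x_j}(P_0)$, together with $q_{P_0}(P_0) = 0$ and $H_{t_0}(v) = 0$, then yields $q_{P_0}(P_0 + sv) \equiv 0$ identically in $s$, so $\ell \subset T_{P_0} \cap V_{P_0} \subset Z$.

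The main obstacle is the non-containment $X_5 \not\subset Z$, for which I would use a dimension argument. Suppose $X_5 \subset Z$. Since $X_5$ is irreducible and $Z$ is $2$-dimensional, $X_5$ would be an irreducible component of $Z$, and the preimage of $X_5$ in $\mathcal I$ under the projection to $\PP^3$ would be $2$-dimensional. Restricting the other projection $\mathcal I \to \PP^1$ to this preimage, the generic fiber over $t \in \PP^1$ would be $X_5 \cap (\ell_1(t) \cup \ell_2(t))$, where $\ell_1(t) \cup \ell_2(t) = T_{P(t)} \cap V_{P(t)}$ is the (generically reduced) union of the two principal lines at $P(t)$. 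An infinite generic fiber would force a $1$-parameter family of the lines $\ell_i(t)$ to lie on $X_5$, contradicting the finiteness of lines on a smooth quintic (which is of general type, hence non-uniruled, or directly via \eqref{eq:M(d)}); a finite generic fiber would make the preimage at most $1$-dimensional, again a contradiction. Hence $X_5 \not\subset Z$.
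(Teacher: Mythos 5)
Your proof is correct and follows essentially the same route as the paper: you compute the degrees $12$ and $9$ of the tangent-plane and Hessian-quadric forms along the cubic parametrization and eliminate the parameter to get $2\cdot 12+1\cdot 9=33$ (the paper uses the resultant of the two forms, which is equivalent to your generic-line section), and you rule out $X_5\subset Z$ by observing that $Z$ is swept out by lines while a smooth quintic carries only finitely many, which is exactly the paper's ruledness argument unpacked via the incidence variety.
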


\begin{proof}
By a linear transformation, we can assume $C$ to be parametrized by
\begin{eqnarray*}
\varphi: \;\; \PP^1\; & \to & \;\;\;\;\;\;\;\;\;C\\
~[s,t] & \mapsto  & [t^3,s^3,t^2s,ts^2]
\end{eqnarray*}
The homogeneous ideal of $C$  is thus generated by the three quadrics
\[
q_1 = x_1x_2-x_3x_4,\; \;\; q_2 = x_1x_4-x_3^2, \;\;\; q_3 = x_2x_3-x_4^2.
\]
At the point $P=\varphi(s,t)$, the tangent plane to the quintic $X_5$
is thus given by a bihomogeneous polynomial 
\[
g\in \KK[s,t][x_1,\hdots,x_4]
\]
which is linear in the $x_i$ and of degree $12$ in $s,t$.
Meanwhile the quadratic form $q$ defining $V_P$ has degree $9$ in $s,t$.
$Z$ is defined by the resultant of $g$ and $q$ with respect to $t$
which thus has degree $33 = 2\cdot 12 + 1\cdot 9$.

To conclude, we note as in \cite{rs-2014} that any component of $Z$ is automatically ruled by lines.
In particular, neither component  can equal $X_5$.
\end{proof}

Contrary to Corollary \ref{cor88}, Proposition \ref{prop:twisted_cubic} alone does not give a serious estimate
for the number of  lines on $X_5$ intersecting the twisted cubic $C$.
Arguing as in the proof of Corollary \ref{cor88} with the divisor
\begin{eqnarray}
\label{eq:DC}
D = Z\cap X_5 \in |\mathcal O_{X_5}(33)|,
\end{eqnarray}
we see at least that a plentitude of adjacent lines
forces $C$ to have large multiplicity in $D$.
The combination with the following result 
will prove quite useful in the sequel.

\begin{prop}
\label{prop:double}
Assume that some line $\ell$ on the smooth quintic $X_5\subset\PP^3$
meets the twisted cubic $C$ with multiplicity two.
Then $\ell$ has multiplicity at least two in $D$,
i.e. $D-2\ell>0$.
\end{prop}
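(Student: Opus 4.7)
The plan is to work with the defining equation $Z=V(R)$ of the ruled surface from Proposition~\ref{prop:twisted_cubic}, where $R(x)=\operatorname{Res}_{(s:t)}(g,q)$: here $g\in\kk[s,t][x_1,\ldots,x_4]$ is linear in $x$, of degree $12$ in $(s,t)$, and cuts out $T_{\varphi(s,t)}$, while $q$ is quadratic in $x$, of degree $9$ in $(s,t)$, and cuts out $V_{\varphi(s,t)}$. I would measure the order of vanishing of $R$ along $\ell$ via the product formula
\[
R(x)=\operatorname{lc}(g)^{\deg q}\prod_{\alpha}q(\alpha;x),
\]
where $\alpha$ ranges over the roots of $g(\cdot;x)$. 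Since $\ell$ lies on the quintic it is a $3$-contact line of $X_5$, hence $\ell\subset T_P\cap V_P$ for every $P\in\ell$; consequently, whenever $P\in\ell\cap C$ and $x\in\ell$, the parameter $u_P:=\varphi^{-1}(P)\in\PP^1$ is a common zero of $g(\cdot;x)$ and $q(\cdot;x)$.

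Suppose first that $\ell$ meets $C$ in two distinct points $P_1,P_2$. Generically the zeros $u_{P_1},u_{P_2}$ are simple in both $g$ and $q$, and perturbing $x=x_0+\varepsilon w$ off $\ell$ makes the two roots $\alpha_i(x)$ of $g$ near $u_{P_i}$ move to first order in $\varepsilon$; a direct expansion gives $q(\alpha_i(x);x)=O(\varepsilon)$. The product of the two small factors in the resultant formula yields $R=O(\varepsilon^2)$, hence $\operatorname{ord}_\ell R\ge 2$.

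Suppose instead that $\ell\cap C=2P$ scheme-theoretically, so $\ell=T_PC$ and the tangent vector $v=\varphi'(u_P)$ spans the direction of $\ell$ at $P$. I would argue that both $g(\cdot;x)$ and $q(\cdot;x)$ now have a double zero at $u_P$ for every $x\in\ell$. Writing $x=P+\mu v$, one has $g(u_P;x)=0$ and $q(u_P;x)=0$ directly from $\ell\subset T_P\cap V_P$. The formula $\partial_u g(u_P;x)=x^{\top}H_f(P)\,v$ collapses via $H_f(P)\,P=4\,\nabla f(P)$ (Euler applied to the homogeneous gradient), together with $\nabla f(P)\cdot v=0$ and $q_P(v)=0$. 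For the remaining derivative
\[
\partial_u q(u_P;x)=\tfrac12\sum_{i,j,k}(\partial_i\partial_j\partial_k f)(P)\,v_k\,x_ix_j,
\]
substitution of $x=P+\mu v$ yields three contributions: iterated applications of Euler's identity reduce the two mixed terms to $12\,\nabla f(P)\cdot v$ and $12\mu\,q_P(v)$ (both vanishing), while the pure term $\mu^2\,\partial_v^3 f(P)$ vanishes because $f\equiv 0$ along $\ell$. Thus $g$ and $q$ each have a double zero at $u_P$; a standard Puiseux expansion then shows that their two roots near $u_P$ move as $O(\varepsilon^{1/2})$ under a perturbation of $x$, while the values of $q$ on them remain $O(\varepsilon)$, and the product of these two factors again yields $R=O(\varepsilon^2)$.

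Either way $Z$ contains $\ell$ with multiplicity at least two, and restricting to $X_5$ we obtain $D-2\ell\ge 0$. The main obstacle is the tangent case: the vanishing of $\partial_u q$ rests essentially on the third-order identity $\partial_v^3 f(P)=0$, which in turn encodes that $f$ vanishes to all orders along the line $\ell\subset X_5$. Converting this double-vanishing of $g$ and $q$ at $u_P$ into a quantitative estimate of $\operatorname{ord}_\ell R$ then requires the Puiseux-type analysis of their roots sketched above.
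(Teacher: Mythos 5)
Your argument is correct in substance and shares its geometric core with the paper's proof: both hinge on the fact that for every $P\in\ell$ one has $\ell\subset T_PX_5\cap V_P$, so the parameters of $\ell\cap C$ are common zeros of $g(\cdot;x)$ and $q(\cdot;x)$ for all $x\in\ell$, and the intersection multiplicity two supplies ``two roots' worth'' of common vanishing. Where you diverge is in how this is converted into the statement $\operatorname{ord}_\ell(\operatorname{Res})\ge 2$. The paper stays purely algebraic: it writes $f=q_1c_1+q_2c_2+q_3c_3$ with $c_1\in I(\ell)$, deduces $g,q\in(x_3,x_4,st)$ (resp.\ $g,q\in(x_2,x_4,s^2)$ in the tangent case), and reads off that two columns of the Sylvester matrix lie in $I(\ell)$, so the determinant lies in $I(\ell)^2$ --- no genericity, no analysis of roots. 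You instead use the product formula for the resultant and track how the roots of $g$ move under a transverse perturbation $x=x_0+\varepsilon w$. This works, and your Euler-identity verification that $g$ and $q$ vanish \emph{doubly} at the tangency parameter (via $H_f(P)P=4\nabla f(P)$, $v^{\top}H_f(P)v=0$ and $\partial_v^3f(P)=0$) is a clean conceptual substitute for the paper's coordinate computation; but the perturbation step needs slightly more care than you give it. In the two-point case you assume the roots $u_{P_1},u_{P_2}$ of $g(\cdot;x_0)$ are simple ``generically'': if instead one of them has multiplicity $m\ge 2$ for all $x_0\in\ell$, the $m$ nearby roots move only as $O(\varepsilon^{1/m})$ and the individual factors contribute fractional orders; one must then either observe that the product over the cluster is still $O(\varepsilon)$ (e.g.\ via $g(u_{P_i};x)=O(\varepsilon)$ and Weierstrass factorization), or simply note that the total order is $>1$ and is an integer, hence $\ge 2$. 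The same remark applies to the Puiseux step in the tangent case and to the leading-coefficient normalization in the product formula. These are repairable technicalities rather than gaps, but they are exactly the frictions the Sylvester-column argument is designed to avoid.
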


\begin{proof}
Assume that $\ell$ and $C$ intersect in two distinct point $P_1, P_2$.
On top of the above normalization,
we can assume that 
\[
I(\ell)=(x_3,x_4) \;\;\; \text{ and } \;\;\; 
P_1 = [1,0,0,0], \;\;\; P_2 = [0,1,0,0].
\]
By assumption, $f\in I(C) \cap I(\ell)$, i.e. there are cubic polynomials $c_i\in\KK[x_1,\hdots,x_4]$ such that
\[
f = q_1 c_1 + q_2 c_2 + q_3 c_3 \;\;\; \text{ with } \;\;\; c_1\in (x_3,x_4).
\]
In the end, it will suffice to consider polynomials modulo $st$
(such that only the pure powers in $s$ and in $t$ remain);
for this purpose, the following observation concerning the evaluation at $P=\varphi(s,t)$ will be useful:
\[
c_1, q_2, q_3 \in (x_3,x_4) \;  \Longrightarrow \; st \mid c_1(P), q_2(P), q_3(P).
\]
Moreover, the same holds true for the (iterated) partial derivates with respect to $x_1$ and $x_2$.
With this in mind, it is easy to get a hand on the defining polynomials $g$ of $T_P(S)$ and $q$ of $V_P$.
For instance,
\[
g \equiv s^3 c_3(P) x_3 + t^3 c_2(P) x_4 \mod (st).
\]
In other words, $g\in(x_3,x_4,st)$, and the same holds for $q$ 
(as one easily checks by  considering the double partials with respect to $x_1, x_2$).

We continue by computing the resultant of $g$ and $q$ via the Sylvester matrix with respect to $s$ or $t$.
The above argument shows that the first and the last column of this matrix has all (two) entries in $(x_3,x_4)$.
Hence the determinant of the matrix is in $(x_3,x_4)^2$,
and the line $\ell$ has multiplicity at least two in $D$ as claimed.

If the line $\ell$ meets $C$ tangentially, say at $[1,0,0,0]$,
then $I(\ell)=(x_2,x_4)$, and the analogous argument shows that
\[
g, q \in (x_2, x_4, s^2).
\]
Hence the first two (or the last two) columns of the Sylvester matrix have entries in $(x_2, x_4)$,
and we conclude as before.
\end{proof}


\section{Bounding the total number of lines on quintics} 
\label{bounding-lines-quintics}

This section has a slightly different flavour than the previous sections  of the paper
as the methods are rather different.
Our aim is to complete the proof of our second main result, Theorem \ref{thm2}.
Recall from Corollary \ref{cor-125}
that if a smooth quintic surface $X_5\subset\PP^3$ admits a hyperplane section splitting into five lines, then  $\XXp$
contains at most $125$ lines. 
In order to prove Theorem \ref{thm2},
it thus remains to study the other possible configurations of lines.

%


Recall  that the locus of points $P$ such that there exists a line that meets $\XXp$ with multiplicity at least $4$ in $P$ 
is the support of a divisor in ${\mathcal O}_{\XXp}(31)$ 
(see \cite{clebsch},  \cite[$\S$~8]{kollar}). 
Following classical terminology, we  call it the \emph{flecnodal divisor} of $\XXp$ and denote it by  ${\mathcal F}$.  
Observe that each line on $\XXp$ is a component of ${\mathcal F}$.  
A curve on $\XXp$ is called {\sl flecnodal} if and only if it is a component of 
the flecnodal divisor  ${\mathcal F}$. 

We record the following simple observation. 

\begin{lemm} \label{lemma-colines}
\label{lem:25}
If a line $\ell\subset X_5$ is not contained in any hyperplane splitting into lines, then
\[
v(\ell)\leq 25.
\]
\end{lemm}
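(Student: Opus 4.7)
The plan is to prove the contrapositive: assuming $v(\ell)\geq 26$, we shall produce a hyperplane $H\supset \ell$ whose residual quartic $F_H$ splits into four distinct lines, thus providing the desired five-line hyperplane section.

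First I would reduce the possible ramification types of $\ell$. Proposition \ref{prop-sfk} yields $v(\ell)\leq 26$ for lines of the first kind, and Corollary \ref{cor-type111111} gives $v(\ell)\leq 24$ for second-kind lines of ramification type $(1^6)$. Hence it remains to treat second-kind lines whose ramification type belongs to one of the five possibilities listed in \eqref{eq-rrl-five}, or the type $(2,1^4)$ of Corollary \ref{cor-type21111}, together with the boundary case of a first-kind line with $v(\ell)=26$.

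For each of the six remaining second-kind types I would revisit the case-by-case analysis from the proof of Theorem \ref{thm1}. That proof bounds $v(\ell)$ via the degrees of $\tilde{\sss}_1$ and $\tilde{\sss}_2$ (combined with Lemma \ref{lem-1line} and the structure of ramified fibers), and it already yields $v(\ell)\leq 25$ in all but a handful of extremal subcases---namely the three configurations $(28_a),(28_b),(28_c)$ of Corollary \ref{cor-28abc} together with their immediate degenerations that allow $v(\ell)\in\{26,27\}$. In each such extremal subcase, the specific coefficient vanishings needed to push $v(\ell)$ past $25$ shall be shown to force some fiber $F_H$ to acquire four distinct line components: by direct polynomial computation in the family ${\mathcal Z}_5$ of Example \ref{example-3squared-28} for $(28_a)$, via Lemma \ref{lemma-splitplane-21} and a B\'ezout argument on $\Ruledeight^r\cap X_5$ for $(28_c)$, and by the analogous parametric argument from \ref{example-double-point} for $(28_b)$. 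The first-kind case at $v(\ell)=26$ is handled similarly via Segre's resultant from the proof of Proposition \ref{prop-sfk}: the $26$ simple roots needed to attain the bound impose restrictions on the distribution of lines among the planes through $\ell$ that are incompatible with the absence of a four-line fiber.

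The main obstacle lies in the case-by-case nature of these verifications, especially for the conjecturally empty configurations $(28_b)$ and $(28_c)$ where no explicit example is known: in those cases the argument must rely on the parametric descriptions developed in Sections \ref{sect-sec-kind-quintics}--\ref{sect-segre-surface} rather than on concrete surfaces, and checking that every candidate quintic realizing $v(\ell)\geq 26$ possesses a four-line fiber is the computationally delicate part of the proof.
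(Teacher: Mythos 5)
Your strategy is fundamentally different from the paper's, and as it stands it has genuine gaps. The paper proves this lemma in two lines by a purely topological argument: the Euler number formula for the fibration \eqref{eq:fibration-deg-d} (quoted from \cite[Prop.~III.11.4]{bpvh}) says that $e(X_5)=55$ equals $2\cdot(-4)$ plus the sum over singular fibers of $e(F)+4$, so the total correction budget is $63$; computing $e(F)$ for each possible degeneration of a plane quartic (via \cite[Cor.~V.4.4.ii]{dimca}) shows that, once fibers consisting of four lines are excluded, each fiber contributes at most two adjacent lines per five units of Euler correction (the extremal case being two lines plus a conic), whence $v(\ell)\leq \lfloor 2\cdot 63/5\rfloor=25$. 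This argument is uniform in the ramification type and does not touch the $\sss_k$-machinery at all.

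Your contrapositive route, by contrast, requires showing that \emph{every} configuration with $v(\ell)\in\{26,27,28\}$ forces a fiber to split into four lines, and the proposal does not actually establish this anywhere. Two concrete problems. First, for a line of the first kind Proposition \ref{prop-sfk} allows $v(\ell)=26$ with all $26$ adjacent lines meeting $\ell$ at $26$ distinct points; Segre's resultant then simply has $26$ simple roots and carries no information about how those lines distribute among the fibers, so your claimed ``incompatibility with the absence of a four-line fiber'' does not follow from the cited tool. Second, for the second-kind types the proof of Theorem \ref{thm1} leaves many subcases (not just $(28_a)$--$(28_c)$) in which the degree bounds on $\tilde\sss_1$, $\tilde\sss_2$ only give $v(\ell)\leq 26$, $27$ or $28$; in each you would have to exhibit a totally split fiber by explicit computation in the relevant parametrization, and for $(28_b)$, $(28_c)$ the paper itself conjectures these configurations do not exist but cannot prove it --- so the verification you defer to is precisely the part nobody knows how to do by these methods. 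The Euler-number argument sidesteps all of this; without it (or some substitute global input), your plan cannot be completed as described.
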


\begin{proof}
This is a consequence of the Euler number formula \cite[Prop.~III.11.4]{bpvh}  for the fibration \eqref{eq:fibration-deg-d} and 
the Euler number computation for singular curves (see \cite[Cor.~V.4.4.ii]{dimca}). 
\end{proof}

In order to start working towards the proof of Theorem \ref{thm2},
we make from now the following assumption:

\begin{assumption}
\label{ass}
The smooth quintic $X_5$ contains $M>127$ lines.
\end{assumption}

In particular, Corollary \ref{cor-125} implies that $X_5$ does not admit any hyperplane splitting into five lines.
It thus follows from Lemma \ref{lemma-colines}
that $v(\ell)\leq 25$ for any line on $X_5$.
Numbering  the lines on $X_5$ from $\ell_1$ to $\ell_M$,
we  introduce the following auxiliary effective divisor:
\begin{eqnarray}
\label{mF'}
\mF':= \mF- \sum_{i=1}^M \ell_i,\;\;\; \deg(\mF') = 155-M \leq 27.
\end{eqnarray}
Note that we can compute the intersection number of $\mF'$ and any line $\ell\subset X_5$ as follows:
\begin{eqnarray}
\label{eq9}
\ell.\mF' = \ell.\left(\mF-\sum_{i=1}^M\ell_i\right) = 31 - (v(\ell) - 3) = 34-v(\ell) \geq 9
\end{eqnarray}
where the last estimate follows from Lemma \ref{lemma-colines}.

The overall idea of our approach goes as follows: 
For large $M$, the intersection number in \eqref{eq9} becomes too big relative to the degree of $\mF'$.
Indeed, for a single line, such a phenomenon might well occur, but not for all lines at the same time 
as we shall see soon.

For later reference, we compute the self-intersection number $(\mF')^2$ in two ways.
We know that $\mF^2 = 5\cdot 31^2$. Let us compare this intersection number with the decomposition \eqref{mF'}.
Clearly we have
\[
\left(\sum_{i=1}^M \ell_i\right)^2 = \sum_{i=1}^M \left(\ell_i.\sum_{j=1}^M \ell_j\right) = \sum_{i=1}^M (v(\ell_i)-3) = \sum_{i=1}^M v(\ell_i)-3M
\]
while 
\[
\sum_{i=1}^M \ell_i.\mF' = \sum_{i=1}^M (34-v(\ell_i)) = 34M - \sum_{i=1}^M v(\ell_i).
\]
Together this yields, using Lemma \ref{lemma-colines},
\begin{eqnarray}
\label{eqq2}
\mF^2 = (\mF')^2 + 65M  - \sum_{i=1}^M v(\ell_i)\geq (\mF')^2 + 40 M.
\end{eqnarray}
Assumption \ref{ass} thus gives
\begin{eqnarray}
\label{eqq}
(\mF')^2 \leq 5\cdot 31^2 - 40 M\leq -315.
\end{eqnarray}
Note that, if some lines have valency less than $25$, then the bounds \eqref{eqq2}, \eqref{eqq} improve accordingly
(as we exploit occasionally), and similarly for $M>128$.

If $\mF'$ does not contain any multiple components, then \eqref{eqq} directly leads to a contradiction as follows:
Write
\[
\mF' = \sum_{j=1}^r C_i
\]
for some distinct irreducible curves $C_i\subset X_5$.
Since $K_{X_5}=H$, adjunction gives 
\begin{eqnarray}
\label{eq:C^2}
C_i^2 \geq -2 - \deg(C_i).
\end{eqnarray}
Applied to the above decomposition of $\mF'$, this gives
\[
(\mF')^2 = \left(\sum_{j=1}^r C_j\right)^2 \geq \sum_{j=1}^r C_j^2 \geq -2r - \deg \mF' \geq -3\deg(\mF') \geq -81.
\]
This contradicts \eqref{eqq} by far, so $\mF'$ has to admit multiple components.
We shall now study the multiple components more precisely,
but before going into the details, we eliminate the lines from much of what is to follow.

\begin{lemm}
\label{lem:linear}
Any line contained in the support of $\mF'$ contributes positively to $(\mF')^2$.
\end{lemm}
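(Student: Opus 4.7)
The plan is to isolate the line from the rest of $\mathcal{F}'$ and compute its contribution to the self-intersection directly, using adjunction together with the established bound on the valency.

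Let $\ell$ be a line contained in the support of $\mathcal{F}'$, and let $m \geq 1$ denote its multiplicity there. Write
\[
\mathcal{F}' = m\ell + D,
\]
where $D$ is effective and $\ell \not\subset \operatorname{supp}(D)$. Then
\[
(\mathcal{F}')^2 = m^2 \ell^2 + 2m\, \ell.D + D^2,
\]
so the ``contribution of $\ell$'' is $m^2 \ell^2 + 2m\, \ell.D$; I would just need to show this is strictly positive.

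The first step is to compute $\ell^2$. Since $K_{X_5} = H$, adjunction on a line gives $\ell^2 = -2 - K_{X_5}.\ell = -3$. The second step is to express $\ell.D$ in terms of quantities already controlled: using \eqref{eq9},
\[
34 - v(\ell) = \ell.\mathcal{F}' = m\,\ell^2 + \ell.D = -3m + \ell.D,
\]
so $\ell.D = 34 - v(\ell) + 3m$. Substituting, the contribution of $\ell$ becomes
\[
-3m^2 + 2m\bigl(34 - v(\ell) + 3m\bigr) = m\bigl(3m + 68 - 2v(\ell)\bigr).
\]

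The final step is to invoke Assumption \ref{ass}: by Corollary \ref{cor-125}, $X_5$ admits no hyperplane splitting into five lines, hence Lemma \ref{lem:25} gives $v(\ell) \leq 25$. Combined with $m \geq 1$, this yields
\[
3m + 68 - 2v(\ell) \;\geq\; 3 + 68 - 50 \;=\; 21 > 0,
\]
so the contribution $m(3m+68-2v(\ell)) \geq 21 > 0$, as claimed. There is no real obstacle here — the only subtlety is remembering that $\ell$ is automatically a component of $\mathcal{F}$, so its presence in $\mathcal{F}'$ means $m\geq 1$ refers to the \emph{excess} multiplicity, and that the valency bound $v(\ell)\leq 25$ (rather than the weaker $v(\ell)\leq 28$ of Theorem \ref{thm1}) is exactly what keeps $68 - 2v(\ell)$ safely positive.
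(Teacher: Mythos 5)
Your proof is correct. It rests on the same two ingredients as the paper's argument --- the intersection bound $\ell.\mF' = 34 - v(\ell)\geq 9$ from \eqref{eq9} (which in turn uses Assumption \ref{ass}, Corollary \ref{cor-125} and Lemma \ref{lem:25}) and the effectivity of the residual divisor --- but it organizes them slightly differently. You strip off a single line $\ell$ with its full multiplicity $m$ and compute its contribution $m^2\ell^2+2m\,\ell.D = m\bigl(3m+68-2v(\ell)\bigr)\geq 21$ explicitly, using adjunction to get $\ell^2=-3$. The paper instead splits off \emph{all} linear components simultaneously, $\mF'=\mF''+\mL$, and expands $(\mF')^2=(\mF'')^2+\mF''.\mL+\mL.\mF'$, which avoids computing $\ell^2$ and directly yields the quantitative inequality $(\mF')^2\geq(\mF'')^2+9\deg(\mL)$ of \eqref{eq:L}; it is that inequality, with $\mF''$ the fully non-linear part, that is actually invoked later (in Remark \ref{rem:linear}, Lemma \ref{lem:N} and Lemma \ref{lem:4l}). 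For a single line the two computations agree --- your version simply retains the terms $\mF''.\mL$ and $m^2\ell^2$ that the paper discards, which is why you get the sharper constant $21m$ in place of $9m$. The one caveat is that your per-line statement does not immediately iterate to give \eqref{eq:L}: after removing the first line, the intersection of a second line with the new residual divisor is no longer controlled by \eqref{eq9} alone. So your argument fully proves the lemma as stated, but the paper's simultaneous decomposition is the more convenient formulation for the sequel.
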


\begin{proof}
Write $\mF'=\mF''+\mL$,
where $\mL$ is the sum of lines contained in the support of $\mF'$ (with multiplicities),
\[
\mL = \sum_{j=1}^s \ell^j,
\]
and $\mF''$ is an effective divisor on $X_5$ not containing any line in its support.
We compute, using \eqref{eq9},
\begin{eqnarray}
\label{eq:aux}
(\mF')^2  & = & \mF'.(\mF''+\mL) = (\mF'')^2 + \underbrace{\mF''.\mL}_{\geq 0} + \sum_{j=1}^s \underbrace{\ell^j.\mF'}_{\geq 9};
\end{eqnarray}
that is,
\begin{eqnarray}
\label{eq:L}
(\mF')^2
& \geq & (\mF'')^2 + 9\deg(\mL)
\end{eqnarray}
as claimed.
\end{proof}


Thanks  to Lemma \ref{lem:linear},
we can  reduce our considerations for $\mF'$ to satisfy \eqref{eqq}
to an investigation of the non-linear components of $\mF'$, i.e.~we shall often work with $\mF''$.
Sometimes we will even use linear components to our advantage, using the positive contribution
in \eqref{eq:L}. In fact, this can be sharpened substantially:

\begin{rem}
\label{rem:linear}
More precisely, any line $\ell$ contained in the support of $\mF'$ satisfies
\[
9 \leq \ell.\mF' = \underbrace{\ell.\mL}_{\leq  \deg(\mL)-4} + \;\; \ell.\mF'' \;\;  \Longrightarrow \;\; \ell.\mF'' \geq 13 - \deg(\mL).
\]
In particular, $\mL.\mF''\geq \deg(\mL)(13-\deg(\mL))$. Plugging into \eqref{eq:aux} causes \eqref{eq:L} to improve to
\begin{eqnarray}
\label{eq:LL}
(\mF')^2 \geq (\mF'')^2 + \max\{9\deg(\mL), \deg(\mL)(22-\deg(\mL))\}.
\end{eqnarray}
\end{rem}

The next result will be crucial in determining the top multiplicity of the components of $\mF'$.
For later reference, we first note it in the general set-up.

\begin{lemm}
\label{lem:multi}
Let $D$ denote an effective divisor with $D^2<0$.
Then the support of $D$ contains a component $C$
whose multiplicity $N$ satisfies
\begin{eqnarray}
\label{eq:multi}
N\cdot C^2 \leq\epsilon
\deg(C) \;\;\; \text{ where } \;\; \epsilon =  \frac{D^2}{\deg(D)}<0.
\end{eqnarray}
\end{lemm}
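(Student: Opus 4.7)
The plan is to decompose $D$ into its irreducible components, exploit non-negativity of intersections between distinct irreducible curves, and then invoke a standard weighted-average (mediant) argument to single out a component with the desired ratio.

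First I would write
\[
D = \sum_{i=1}^r N_i C_i
\]
where the $C_i$ are the distinct irreducible components appearing in the support of $D$ (so $N_i\geq 1$). Since $X_5$ is a smooth projective surface and the $C_i$ are pairwise distinct irreducible curves, $C_i.C_j\geq 0$ for $i\neq j$. Fixing an index $i$ and pairing with $C_i$, I obtain
\[
D.C_i \;=\; N_i C_i^2 \;+\; \sum_{j\neq i} N_j\, C_j.C_i \;\geq\; N_i C_i^2. \tag{$\ast$}
\]
This is the only place where positivity of the off-diagonal intersections is used, and it reduces the problem to producing an index $i$ with $D.C_i\leq \epsilon \deg(C_i)$.

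Next I would observe that $\epsilon$ is literally a weighted average of the ratios $D.C_i/\deg(C_i)$: since
\[
D^2 \;=\; \sum_{i=1}^r N_i\, D.C_i, \qquad \deg(D) \;=\; \sum_{i=1}^r N_i\, \deg(C_i),
\]
we have
\[
\epsilon \;=\; \frac{D^2}{\deg(D)} \;=\; \frac{\sum_i (N_i \deg(C_i))\cdot \bigl(D.C_i/\deg(C_i)\bigr)}{\sum_i N_i \deg(C_i)}.
\]
Because each weight $N_i\deg(C_i)$ is strictly positive, the standard mediant inequality yields some index $i_0$ with
\[
\frac{D.C_{i_0}}{\deg(C_{i_0})}\;\leq\;\epsilon.
\]
(Otherwise every term strictly exceeds $\epsilon$, forcing $D^2>\epsilon\deg(D)=D^2$, a contradiction.)

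Combining this with $(\ast)$ applied to $C:=C_{i_0}$ and its multiplicity $N:=N_{i_0}$ gives
\[
N\cdot C^2 \;\leq\; D.C \;\leq\; \epsilon\cdot \deg(C),
\]
which is the asserted inequality. Note that $\epsilon<0$ and $\deg(C)>0$ then force $C^2<0$, so $C$ is automatically a negative-self-intersection curve; this is consistent with what one expects for a multiple component of an effective divisor of negative self-intersection. I do not foresee any real obstacle: the only non-formal input is the non-negativity of $C_i.C_j$ for distinct irreducible components on a smooth surface, and the rest is the mediant inequality applied to a positive weighted average.
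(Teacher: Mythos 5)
Your proof is correct and is essentially the same as the paper's: both decompose $D$ into distinct irreducible components, use $C_i.C_j\geq 0$ for $i\neq j$ to get $D.C_i\geq N_iC_i^2$, and apply an averaging/pigeonhole argument to $\epsilon=D^2/\deg(D)$. The paper merely phrases it as a one-line contradiction ($D^2\geq\sum_i N_i^2C_i^2>\epsilon\deg(D)=D^2$) rather than isolating the mediant inequality as an intermediate step.
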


\begin{proof}
Write 
\[
D = \sum_i N_i C_i
\]
for distinct irreducible curves $C_i$.
Assume that \eqref{eq:multi} does not hold for either component of $D$.
Then
\[
D^2 \geq \sum_i N_i^2 C_i^2 > \sum_i N_i(\epsilon \deg(C_i)) = \epsilon\deg(D) = D^2,
\]
giving the required contradiction.
\end{proof}

We shall use this lemma repeatedly.
For our key case of $D=\mF'$, we note the following consequence.

\begin{lemm}
\label{lem:N}
The support of $\mF'$ contains a non-linear component $C$
whose multiplicity $N$ satisfies
\begin{eqnarray}
\label{eq:multi2}
N\cdot C^2 <-11\deg(C).
\end{eqnarray}
\end{lemm}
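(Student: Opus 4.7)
The plan is to deduce Lemma \ref{lem:N} from Lemma \ref{lem:multi} applied not to $\mF'$ but to its non-linear part $\mF''$, so that the component produced is automatically non-linear. Naively applying Lemma \ref{lem:multi} to $\mF'$ itself would be useless because it could output a line. Fortunately, the passage to $\mF''$ works in our favour: the positive contribution of linear summands to $(\mF')^2$ recorded in Lemma \ref{lem:linear} makes the governing ratio $D^2/\deg(D)$ even more negative on $\mF''$ than on $\mF'$.

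Concretely, I would decompose $\mF' = \mF'' + \mL$ as in Lemma \ref{lem:linear} and first rule out the degenerate case $\mF'' = 0$: were $\mF' = \mL$, estimate \eqref{eq:L} would force $(\mF')^2 \geq 9\deg(\mF') > 0$, contradicting \eqref{eqq}. Next I would verify that $\epsilon := (\mF'')^2/\deg(\mF'') < -11$. Using Assumption \ref{ass} we have $\deg(\mF') \leq 27$ and $(\mF')^2 \leq -315$; combining the inequality $(\mF'')^2 \leq (\mF')^2 - 9\deg(\mL)$ from \eqref{eq:L} with $\deg(\mF'') = \deg(\mF') - \deg(\mL)$ and doing the routine arithmetic gives
$$
(\mF'')^2 + 11 \deg(\mF'') \;\leq\; (\mF')^2 + 11\deg(\mF') - 20\deg(\mL) \;\leq\; -18 - 20\deg(\mL) \;<\; 0,
$$
so in particular $(\mF'')^2 < 0$ and $\epsilon < -11$.

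Finally, Lemma \ref{lem:multi} applied to $D = \mF''$ produces an irreducible component $C$ of $\mF''$ with multiplicity $N$ satisfying $N \cdot C^2 \leq \epsilon \deg(C) < -11\deg(C)$. By construction $C$ is non-linear, and since $\mF' = \mF'' + \mL$ with $\mL$ supported on lines, $C$ appears with the same multiplicity $N$ in $\mF'$. There is no real obstacle beyond bookkeeping; the substantive point is that the bound \eqref{eqq} remains strong enough that the threshold $-11$ survives the loss incurred by discarding the line components.
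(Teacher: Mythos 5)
Your proposal is correct and follows essentially the same route as the paper: strip off the linear part via Lemma \ref{lem:linear}, observe that $(\mF'')^2/\deg(\mF'')<-35/3<-11$ since $\deg(\mF'')\leq 27$ and $(\mF'')^2\leq(\mF')^2\leq -315$, and apply Lemma \ref{lem:multi} to $\mF''$. The paper compresses this into ``apply Lemma \ref{lem:multi} to $\mF''$ and simplify''; your version merely writes out that arithmetic (and the exclusion of $\mF''=0$) explicitly.
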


\begin{proof}
By Lemma \ref{lem:linear} and its proof, it suffices to consider the divisor $\mF''$ 
without linear components of degree at most $27$.
Then apply Lemma \ref{lem:multi} to $\mF''$ and simplify.
\end{proof}

\begin{rem}
In special situations, this argument can be strengthened using \eqref{eq:L}, \eqref{eq:LL}.
Also we could insert any constant $\epsilon<35/3$ instead of $11$ in \eqref{eq:multi2},
but presently this would not have any impact on our arguments.
\end{rem}

\begin{lemm}
\label{lem:conic}
The support of $\mF'$ contains an irreducible conic of multiplicity $N\geq 6$
or a twisted cubic curve of multiplicity $N\geq 7$.
\end{lemm}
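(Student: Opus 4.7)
The starting point is to apply Lemma~\ref{lem:N} to our divisor $\mF'$, which produces a non-linear irreducible component $C$ of $\mF'$ with multiplicity $N$ satisfying $N\cdot C^2 < -11\deg(C)$. Setting $d=\deg(C)$ and invoking adjunction on $X_5$ (where $K_{X_5}=H|_{X_5}$, so $C^2 = 2p_a(C)-2-d$), I would rewrite the Lemma~\ref{lem:N} inequality as
\[
N\bigl(2+d-2p_a(C)\bigr) \;>\; 11\,d.
\]
Paired with the elementary bound $Nd\leq \deg(\mF')\leq 27$ coming from \eqref{mF'}, these two constraints essentially pin down the possibilities for $(d,N,p_a)$.

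The first step I would carry out is to exclude $d\geq 4$. Using $p_a(C)\geq 0$ the displayed inequality specializes to $N > 11d/(d+2)$, an increasing function of $d$. For $d\geq 4$ this already forces $N\geq 8$, which plainly contradicts $N\leq 27/d\leq 6$. Thus only $d\in\{2,3\}$ need to be considered.

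Next I would analyse the two remaining cases. For $d=3$ there are exactly two types of irreducible cubic curves in $\PP^3$: plane cubics, with $p_a=1$ and hence $C^2=-3$, and twisted cubics, smooth rational with $p_a=0$ and hence $C^2=-5$. A plane cubic would require $N>11$, incompatible with $N\leq 9$, and is therefore ruled out. A twisted cubic requires $N>33/5$, i.e.\ $N\geq 7$, which is exactly the claimed bound. For $d=2$ an irreducible conic is automatically smooth ($p_a=0$, $C^2=-4$), and the inequality becomes $N>11/2$, i.e.\ $N\geq 6$, again as claimed.

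The only genuinely nontrivial step is the $d=3$ dichotomy: plane cubics and twisted cubics have the same degree but different arithmetic genera, and it is precisely this distinction that pushes the bound in the cubic case up to $N\geq 7$ and excludes the plane-cubic alternative from the statement of the lemma. Everything else is a mechanical combination of adjunction with the degree bound on $\mF'$.
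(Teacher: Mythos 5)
Your proof is correct and follows essentially the same route as the paper: apply Lemma~\ref{lem:N} to extract a non-linear component satisfying \eqref{eq:multi2}, combine with adjunction (the paper's inequality \eqref{eq:C^2} is exactly your $p_a\geq 0$ bound) and the degree bound $N\deg(C)\leq\deg(\mF')\leq 27$, and then rule out plane cubics and all components of degree at least $4$. The only cosmetic difference is that you treat $d\geq 4$ via the uniform inequality $N>11d/(d+2)$ rather than citing \eqref{eq:C^2} directly.
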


\begin{proof}
Obviously the two given cases satisfy the inequality \eqref{eq:multi2} from Lemma \ref{lem:N}.
In comparison, the only remaining case in degree at most $3$,
a plane cubic curve $C\subset X_5$, has $C^2=-3$ by adjunction,
so \eqref{eq:multi2} would give multiplicity at least $12$, exceeding the degree of $\mF'$.

For $\deg(C)\geq 4$, the inequality \eqref{eq:C^2} eliminates 
all cases in combination with \eqref{eq:multi2}.
\end{proof}

\begin{lemm}
\label{lem:9}
Any conic has multiplicity at most $9$ in $\mF'$.
\end{lemm}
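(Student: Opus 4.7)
The plan is to suppose for contradiction that some irreducible conic $C\subset X_5$ appears in $\mF'$ with multiplicity $N\ge 10$, and derive a contradiction by confronting a lower and an upper bound on the intersection number $C.\mF''$, where we write $\mF'=NC+\mF''$ with $\mF''$ effective and not containing $C$. Starting from $C.\mF=2\cdot 31=62$ and the estimate $C.(\sum_i\ell_i)\le 88$ from Remark~\ref{rem88}, one obtains $C.\mF'\ge -26$. Combined with $C^2=-4$ (by adjunction on the smooth quintic), this gives the lower bound
\[
C.\mF''=C.\mF'+4N\ge 4N-26\ge 14.
\]

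For the matching upper bound, I would exploit the unique plane $\Pi$ through $C$. The hyperplane section $\Pi\cap X_5=kC+L$ must have $k=1$: the alternative $k=2$ would force a single line $L$ with $L^2=(H-2C)^2=-19$, incompatible with the adjunction value $L^2=-3$. Hence $\deg L=3$ and $C\sim H-L$ on $X_5$. Splitting $\mF''=M'+E$, where $M'$ is supported on components of $L$ and $E$ contains no component of $\Pi$, a short case-by-case check on the possible splittings of $L$ (irreducible cubic, line plus irreducible conic, three distinct lines; the non-reduced candidate $2L_1+L_2$ is excluded because it forces $C.L_2=-2<0$) shows $C.L_i=2\deg L_i$ for every component of $L$, while $C.E_j=\deg E_j-L.E_j\le\deg E_j$ for each component of $E$. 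Therefore
\[
C.\mF''\le 2\deg M'+\deg E\le 2\deg\mF''\le 2(27-2N),
\]
using $\deg\mF'\le 155-M\le 27$ from Assumption~\ref{ass}. Comparing with the lower bound forces $N\le 10$.

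To exclude the borderline case $N=10$ and conclude $N\le 9$, I would observe that equality throughout pins down $M=128$, $\deg\mF''=7$, and $\mF''=M'$ entirely supported on the components of $L$. Since $3\nmid 7$, the cubic $L$ must be reducible. Using the adjunction values $L_i^2\in\{-3,-4\}$ together with the pairwise intersections $L_i.L_j$ (determined from $L^2=-3$: one computes $L_1.L_2=2$ in the line-plus-conic case, and $L_i.L_j=1$ in the three-lines case), an elementary quadratic optimisation in the multiplicities $k_i$ yields $(M')^2\ge -147$, the minimum being attained when $M'$ equals seven times a single line component of $L$. Substituting $C.M'=14$ then gives
\[
(\mF')^2=100\,C^2+20\,C.M'+(M')^2\ge -267,
\]
contradicting $(\mF')^2\le -315$ from \eqref{eqq}. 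The main obstacle of the plan is this final case-by-case optimisation; the rest of the argument is essentially intersection-theoretic book-keeping, and the $k=2$ elimination and the check $C.L_i=2\deg L_i$ both reduce to one-line adjunction computations.
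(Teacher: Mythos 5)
Your argument is correct, but it takes a genuinely different and much longer route than the paper's. The paper's proof is two lines: by Corollary~\ref{cor88} the conic $Q$ meets at most $88$ of the $M\geq 128$ lines, so some line $\ell$ is disjoint from $Q$; for such a line, $N\geq 10$ would give $\ell.\mF' = \ell.(\mF'-NQ)\leq \deg(\mF'-NQ)\leq 27-2N\leq 7$, contradicting $\ell.\mF'\geq 9$ from \eqref{eq9}. You instead squeeze $Q.\mF''$ between a lower bound coming from Remark~\ref{rem88} (namely $Q.\mF'\geq 62-88=-26$, hence $Q.\mF''\geq 4N-26$) and an upper bound $2\deg\mF''$ obtained from the residual plane cubic $L$ in the plane of $Q$, via $Q\sim H-L$ and the adjunction bookkeeping $Q.L_i=2\deg L_i$, $Q.E_j\leq\deg E_j$; this only yields $N\leq 10$, so you must then kill the borderline case $N=10$ by pinning down $M=128$, $\mF''=M'$ of degree $7$ supported on $L$, minimising $(M')^2\geq -147$, and contradicting \eqref{eqq}. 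I checked the individual computations (the exclusion of $\Pi\cap X_5=2Q+L$ via $L^2=-19$, the exclusion of $L=2L_1+L_2$ via $Q.L_2=-2$, the values $L_1.Q'=2$ and $L_i.L_j=1$, and the final estimate $(\mF')^2\geq -400+280-147=-267>-315$) and they are all right; you should just add that $L=3L_1$ is also excluded, since it would give $L^2=-27\neq -3$. What your approach buys is that it never needs a line disjoint from $Q$ and is closer in spirit to the plane/linear-equivalence arguments the paper uses for the twisted cubic in Section~\ref{s:C}; what it costs is the entire borderline analysis, which the paper's choice of test curve (a line off $Q$ rather than $Q$ itself) avoids completely.
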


\begin{proof}
For degree reasons, the multiplicity $N$ of a conic $Q$ in $\mF'$ is at most $13$.
Recall from Corollary \ref{cor88} that the valency of $Q$ satisfies
\[
v(Q) \leq 88,
\]
so there certainly is a line $\ell$ off $Q$.
But then $N\geq 10$ would imply
\[
\ell.\mF' = \ell.(\mF'-NQ) \leq \deg(\mF'-NQ) \leq 27-2N\leq 7,
\]
contradicting \eqref{eq9}.
\end{proof}

Before continuing, we record the following useful consequence:

\begin{lemm}
\label{lem:4l}
Any line has multiplicity at most $3$ in $\mF'$.
\end{lemm}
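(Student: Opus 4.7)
The plan is to argue by contradiction: suppose a line $\ell \subset X_5$ has multiplicity $N \geq 4$ in $\mF'$. The first step is to narrow this to $N=4$ by a direct degree argument. Write $\mF' = N\ell + \mathcal{G}$ with $\mathcal{G}$ effective and not containing $\ell$; then $\ell.\mathcal{G} \leq \deg(\mathcal{G}) \leq 27-N$ (using Assumption~\ref{ass}), and combining with $\ell^2 = -3$ and the lower bound $\ell.\mF' \geq 9$ from \eqref{eq9} yields $9 + 3N \leq 27 - N$, forcing $N = 4$.

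The key idea is then to push $\mathcal{G}^2$ very negative via the global bound \eqref{eqq}. Since $\ell.\mathcal{G} = \ell.\mF' + 12 = 46 - v(\ell)$, a direct computation gives
\[
(\mF')^2 = 16\ell^2 + 8\,\ell.\mathcal{G} + \mathcal{G}^2 = 320 - 8v(\ell) + \mathcal{G}^2,
\]
and combining with $(\mF')^2 \leq -315$ from \eqref{eqq} and $v(\ell) \leq 25$ from Lemma~\ref{lem:25} yields $\mathcal{G}^2 \leq -435$, while $\deg(\mathcal{G}) \leq 23$.

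I would conclude by applying Lemma~\ref{lem:multi} to $\mathcal{G}$. Since $\epsilon := \mathcal{G}^2/\deg(\mathcal{G}) \leq -435/23 < -18$, the lemma yields an irreducible component $D \subset \mathcal{G}$ of multiplicity $m$ with $m D^2 \leq \epsilon \deg(D)$. The adjunction estimate \eqref{eq:C^2} rearranges this to $m(2 + \deg(D)) > 18 \deg(D)$, and a case analysis on $\deg(D)$ produces the contradiction: for $\deg(D) = 1$ one gets $m \geq 7$, clashing with the general bound $m \leq 4$ for the multiplicity of any line in $\mF'$ (the first step applied to $D$); for $\deg(D) = 2$ one gets $m \geq 10$, contradicting Lemma~\ref{lem:9}; and for $\deg(D) \geq 3$ one gets $m \geq 11$, incompatible with $m \cdot \deg(D) \leq \deg(\mathcal{G}) \leq 23$. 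I expect the main obstacle to be arranging enough negativity for $\mathcal{G}^2$ in the tight case $N=4$; the trick is to leverage \eqref{eqq} and $v(\ell) \leq 25$ to bypass any need to locate the conic or twisted cubic of Lemma~\ref{lem:conic} explicitly inside $\mathcal{G}$, letting Lemma~\ref{lem:multi} produce the offending high-multiplicity component automatically.
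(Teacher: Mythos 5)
Your argument is correct, and it follows the paper's overall strategy: the same degree computation ($9\leq \ell.\mF'\leq 27-4N$) rules out $N\geq 5$, and for $N=4$ both proofs feed the residual divisor into Lemma~\ref{lem:multi}. The difference lies in the endgame, and yours is genuinely cleaner. The paper only establishes $\mR^2\leq -339$ for $\mR=\mF'-4\ell$, so Lemma~\ref{lem:multi} (with $\epsilon\approx -14.7$) still leaves open the possibility of a conic of multiplicity $8$ or $9$; to kill that case the paper invokes Corollary~\ref{cor88} and Lemma~\ref{lem:25} to produce a line $\ell'$ meeting neither $\ell$ nor the conic $Q$, and then contradicts \eqref{eq9} via $\ell'.\mF'\leq\deg(\mF'-4\ell-mQ)\leq 7$. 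You instead exploit the exact identity $\ell.\mathcal{G}=\ell.\mF'+12=46-v(\ell)\geq 21$ coming from \eqref{eq9} and $v(\ell)\leq 25$, which pushes the residual self-intersection down to $\mathcal{G}^2\leq -435$ and hence $\epsilon<-18$; at that threshold the conic case already demands multiplicity $\geq 10$ and dies against Lemma~\ref{lem:9} directly, lines need multiplicity $\geq 7$ against the uniform bound $\leq 4$ from your first step, and degree $\geq 3$ components are excluded for degree reasons exactly as in the paper. All the numerics check out ($3m>18$, $4m>36$, $5m>54$ against $3m\leq 23$, and $18d/(2+d)$ increasing in $d$), so your quantitative sharpening buys a shorter case analysis at no cost, whereas the paper trades a softer estimate on $\mR^2$ for one extra geometric step.
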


\begin{proof}
Let $\ell\subset X_5$ be of multiplicity $N$ in $\mF'$. 
First  assume that $N\geq 5$. Then use \eqref{eq9} and compute
\[
9\leq \ell.\mF' = -3N + \ell.(\mF'-N\ell) \leq -3N+\deg(\mF'-N\ell) \leq 27-4N.
\]
The claim follows immediately. It remains to discuss the case $N=4$.
Write $\mF'=4\ell+\mR$
with 
\[
\deg(\mR)\leq 23 \;\;\; \text{ and } \;\;\; \mR^2\leq -315+48-72 = -339
\]
by \eqref{eq:LL}. Then Lemma \ref{lem:multi} implies
that either $\mR$ contains a line of multiplicity at least $5$
(contradicting the above),
or a conic of multiplicity $m\geq 8$.
Hence we obtain
\[
\mF' = 4\ell + mQ + \mR', \;\;\; \deg(\mR')\leq 23-2m \leq 7.
\]
By Corollary \ref{cor88} and  Lemma \ref{lem:25},
there is a line $\ell'\subset X_5$ which intersects neither $\ell$ nor $Q$.
Hence $\ell'.\mF'\leq \deg(\mR') \leq 7$,
contradicting \eqref{eq9}.
This completes the proof of Lemma \ref{lem:4l}.
\end{proof}

%
%

\begin{rem}
With a little more work,
one can improve Lemma \ref{lem:4l}
to show that any line has multiplicity at most $2$ in $\mF'$,
but we will not need this in what follows.
\end{rem}

\section{Proof of Theorem \ref{thm2}}
\label{s:pf}

We are now in the position to attack the proof of Theorem \ref{thm2}.
To this end, we have to rule out all possible configurations with an $N$-fold irreducible conic $Q$ supported on $\mF'$
($N=6,\hdots,9$), or with an $N$-fold twisted cubic $C$ ($N=7,8,9$).
We proceed by a case-by-case analysis of the residual effective divisor 
\[
\mR=\mF'-NQ, \deg(\mR)\leq 27-2N, \;\;
\text{ resp. } \;\; \mR=\mF'-NC, \deg(\mR)\leq 27-3N.
\]

\subsection{Multiple conic case}
\label{s:Q}

Following Lemma \ref{lem:conic} and Lemma \ref{lem:9},
we start by assuming that $\mF$ contains an irreducible conic $Q$ 
of multiplicity $N=6,\hdots,9$ (and no conic of higher multiplicity).
We proceed by a case-by-case analysis.

\subsubsection{$\boldsymbol{N=9}$}
\label{ss:9Q}

Recall from Remark \ref{rem88} that
\[
Q.\sum_{i=1}^M \ell_i \leq 88.
\]
Compared with $Q.\mF=62$, this yields 
\[
Q.\mR\geq 10
\;\; \text{ and } \;\;
(\mF')^2 \geq -324+180+\mR^2 = -144 +\mR^2.
\]
Hence \eqref{eqq} implies $\mR^2\leq -171$.
One might like to continue disregarding the linear components
as in Lemma \ref{lem:N}, but here the positive contribution from linear components
might already have been captured in $Q.\mR\geq 10$, so we have to content ourselves with applying Lemma \ref{lem:multi}
to $\mR$.
It follows that there is a seven-fold line $\ell$,
but this contradicts Lemma \ref{lem:4l}.

\subsubsection{$\boldsymbol{N=8}$}
\label{ss:8Q}

We deduce as in \ref{ss:9Q} that $Q.\mR\geq 6$.
Hence $\mR^2\leq -155$,
and Lemma \ref{lem:multi} shows as before that 
$\mR$ contains a five-fold line 
which is ruled out by Lemma \ref{lem:4l}.


\subsubsection{$\boldsymbol{N=7}$}
\label{ss:7Q}

Consider the residual divisor $\mR$ with $\deg(\mR)\leq 13$.
As before, we infer $Q.\mR\geq 2$ and $\mR^2\leq -147$.
It follows from  Lemma \ref{lem:multi} that $\mR$ contains either a $4$-fold line $\ell$ 
(which is ruled out by Lemma \ref{lem:4l}) 
or a $6$-fold conic $Q'\neq Q$.
In the latter case, either $M=129$ and the bound \eqref{eqq} improves to
\[
(\mF')^2 \leq -355
\]
which is impossible to attain,
or $\mF'=7Q+6Q'+\ell$ for some line $\ell$ whose positive contribution
by Remark \ref{rem:linear} cannot be compensated for.

\subsubsection{$\boldsymbol{N=6}$}

The  residual divisor $\mR$ has degree $\deg(\mR)\leq 15$ and $\mR^2\leq -171$.
As in \ref{ss:7Q},
Lemma \ref{lem:multi} implies that $\mR$
contains 
a line of multiplicity $4$ 
(which again is excluded by Lemma \ref{lem:4l})
or another conic $Q'\neq Q$
of multiplicity $m\geq 6$.
In the latter case,
since $m=7$ has already been covered in \ref{ss:7Q},
it remains to realize that for $m=6$, 
Lemma \ref{lem:multi} applied to the residual divisor again leads to 
3-fold line and subsequently to a contradiction
by the positive contribution from \eqref{eq:L}.

\subsection{Multiple twisted cubic case}
\label{s:C}

Continuing the present line of arguments based on Lemma \ref{lem:conic},
we distinguish three cases depending on the multiplicity $N=7,8,9$ of the twisted cubic $C$ in $\mF'$.

\subsubsection{$\boldsymbol{N=7}$}
\label{ss:7C}

Here $\mR^2\leq -70$ and $\deg(\mR)\leq 6$, 
so Lemma \ref{lem:multi} 
implies that $\mR$ contains a 4-fold line.
Hence Lemma \ref{lem:4l} gives the required contradiction.

\subsubsection{$\boldsymbol{N=8}$}
\label{ss:8C}

In this case, $\deg(\mR)\leq 3$ with no obvious further restrictions.
We proceed by distinguishing how $\mR$ decomposes.

If $\mR$ contains a line,
then its positive contribution following Remark \ref{rem:linear} gives a contradiction.

If $\mR$ is an irreducible conic, then $M=129$, 
and the improved bound $\mF'^2 \leq -355$ from \eqref{eqq} cannot be reached.
Similarly, of course, if $\mR=0$.

Hence $\mR$ is an irreducible cubic (plane or twisted)
which thus meets no more than 98 lines (by looking at $\mR.\mF$).
All the remaining lines (at least 30 in number)
have to meet $C$ with multiplicity two for \eqref{eq9} to hold.
But then their valency drops by 7 to $v(\ell)=18$.
As this occurs for at least 30 lines, we get a big correction term of $-210$ to \eqref{eqq}
which is impossible to beat.

\subsubsection{$\boldsymbol{N=9}$}

We have $\mF'=9C$ and $M=128$.
Then \eqref{eq9} implies that any line $\ell\subset X_5$ meets $C$,
and for $C.\mF=93$ to hold,
$C$ has to intersect exactly 10 lines with multiplicity two, say $\ell_1\hdots,\ell_{10}$.
We turn to the effective divisor $D$ from \ref{ss:tcc}.
By Proposition \ref{prop:double}, there is a decomposition
\[
D = \sum_{i=1}^M \ell_i + \sum_{j=1}^{10} \ell_j + mC + D'.
\]
For degree reasons, this implies $m\leq 9$.
In comparison,
the intersection product reads
\[
99 = C.D = 158 - 5m + C.D'
\]
whence $m\geq 12$, contradiction.

\subsection{Proof of Theorem \ref{thm2}}

To conclude, let us wrap up the proof of Theorem \ref{thm2}.
Assuming that $M>127$, we deduced in Lemma \ref{lem:conic}
that $\mF$ contains a conic with multiplicity $N\geq 6$,
or a twisted cubic with multiplicity $N\geq 7$.
On the other hand, $N\leq 9$ by Lemma \ref{lem:9} resp.~for degree reasons.
Then the considerations in Sections \ref{s:Q}, \ref{s:C} successively ruled out all configurations
which might have fitted \eqref{eqq},
thus completing the proof of Theorem \ref{thm2}.
\qed

\begin{rem}
By inspection, the results of the preceding four sections are totally geometric,
i.e.~they essentially only require that \eqref{eq:fibration-deg-d} has a smooth fiber,
and that the flecnodal divisor does not degenerate.
\end{rem}

\end{document}